\newcommand{\IN}{\mathbb{N}}
\newcommand{\R}{\mathbb{R}}
\newcommand{\C}{\mathbb{C}}
\newcommand{\cC}{\mathcal{C}}
\newcommand{\cI}{\mathcal{I}}
\newcommand{\cG}{\mathcal{G}}
\newcommand{\cN}{\,\text{\Fontlukas{N}}}
\newcommand{\cB}{\mathcal{B}}
\newcommand{\cT}{\mathcal{T}}
\newcommand{\cM}{{\text{\Fontlukas{M}\,}}}
\newcommand{\Lip}{\text{\rm L}}
\renewcommand{\P}{\mathrm{P}}
\newcommand{\E}{\mathrm{E}}
\newcommand{\F}{\mathcal{F}}
\renewcommand{\L}{\mathbb{L}}
\newcommand{\1}{\mathbbm{1}}
\renewcommand{\d}{{\rm d}}
\newcommand{\e}{{\rm e}}
\renewcommand{\geq}{\geqslant}
\renewcommand{\leq}{\leqslant}
\renewcommand{\ge}{\geqslant}
\renewcommand{\le}{\leqslant}
\author{Robert C. Dalang\thanks{The research of R.C.D.~is supported in part by the Swiss National Foundation for Scientific Research.}\\EPF--Lausanne\and Davar Khoshnevisan\thanks{%
	Research of D.K. was supported in part by  the United States
	National Science Foundation grants DMS-1006903,
	DMS-1307470,  DMS-1608575, and PHY11-25915
	through an NSF grant to the Kavli Institute for Theoretical Physics
	at UC Santa Barbara. Parts of this research were carried out while the
	authors were at the Banff International Research Station in April 2012.}
	\\University of Utah\and Tusheng Zhang\\University of Manchester}
\title{\bf\Large{Global solutions to stochastic reaction-diffusion equations
	with super-linear drift and multiplicative noise}}
\date{}
\newtheorem{stat}{Statement}[section]
\newtheorem{proposition}[stat]{Proposition}
\newtheorem{theorem}[stat]{Theorem}
\newtheorem{lemma}[stat]{Lemma}
\theoremstyle{definition}
\newtheorem{definition}[stat]{Definition}
\newtheorem{remark}[stat]{Remark}
\numberwithin{equation}{section}
\begin{document}
\maketitle
\begin{abstract}
	Let $\xi(t\,,x)$ denote space-time white noise
	and consider a reaction-diffusion equation of the form
	\[
		\dot{u}(t\,,x)=\tfrac12 u''(t\,,x) + b(u(t\,,x)) + \sigma(u(t\,,x)) \xi(t\,,x),
	\]
	on $\R_+\times[0\,,1]$,
	with homogeneous Dirichlet boundary conditions and suitable initial data, 
	in the case that there exists $\varepsilon>0$
	such that $\vert b(z)\vert \ge|z|(\log|z|)^{1+\varepsilon}$
	for all sufficiently-large values of $|z|$. 
	When $\sigma\equiv 0$, it is well known that such PDEs
	frequently have non-trivial stationary solutions. By contrast,
	Bonder and Groisman \cite{BonderGroisman} have recently shown that
	there is finite-time blowup when 
	$\sigma$ is a non-zero constant. In this paper,
	we prove that the Bonder--Groisman condition is unimproveable
	by showing that the reaction-diffusion equation with noise is ``typically''
	well posed when $\vert b(z) \vert =O(|z|\log_+|z|)$ as $|z|\to\infty$.
	We interpret the word ``typically''
	in two essentially-different ways without altering the conclusions of
	our assertions.
\end{abstract}
\vskip.5in 

\noindent{\it \noindent MSC 2010 subject classification:}
Primary 60H15, 35K57; Secondary: 35R60, 35B45, 35B33.\\

	\noindent{\it Keywords:}
	Stochastic partial differential equations, 
	reaction--diffusion equations, blow-up, logarithmic Sobolev inequality.\\

\noindent{\it Abbreviated title:} SPDEs with super-linear drift

\section{Introduction}

Let $\xi$ denote space-time white noise on
$\R_+\times[0\,,1]$, and consider the
parabolic stochastic partial differential equation
\begin{equation}\label{SHE}
	\dot{u}(t\,,x) = \tfrac12 u''(t\,,x)
	+ b(u(t\,,x)) + \sigma(u(t\,,x)) \xi(t\,,x),
\end{equation}
$t>0$, $x \in (0\,,1)$, subject to the homogeneous Dirichlet boundary condition,
\[
	u(t\,,0)=u(t\,,1)=0\qquad\text{for all $t>0$},
\]
and the initial condition $u(0\,,\cdot) = u_0$ on $[0\,,1]$.
Throughout, $\sigma$, $b$, and $u_0$ are assumed to be nonrandom and
measurable real-valued functions on the real line.

It is  well known that if, in addition, $b$, $\sigma$ are globally Lipschitz
functions then any local solution of \eqref{SHE} is necessarily
a global one.  Note that the Lipshitz continuity of $\sigma$ and $b$
implies their sublinear growth; that is,
$|b(z)|+ |\sigma(z)|=O(|z|)$ as $|z|\to\infty$.
In 2009, Bonder and Groisman \cite{BonderGroisman}
proved the following interesting complement.

\begin{theorem}[Bonder and Groisman \protect{\cite{BonderGroisman}}]\label{th:BG}
	Suppose, in addition, that $\sigma$ is a nonzero constant,
	$b\ge0$ is convex 
	and satisfies 
	$\int_1^\infty \d z/b(z)<\infty$, 
	and the initial function $u_0$ is nonnegative,
	continuous on $[0\,,1]$, and vanishes on
	$\{0\,,1\}$. Then there exists a random time $\tau$ such that
	$\P\{\tau<\infty\}=1$ and
	\[
		\lim_{t \uparrow \tau}\int_0^1|u(t\,,x)|^2\,\d x= \infty
		\qquad\text{almost surely}.
	\]
\end{theorem}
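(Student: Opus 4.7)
The plan is to project $u(t,\cdot)$ onto the first Dirichlet eigenfunction $\phi(x):=\sqrt{2}\sin(\pi x)$ of $\tfrac12\partial_{xx}$ on $[0,1]$, thereby reducing the SPDE blow-up question to the explosion of a one-dimensional SDE with additive noise. Writing $\lambda:=\pi^2/2$, $c:=\int_0^1\phi=2\sqrt{2}/\pi$, and $X(t):=\int_0^1 u(t,x)\phi(x)\,\d x$, and testing the weak form of \eqref{SHE} against $\phi$, one obtains
\[
   X(t)=X(0)-\lambda\int_0^t X(s)\,\d s+\int_0^t\!\!\int_0^1 b(u(s,x))\phi(x)\,\d x\,\d s+\sigma W(t),
\]
where $W(t):=\int_0^t\!\int_0^1\phi(x)\,\xi(\d s,\d x)$ is a standard Brownian motion (since $\|\phi\|_{L^2[0,1]}=1$). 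Applying Jensen's inequality with the probability measure $c^{-1}\phi(x)\,\d x$ and using convexity of $b$ yields the pathwise lower bound $\int_0^1 b(u(s,x))\phi(x)\,\d x\ge c\,b(X(s)/c)$.

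The hypothesis $\int^\infty\d z/b(z)<\infty$, combined with convexity and nonnegativity of $b$, forces $b(z)/z\to\infty$, so that for $X(s)$ above some threshold $A$ the full drift dominates $\tfrac12 c\,b(X(s)/c)$. Consequently, on excursions of $X$ above level $A$, the process pathwise dominates the diffusion $Y$ solving $\d Y=\tfrac12 c\,b(Y/c)\,\d t+\sigma\,\d W$, driven by the same $W$. To show that $Y$ explodes in finite time almost surely, I would argue pathwise via $Z(t):=Y(t)-\sigma W(t)$, which satisfies the random ODE $Z'(t)=\tfrac12 c\,b((Z(t)+\sigma W(t))/c)$. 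On the event $\Omega_{M,T}:=\{\sup_{s\le T}|W(s)|\le M\}$, which has strictly positive probability for every $T,M>0$, the inequality $Z'\ge\tfrac12 c\,b((Z-\sigma M)/c)$ together with Osgood's classical ODE blow-up criterion forces $Z(t)\to\infty$ within time $T$ whenever $T$ exceeds $2c^{-1}\int_{Z(0)}^\infty\d z/b((z-\sigma M)/c)<\infty$.

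A standard restart-and-iterate argument based on the strong Markov property and the recurrence of $X$ to level $A$ then upgrades this positive-probability estimate to almost-sure finite-time explosion of $X$. The required recurrence follows from the observation that $X$ stochastically dominates the Ornstein--Uhlenbeck process $\d\tilde X=-\lambda\tilde X\,\d t+\sigma\,\d W$ obtained by setting $b\equiv 0$: since the latter is positive recurrent, $X$ visits any prescribed level almost surely as long as it has not yet blown up. Finally, Cauchy--Schwarz gives $X(t)^2\le\int_0^1 u(t,x)^2\,\d x$, so explosion of $X$ at some a.s.\ finite $\tau$ implies the claimed $L^2$ blow-up at the same random time.

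The main obstacle, in my view, is not any single step but rather the rigorous bookkeeping: one must localize by stopping times to move cleanly between the small-$X$ regime (where the linear term $-\lambda X$ is not yet dominated) and the large-$X$ regime where Osgood applies; one must verify that Jensen's inequality remains valid even when $u$ is transiently pushed negative by the noise (which it does, since $b$ is convex on all of $\R$); and one must chain together the positive return-to-$A$ probability with the positive explosion-from-$A$ probability through the strong Markov property so as to conclude $\P\{\tau<\infty\}=1$.
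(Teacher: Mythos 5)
The paper does not prove Theorem~\ref{th:BG}; it cites Bonder--Groisman and, in the remark that follows, merely observes that their argument shows the projection $\Phi(t)=\int_0^1 u(t,x)\sin(\pi x)\,\d x$ explodes in finite time, after which Cauchy--Schwarz gives the stated $\L^2$ blow-up. Your proposal correctly reconstructs the full Bonder--Groisman argument behind that remark (project onto the first Dirichlet eigenfunction, apply Jensen with respect to the normalized eigenfunction measure, shift by $\sigma W$ and invoke Osgood's criterion on an event of positive probability, then upgrade to almost-sure explosion via the strong Markov property of $u$ and a comparison with a recurrent Ornstein--Uhlenbeck process), so it takes the same route the paper points to, just filled in.

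Two points worth spelling out when you carry out the ``bookkeeping.'' First, the pathwise comparison $X\ge Y$ during an excursion above a high level $A$ requires $b$ to be nondecreasing there; this is automatic because a convex nonnegative $b$ with $\int_1^\infty \d z/b(z)<\infty$ is eventually strictly increasing. Second, the restart argument must apply the strong Markov property to the \emph{field} $u$, not to the scalar process $X$ (which by itself is not Markov, as its drift depends on all of $u(s,\cdot)$); the uniform lower bound on the explosion probability from the level set $\{X=A\}$ holds because the Jensen bound and the Osgood estimate depend only on $X(T_0)=A$ and on the increments of $W$, not on the rest of the state of $u$.
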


\begin{remark}
	To be precise, Theorem 3.2 of Bonder and Groisman
	\cite{BonderGroisman} implies the weaker conclusion that
	$\lim_{t \uparrow \tau}\sup_{x\in[0,1]}|u(t\,,x)|=\infty$ a.s.
	However, their proof yields the stronger result that
	\[
		\Phi(t) := \int_0^1 u(t\,,x)\sin(\pi x)\,\d x
	\]
	explodes in
	finite time a.s.; see the discussion prior to the statement of Lemma 3.1
	in \cite{BonderGroisman}. This and the Cauchy--Schwarz inequality
	together yield Theorem \ref{th:BG}.
\end{remark}

Theorem \ref{th:BG} is surprising because, if we set $\sigma\equiv0$, then the resulting reaction--diffusion
equation \eqref{SHE} can have non-trivial
global stationary solutions \cite{BonderGroisman,GV,SGKM}.
Therefore, we see that the introduction of any amount of additive
space-time white noise to a reaction diffusion
equation removes the possibility of global well-posedness if the reaction term grows
faster than a constant multiple of $|z|(\log |z|)^{1+\varepsilon}$
for some $\varepsilon>0$ (say) as either
$z\to\infty$ or $z\to-\infty$.
	
	Several papers in the literature discuss stochastic pde's with locally Lipschitz coefficients that have polynomial growth and/or satisfy certain monotonicity conditions (see \cite{cerrai-1,DMP,LR}, for instance). The typical example of such a coefficient  is $b(u) = -u^3$, which has the effect of ``pulling the solution back toward the origin.'' This is quite different from the situation that we discuss in this paper, where $b(u)$ will typically ``push'' the solution towards $\pm\infty$.

The goal of this article is to prove that 
the Bonder--Groisman
theorem (Theorem \ref{th:BG})
is optimal. In fact, we introduce two rather different methods
that show that, under two different sets of natural conditions,
if $|b(z)|=O(|z|\log|z|)$, then \eqref{SHE} is globally well-posed.
With this aim in mind, let us first introduce some notation.

\begin{definition}\label{def_L2}
	Suppose that $u_0 \in L^2[0\,,1]$.  We say that
	\eqref{SHE} has an \emph{$\L^2_{\textit{loc}}$-solution}
	$u$ if there exists a stopping time $\tau$
	[with respect to the standard Brownian filtration
	generated by $\xi$; see {\it Step 2}, following \eqref{1:T}
	below] and an adapted random field $\{u(t\,,\cdot)\}_{t\in [0\,, \tau)}$
	such that, almost surely on the event $\{\tau > t\}$,
	\begin{align*}
		\int_0^1u(t\,,x)\phi(x)\,\d x&=\int_0^1u_0(x)\phi(x)\,\d x
			+\frac12\int_{(0,t)\times(0,1)} u(s\,,x)\phi''(x)\,\d s\,\d x\\
		&\qquad +\int_{(0,t)\times(0,1)} b(u(s\,,x))\phi(x)\,\d s\,\d x
			+\int_{(0,t)\times(0,1)}\sigma(u(s\,,x))\phi(x)\,\xi(\d s\,\d x),
	\end{align*}for every 
	non-random test function 
	$\phi\in C^2[0\,,1]$ that satisfies $\phi(0) = \phi(1) = 0$,
\end{definition}

Our first result can be stated as follows.

\begin{theorem}\label{th:L2}
	Suppose in addition that $u_0\in L^2[0\,,1]$, $\sigma:\R\to\R$
	is bounded,
	and $|b(z)|=O(|z|\log|z|)$ as $|z|\to\infty$.
	Then, every $\L^2_{\textit{loc}}$-solution $u$ of \eqref{SHE}
	is a long-time solution; that is,
	\[
		\sup_{t\in[0,\tau\wedge T]}\int_0^1|u(t\,,x)|^2\,\d x<\infty \qquad
		\text{a.s.\ for every $T\in(0\,,\infty)$}.
	\]
\end{theorem}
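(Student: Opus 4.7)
The plan is to apply Itô's formula to $t\mapsto\|u(t)\|_{L^2}^2$, use a one-dimensional logarithmic Sobolev-type inequality to absorb the super-linear drift into the dissipation of the Laplacian, and then linearise the resulting nonlinear Grönwall bound by passing to $\log(e+\|u\|^2)$. Fix $T>0$ and introduce the localising stopping times $\tau_n:=\inf\{t<\tau:\|u(t)\|_{L^2}^2\ge n\}$. It suffices to prove $\P\{\tau_n\le T\}\to 0$ as $n\to\infty$, since then $\tau_n\uparrow\tau\wedge T$ a.s.\ and the desired finite bound follows. Applying the infinite-dimensional Itô formula to $\|u(t)\|^2$ (justified by a Galerkin approximation along the Dirichlet eigenbasis $\{\sin(k\pi x)\}_{k\ge1}$, with passage to the limit before stopping) yields
\[
\|u(t\wedge\tau_n)\|^2 + \int_0^{t\wedge\tau_n}\!\|u_x(s)\|^2\,ds = \|u_0\|^2 + 2\!\int_0^{t\wedge\tau_n}\!\!\!\int_0^1\! u\,b(u)\,dy\,ds + \!\int_0^{t\wedge\tau_n}\!\!\!\int_0^1\!\sigma(u)^2\,dy\,ds + M_{t\wedge\tau_n},
\]
with $M_t=2\int_0^t\!\int_0^1 u\,\sigma(u)\,\xi(ds,dy)$ and $\langle M\rangle_t=4\int_0^t\!\int_0^1 u^2\sigma(u)^2\,dy\,ds$.

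The key analytic input is a 1D log-Sobolev-type bound: for every $\delta>0$ there is $C_\delta<\infty$ such that
\[
\int_0^1 f^2\log(e+f^2)\,dx \le \delta\int_0^1 (f')^2\,dx + C_\delta\bigl(1+\|f\|_{L^2}^2\log(e+\|f\|_{L^2}^2)\bigr),\qquad f\in H_0^1[0,1],
\]
which follows by combining the 1D Sobolev embedding $\|f\|_\infty\le\|f'\|_{L^2}$ with a Fenchel/Young splitting of the product $\|f\|^2\log(e+\|f'\|^2)$; this is the analytic manifestation of the log-Sobolev tool highlighted in the paper's keywords. Under the hypothesis $|b(z)|\le C|z|\log(e+|z|)$ implied by $|b(z)|=O(|z|\log|z|)$, this inequality produces $2\int ub(u)\,dy \le \tfrac12\|u_x\|^2 + K\bigl(1+\|u\|^2\log(e+\|u\|^2)\bigr)$, so that half of the Laplacian dissipation absorbs the drift contribution.

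Next, apply Itô's formula to $Y_t:=\log(e+\|u(t\wedge\tau_n)\|^2)$. Boundedness of $\sigma$ makes both $\int\sigma(u)^2\,dy$ and the Itô correction $\int u^2\sigma(u)^2\,dy/Y_t^2$ uniformly $O(1)$, and together with the previous paragraph one arrives at
\[
Y_t \le \log(e+\|u_0\|^2) + Kt + K\int_0^t Y_s\,ds + N_t,
\]
where $N$ is a continuous local martingale with $\langle N\rangle_t\le Kt$. A further localisation turns $N$ into a true martingale; taking expectations and applying the linear Grönwall lemma yields $\E[Y_{T\wedge\tau_n}]\le L(T)$ uniformly in $n$. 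Since $Y_{\tau_n}\ge\log(e+n)$ on $\{\tau_n\le T\}$, Chebyshev's inequality gives $\P\{\tau_n\le T\}\le L(T)/\log(e+n)\to 0$, as needed.

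The principal obstacle is technical rather than conceptual: one has to justify the energy identity rigorously under the weak-solution hypothesis of Definition~\ref{def_L2}, by establishing it first at the level of the Galerkin approximation, securing enough uniform integrability to pass to the limit, and preserving the resulting inequality under stopping at $\tau_n$; along the way one also needs $u(s,\cdot)$ to possess enough spatial regularity for the log-Sobolev inequality to apply for a.e.\ $s\le t\wedge\tau_n$. The analytic lemma behind the log-Sobolev bound is the secondary technical point. Once both are in place, the remainder is a standard nonlinear Grönwall argument, and it is precisely the $|z|\log|z|$ threshold on $b$ that is dictated by the form of the log-Sobolev inequality — in agreement with the unimproveability asserted in the abstract.
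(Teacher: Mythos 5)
There is a genuine gap, and it sits precisely where you describe the difficulty as ``technical rather than conceptual'': the energy identity you write down for $\|u(t)\|_{\L^2}^2$ does not hold for an SPDE driven by space--time white noise, and the Galerkin scheme you invoke cannot rescue it.

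Two linked problems. First, with space--time white noise the solution $u(t,\cdot)$ is almost surely only H\"older continuous of order $<1/2$ in the space variable (it has the spatial regularity of a Brownian bridge), so $u(t,\cdot)\notin H_0^1$ and your dissipation term $\int_0^{t\wedge\tau_n}\|u_x(s)\|_{\L^2}^2\,ds$ equals $+\infty$ a.s. Second, the It\^o correction in your display is incorrect: in the variational framework the correction for $\|u\|_{\L^2}^2$ is the squared Hilbert--Schmidt norm of the multiplication operator $h\mapsto\sigma(u(s,\cdot))h$ on $\L^2$, i.e.\ $\int_0^t\sum_k\int_0^1 e_k(y)^2\sigma(u(s,y))^2\,dy\,ds$ for an orthonormal basis $\{e_k\}$, and this diverges for any non-degenerate $\sigma$ — it is not $\int_0^t\int_0^1\sigma(u)^2\,dy\,ds$. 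So the left side of your identity is $+\infty$ while the right side, as written, is finite. At the Galerkin level nothing improves: the $n$-mode truncation has an It\^o correction of order $n\|\sigma\|_{\L^\infty}^2 t$, diverging in lockstep with the truncated dissipation, and in the limit you face $\infty-\infty$ that no amount of uniform integrability resolves. Consequently the subsequent It\^o formula for $Y_t=\log(e+\|u(t\wedge\tau_n)\|^2)$ is also ill-posed, because the finite-variation part of $\|u\|^2$ is not well defined.

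This is exactly the obstruction that the paper's proof is built to avoid, and it is a conceptual point, not a technicality. The paper decomposes $u=v_R+d_R$, where $v_R$ solves a driftless linear SPDE whose noise coefficient is $\sigma(u(\cdot\wedge\tau(R),\cdot))$, so that on the stopped time-interval the remainder $d_R=u-v_R$ solves the \emph{pathwise} reaction--diffusion equation $\dot d_R=\tfrac12 d_R''+b(v_R+d_R)$ with no stochastic forcing. The rough part of $u$ is isolated in $v_R$; the smooth part $d_R$ gains $H_0^1$-regularity via parabolic smoothing of the $\L^p$-valued source $b(v_R+d_R)$ (this is what the paper's estimate $D'(t)\in\L^2$ establishes), and its energy identity is a deterministic chain rule with no It\^o correction at all. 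Boundedness of $\sigma$ is then used not to tame an It\^o correction but to bound $\sup_{t\le T}\|v_R(t)\|_{\L^\infty}$ uniformly in $R$ via Kolmogorov's continuity theorem, which is what lets one estimate $\langle b(V+D),D\rangle$ in terms of $\|D\|_{\L^2\log\L}^2$ with constants independent of $R$. Once that decomposition is in place, your remaining steps — absorb the drift into half the dissipation by the log-Sobolev inequality, linearise the resulting nonlinear Gr\"onwall bound via a concave transform ($\log(e+\cdot)$ in your version, the Fang--Zhang $\Phi$ in the paper's), and conclude by Chebyshev — do line up with the paper's argument and are sound. The missing ingredient in your proposal is the splitting of $u$ into a rough stochastic convolution and a smooth deterministic remainder.
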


\begin{remark} 
	Suppose $\tau$ is a maximal time up to which the solution can be constructed;
	that is,
	\[
	   \sup_{t \in [0,\tau)} \Vert u(t) \Vert_{L^2[0,1]} = \infty\qquad \text{a.s.}
	\]
	Then Theorem \ref{th:L2} implies
	that  $\tau = \infty$ a.s. In this case, 
	$\sup_{t \in [0,T]} \Vert u(t) \Vert_{L^2[0,1]} < \infty$ a.s.\ for all $T >0$. 
	The question of the existence of an $\L^2_{\textit{loc}}$-solution of 
	\eqref{SHE} under the assumptions of Theorem \ref{th:L2} is open.
\end{remark}

Theorem \ref{th:L2} is an infinite-dimensional variation on
aspects of the theory of Fang and Zhang \cite{FangZhang} on
stochastic differential equations with superlinear coefficients.
We follow the Lyapunov function method of Fang and Zhang,
and overcome
the transition from finite to infinite dimensions by appealing to 
the sharp form of Gross'
logarithmic Sobolev inequality for normalized Lebesgue measure \cite{Gross}. We believe that
our technique might also have other uses in infinite-dimensional stochastic analysis.

Our second result is based on an $L^\infty$ method and, as such,
requires stronger regularity on the initial function $u_0$. In order
to introduce it, we first need some notation.

\begin{definition}
	For every $\alpha\in(0\,,1)$, we define $\C^\alpha_0$ to be the collection of
	all functions $f:[0\,,1]\to\R$ such that $f(0)=f(1)=0$ and
	\[
		\|f\|_{\C^\alpha_0}:=
		\sup_{0\le x<y\le 1}\frac{|f(y)-f(x)|}{|y-x|^\alpha} < \infty.
	\]
	The space $\C^1_0$ will denote the collection of
	all Lipschitz-continuous functions $f:[0\,,1]\to\R$ such that $f(0)=f(1)=0$.
	We sometimes write Lip$(f)$ in place of $\|f\|_{\C^1_0}$.
\end{definition}

In all cases, we see that $\C^\alpha_0$ is a Banach space endowed with the
norm $\|\cdot\|_{\C^\alpha_0}$.
Let us also recall the following definition (see Dalang \cite{Dalang1999}).

\begin{definition}\label{def1.7}
	A {\em random field solution} to \eqref{SHE} is a jointly measurable
	and adapted space-time process $u:=\{u(t\,,x)\}_{(t,x) \in \R_+ \times [0,1]}$
	such that, for all $(t\,,x) \in \R_+ \times [0\,,1]$,
	\begin{align*}
		u(t\,,x) & =  (\cG_tu_0)(x) + \int_{(0,t)\times (0,1)}
			G_{t-s}(x\,,y)  b(u(s\,,y))\, \d s\, \d y \\
		& \hskip 2in + \int_{(0,t)\times (0,1)} G_{t-s}(x\,,y)
			\sigma(u(s\,,y))\, \xi (\d s\, \d y),
	\end{align*}
	a.s.,
	where $\{\cG_t\}_{t\ge0}$ and $G$ are respectively the heat semigroup
	and heat kernel for the Dirichlet Laplacian, and are recalled in \eqref{G_t}
	and \eqref{G} below.
\end{definition}

\begin{remark} \label{rd_rem1.8}
	The stochastic integral in Definition \ref{def1.7} is not 
	always defined in the sense of Walsh \cite{Walsh}
	since the Walsh integral is defined provided  that
	for all $t>0$ and $x\in[0\,,1]$,
	\[
		\E\left[ \int_{(0,t)\times (0,1)} \left[G_{t-s}(x\,,y)
		\sigma(u(s\,,y))\right]^2\, \d s\, \d y \right] < \infty.
	\]
	Instead, we are using a localized version of the Walsh integral,
	for whose existence we  require only that
	for all $t>0$ and $x\in[0\,,1]$,
	\[
		\int_{(0,t)\times (0,1)} \left[G_{t-s}(x\,,y)
		\sigma(u(s\,,y))\right]^2\, \d s\, \d y  < \infty \qquad \text{a.s.}
	\]
\end{remark}



We are now in position to present our second complement to the Bonder--Groisman
theorem [Theorem \ref{th:BG}].

\begin{theorem}\label{th:C:alpha}
	Suppose that:
	\begin{compactenum}[i.]
	\item $u_0\in\cup_{0<\alpha\le1}\C^\alpha_0$;
	\item $b$ and $\sigma$ are locally Lipschitz functions such that
		\begin{equation}\label{rd1.2a}
			|b(z)|=O(|z|\log|z|)
			\quad\text{and}\quad
			\left\vert\sigma(z)\right\vert = o\left(|z| (\log |z| )^{1/4}\right),
		\end{equation}
		as $|z|\to\infty$.
	\end{compactenum}
	Then the SPDE \eqref{SHE} has a random field solution $u$ in
	$C(\R_+ \times [0,1])$, and this solution is unique. In particular, $u$ satisfies
	\begin{equation}\label{rd0.1}
		\sup_{t\in[0,T]}\sup_{x\in[0,1]}|u(t\,,x)|<\infty\qquad
		\text{ a.s.\ for all $T\in(0\,,\infty)$}.
	\end{equation}
\end{theorem}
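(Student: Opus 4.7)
I would proceed by truncation and localization.
For $N\ge 1$, let $b_N,\sigma_N:\R\to\R$ be globally Lipschitz functions
coinciding with $b,\sigma$ on $[-N,N]$. By Walsh~\cite{Walsh} and
Dalang~\cite{Dalang1999}, the SPDE~\eqref{SHE} with $b,\sigma$ replaced by
$b_N,\sigma_N$ admits a unique jointly continuous random field solution $u_N$
on $\R_+\times[0,1]$, continuity being inherited from $u_0\in\C^\alpha_0$ via
standard moment estimates together with Kolmogorov's continuity theorem.
Set $\tau_N:=\inf\{t\ge 0:\sup_{x\in[0,1]}|u_N(t,x)|\ge N\}$. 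Pathwise
uniqueness for the truncated equation yields $u_M=u_N$ on $[0,\tau_N]$
a.s.\ for $M\ge N$, so $\{\tau_N\}$ is nondecreasing; defining
$\tau_\infty:=\sup_N\tau_N$ and $u(t,x):=u_N(t,x)$ on $\{t<\tau_N\}$ produces a
continuous random field solution on $[0,\tau_\infty)\times[0,1]$, unique
in $C(\R_+\times[0,1])$ by the patching construction.

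It remains to prove $\tau_\infty=\infty$ a.s., equivalently
$\lim_{N\to\infty}\P\{\tau_N\le T\}=0$ for each $T>0$.
On $\{t\le\tau_N\}$ the truncation is inactive and the mild form reads
\[
u_N(t,x)=\cG_tu_0(x)+\int_0^t\!\!\int_0^1 G_{t-s}(x,y)\,b(u_N(s,y))\,\d s\,\d y+Z_N(t,x),
\]
where $Z_N(t,x):=\int_0^t\!\!\int_0^1 G_{t-s}(x,y)\sigma(u_N(s,y))\,\xi(\d s,\d y)$.
Using $\|\cG_tu_0\|_\infty\le\|u_0\|_\infty$ (maximum principle),
$\int_0^1 G_{t-s}(x,y)\,\d y\le 1$, and the hypothesis
$|b(z)|\le C|z|\log(\e+|z|)$, and setting
$A_N(t):=\sup_{s\le t\wedge\tau_N}\|u_N(s)\|_\infty$ and
$Z_N^{\ast}:=\sup_{s\le T\wedge\tau_N}\|Z_N(s)\|_\infty$, I obtain the
pathwise inequality
\[
A_N(t)\le\|u_0\|_\infty+Z_N^{\ast}+C\int_0^t A_N(s)\log(\e+A_N(s))\,\d s.
\]
Since $\int^\infty\d u/[u\log(\e+u)]=\infty$, Bihari's inequality yields the
pathwise bound $A_N(T)\le(\e+\|u_0\|_\infty+Z_N^{\ast})^{\exp(CT)}$, whence
$\{\tau_N\le T\}\subseteq\{Z_N^{\ast}\ge N^{\exp(-CT)}-c_0\}$.

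To control $Z_N^{\ast}$, I would invoke the factorization method of
Da~Prato--Kwapie{\'n}--Zabczyk with exponent $\alpha\in(0,1/4)$---the
upper limit $\alpha<1/4$ arising from the finiteness of
$\int_0^T(t-s)^{-2\alpha-1/2}\,\d s$, which is precisely the exponent
appearing in the hypothesis on $\sigma$---combined with
Burkholder--Davis--Gundy and Kolmogorov's continuity criterion.
Since $|\sigma(u_N(s,y))|\le\varepsilon(N) N(\log N)^{1/4}$ on $\{s\le\tau_N\}$
with $\varepsilon(N)\to 0$, one obtains
$(\E[(Z_N^{\ast})^p])^{1/p}\le C_{p,T}\,\varepsilon(N)\,N\,(\log N)^{1/4}$
for all sufficiently large $p$.

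The principal technical obstacle is to close these estimates: a direct
Markov bound is insufficient, since the Bihari exponent $\exp(-CT)$ is
strictly less than one, leaving a residual polynomial factor
$N^{1-\exp(-CT)}$ that $\varepsilon(N)(\log N)^{1/4}$ alone cannot defeat.
Resolving this likely requires either letting the moment order
$p=p(N)\to\infty$ and tracking carefully the $p$-dependence of the
factorization constants---so that the \emph{little-o} strengthening of the
hypothesis on $\sigma$ forces $[\varepsilon(N)(\log N)^{1/4}]^{p(N)}$ below
any polynomial---or replacing the crude Bihari estimate by a sharper a
priori bound, for instance by working directly with $\log(\e+A_N)$ as a
Lyapunov functional and absorbing the noise contribution via a
semimartingale decomposition calibrated to the $1/4$ threshold.
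This delicate balance between the logarithmic drift and the
$(\log)^{1/4}$ diffusion growth is what encodes the sharpness of the
Bonder--Groisman condition, and is, in my judgement, the step in which
the real work of the proof resides.
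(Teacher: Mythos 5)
Your truncation-and-localization setup is the correct starting point and matches the paper's Section 4. You also correctly identify where the real difficulty lies: a naive pathwise Bihari estimate, $A_N(T)\le(\e+\|u_0\|_\infty+Z_N^{\ast})^{\exp(CT)}$, leaves an exponent $\exp(-CT)<1$ on $N$, while the crude bound $\|Z_N^{\ast}\|_p\lesssim \varepsilon(N)N(\log N)^{1/4}$ has full power $N^1$; the resulting gap $N^{1-\exp(-CT)}$ cannot be closed by the $o(\cdot)$ improvement on $\sigma$ alone. You flag this honestly, but neither of the two fixes you gesture at is what the paper does, and the gap is real, not a technicality.

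The paper's resolution is structurally different and worth stating precisely. Rather than separating drift and noise and applying Bihari to the deterministic skeleton, the paper treats the truncated equation as a genuine globally-Lipschitz SPDE and runs the entire Picard/weighted-norm machinery (Propositions~\ref{I:pr:Lk}, \ref{I:pr:u(x)-u(x')}, \ref{I:pr:u(T)-u(t)}, culminating in Theorem~\ref{I:th:Lk}) while \emph{tracking the Lipschitz constants explicitly}. The decisive point is that for $\tilde b_N(z)=\vartheta_1+\vartheta_2(|z|\wedge N)\log_+(|z|\wedge N)$ one may take $\Lip(\tilde b_N)=\vartheta_2\log N$, so the exponential moment growth $\e^{Ak\Lip(\tilde b_N)t}=N^{Ak\vartheta_2 t}$ is merely a \emph{polynomial} in $N$ whose degree shrinks with $t$. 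Meanwhile the $o(|z|(\log|z|)^{1/4})$ hypothesis on $\sigma$ gives $\Lip(\sigma_N)=o((\log N)^{1/4})$, which guarantees $\sqrt{\Lip(\tilde b_N)}/\Lip(\sigma_N)^2\to\infty$, so the admissible moment range $k\in(\varpi\,,\sqrt{\Lip(\tilde b_N)}/\Lip(\sigma_N)^2]$ is eventually nonempty; this is exactly where the $1/4$ exponent is spent. Chebyshev then gives $\P\{\tau_N^{(1)}<\varepsilon\}\lesssim (\log N)^kN^{k(A\vartheta_2\varepsilon-1)}\to0$ for $\varepsilon$ small enough, producing a \emph{deterministic} positive time $\tau_0$ before which no blowup occurs. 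Two further ingredients you do not mention are then indispensable: the optimal regularity Theorem~\ref{th:OR}, which shows $U_N(\tau_0)\in\C_0^\alpha$ a.s.\ so the argument can be restarted, and the strong Markov property, which lets one iterate $\tau_0\to2\tau_0\to\cdots$ to conclude $\tau_\infty^{(1)}=\infty$; finally, general $b$ with $|b(z)|=O(|z|\log|z|)$ is handled by sandwiching between $\vartheta_1\pm\vartheta_2|z|\log_+|z|$ and invoking the comparison theorem of Donati-Martin and Pardoux. In short: the mechanism is not ``sharpen Bihari'' or ``send $p\to\infty$'' but ``make the exponential growth rate itself depend on $N$ through $\Lip(\tilde b_N)\asymp\log N$ so that it becomes sub-critical polynomial in $N$ over a short time window, then time-step.''
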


   Recall that \eqref{rd1.2a} means that there is a constant $C < \infty$ such that for $|z|$ large enough, $\vert b(z)\vert \leq C |z|\log|z|$, and $\lim_{|z| \to \infty } \vert \sigma(z)/(z (\log |z| )^{1/4}) \vert = 0$.
	
We conclude the introduction with a few words about the notation
that is used consistently in this paper.\\

\noindent\textbf{Remarks on notation.}
\begin{compactenum}
\item Throughout the paper, we write $\L^p$ in place of $L^p[0\,,1]$
	for every $1\le p\le\infty$. In particular, $\|f\|_{\L^\infty}$
	and $\|f\|_{\L^2}$ respectively denote the essential supremum
	and the $\L^2$-norm of a suitable function $f:[0\,,1]\to\R$.
\item Throughout, we define
	$\log_+(w) := \log(\max(w\,,\e))$ for all $w\in\R$.
\item If $f$ and $g$ are non-negative functions on some space $X$,
	then we write $f(x)\apprle g(x)$ for
	all $x\in X$  [equivalently, $g(x)\apprge f(x)$ for all $x\in X$]
	to mean that there exists a finite constant $A$ such that
	$f(x)\le Ag(x)$ for all $x\in X$.
\item If $f$ and $g$ are non-negative functions on some space $X$,
	then  we write $f(x)\asymp g(x)$ for all $x\in X$
	to mean that $f(x)\apprle g(x)\apprle f(x)$ for all $x\in X$.
\end{compactenum}

\section{Proof of Theorem \ref{th:L2}}

We will appeal to the logarithmic Sobolev inequality of
Gross \cite{Gross} in the following form:
For every $\varepsilon\in(0\,,1)$ and differentiable
functions $h:[0\,,1]\to\R$ that vanish continuously on $\{0\,,1\}$,
\[
	\int_0^1 |h(x)|^2\log|h(x)|\,\d x \le \varepsilon\|h'\|_{\L^2}^2
	+\tfrac14\log(1/\varepsilon) \|h\|_{\L^2}^2
	+\|h\|_{\L^2}^2\log\left(\|h\|_{\L^2}^2\right),
\]
where $0\log 0 :=0$.
One can derive this logarithmic Sobolev inequality  
from formula (5.4) in ref.\ \cite{Gross} using the fact that
\[
	\log\left( \|\cG_{\varepsilon}\|_{2\rightarrow\infty}\right)\leq
	\tfrac12\sup_{x\in[0,1]}\log\left(\int_0^1 \vert G_{\varepsilon}(x\,,y)\vert^2\,\d y
	\right) \leq \tfrac14\log(1/\varepsilon),
\]
where $\cG:=\{\cG_t\}_{t\ge0}$ denotes the
heat semigroup and $G:(0\,,\infty)\times[0\,,1]^2\to\R_+$
the corresponding heat kernel; see \eqref{G} below.

Let $\L^2\log\L$ denote the vector space of all measurable functions
$h:[0\,,1]\to\R$ that satisfy
\[
	\|h\|_{\L^2\log\L} :=\left(\int_0^1
	|h(x)|^2\log_+|h(x)|\,\d x\right)^{1/2}<\infty.
\]
Now, $\int_0^1 |h(x)|^2\1_{\{|h(x)|<\e\}}\,\d x	\le \|h\|_{\L^2}^2$
and
\[
	\int_0^1|h(x)|^2|\log|h(x)|| \1_{\{0< |h(x)|\le 1\}}\,\d x
	\le \int_0^1 |h(x)|\log(1/|h(x)|) \1_{\{0< |h(x)|\le 1\}}\,\d x
	\le \e^{-1},
\]
since $y\log(1/y)\le \e^{-1}$ for all $y\in[0\,,1]$. Therefore,
\begin{align*}
	\|h\|_{\L^2\log\L} &= 
	  	\int_0^1 |h(x)|^2\, \1_{\{|h(x)|\le\e\}}\, \d x + 
		\int_0^1	 |h(x)|^2\log |h(x)|\,\1_{\{|h(x)|>\e\}}\,\d x \\
	& \le \|h\|_{\L^2} + \int_0^1	|h(x)|^2\log |h(x)| \,\d x \\
	&\qquad  - \int_0^1 |h(x)|^2\log |h(x)|\,\1_{\{0<|h(x)|<1\}}\,\d x 
		- \int_0^1	 |h(x)|^2\log |h(x)|\,\1_{\{1\le |h(x)|\le\e\}}\,\d x\\
	& \le  \|h\|_{\L^2} +\int_0^1	|h(x)|^2\log |h(x)| \,\d x + \e^{-1}.
\end{align*}
Together with these
remarks, and a standard density argument,
Gross' logarithmic Sobolev inequality can be cast in the following
manner in terms of $\L^2\log\L$ norms.

\begin{theorem}[The logarithmic Sobolev inequality]\label{th:Gross}
	If $h\in W^{1,2}[0\,,1]$  vanishes continuously
	at the boundary, then
	\[
		\|h\|_{\L^2\log\L}^2 \le \varepsilon\|h'\|_{\L^2}^2
		+K_\varepsilon \|h\|_{\L^2}^2
		+ \|h\|_{\L^2}^2\log_+\left(\|h\|_{\L^2}^2\right)+\e^{-1},
	\]
	for every $\varepsilon\in(0\,,1)$,
	where $K_\varepsilon := 1 + \tfrac14\log(1/\varepsilon)$.
\end{theorem}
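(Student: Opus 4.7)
The strategy is simply to combine the classical Gross logarithmic Sobolev inequality displayed at the start of the section with the elementary truncation estimate derived in the preceding paragraph, and then to extend the resulting bound from differentiable test functions to all of $W^{1,2}_0[0\,,1]$ by a density argument.

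Concretely, I would start from the inequality obtained in the text,
\[
    \|h\|_{\L^2\log\L}^2 \le \|h\|_{\L^2}^2 + \int_0^1 |h(x)|^2 \log|h(x)|\,\d x + \e^{-1},
\]
which is valid for any measurable $h:[0\,,1]\to\R$ and which trades the positive-part logarithm $\log_+$ for the signed logarithm $\log$ at the cost of $\|h\|_{\L^2}^2 + \e^{-1}$; the two elementary bounds above the theorem, namely that $\int_0^1|h|^2\,\1_{\{|h|\le\e\}}\,\d x\le\|h\|_{\L^2}^2$ and that $\int_0^1 |h|^2|\log|h||\,\1_{\{0<|h|<1\}}\,\d x\le\e^{-1}$, are exactly what is needed here.

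Next, assuming $h$ is differentiable on $[0\,,1]$ and vanishes at the endpoints, I would apply Gross' inequality (the first displayed inequality of the section) to the middle term and collect constants. This gives
\[
    \|h\|_{\L^2\log\L}^2 \le \varepsilon\|h'\|_{\L^2}^2 + K_\varepsilon \|h\|_{\L^2}^2 + \|h\|_{\L^2}^2\log\bigl(\|h\|_{\L^2}^2\bigr) + \e^{-1},
\]
with $K_\varepsilon := 1 + \tfrac14\log(1/\varepsilon)$. Since $\log(w)\le\log_+(w)$ for every $w>0$ by the definition $\log_+(w)=\log(\max(w\,,\e))$, I can replace $\log$ by $\log_+$ in the third term without weakening the bound, recovering exactly the stated inequality for differentiable $h$ vanishing at $\{0\,,1\}$.

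Finally, to extend to arbitrary $h\in W^{1,2}[0\,,1]$ vanishing continuously at $\{0\,,1\}$, I would pick $h_n\in C_c^\infty(0\,,1)$ with $h_n\to h$ in $W^{1,2}$, which is possible since such $h$ belong to $W^{1,2}_0[0\,,1]$. In one spatial dimension, the Sobolev embedding $W^{1,2}\hookrightarrow C^{1/2}$ gives uniform convergence $h_n\to h$, so $\sup_n\|h_n\|_{\L^\infty}<\infty$ and $|h_n|^2\log_+|h_n|$ converges uniformly to $|h|^2\log_+|h|$ on $[0\,,1]$; dominated convergence then passes every term in the inequality to the limit. The only potentially delicate point in the argument is this last limiting step, and in one spatial dimension it is routine thanks to the uniform $\L^\infty$ control automatically provided by the Sobolev embedding.
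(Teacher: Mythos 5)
Your proof is correct and follows essentially the same route the paper sketches just before the theorem: reduce $\|h\|_{\L^2\log\L}^2$ to $\|h\|_{\L^2}^2 + \int|h|^2\log|h| + \e^{-1}$ via the truncation estimate, apply Gross' inequality (first display of the section) to the middle term, replace $\log$ by $\log_+$ using $\log\le\log_+$, and extend from smooth compactly supported functions to $W^{1,2}_0$ by density. The paper leaves the density step to a phrase (``a standard density argument''); your use of the 1D Morrey embedding $W^{1,2}\hookrightarrow C^{1/2}$ to get uniform convergence and pass to the limit is exactly the right way to make it rigorous.
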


We are ready to verify Theorem \ref{th:L2}.

\begin{proof}[Proof of Theorem \ref{th:L2}]
	For every $R>0$, consider the stopping times
	\[
		\tau(R) := \begin{cases}
		          \inf\left\{ t \in [0\,,\tau):\, 
		          \|u(t)\|_{\L^2}> R\right\} & 
		          \text{ if } \{\,\cdots \} \neq \varnothing, \\
		          \tau & \text { otherwise}.
		\end{cases}
	\]
	We aim to prove that 
	$\P\{\sup_{t < \tau \wedge T}  \Vert u(t) \Vert_{\L^2} = \infty\} = 0$. 
	
	Since
	\[
	   \left\{\sup_{t < \tau \wedge T}  \Vert u(t) \Vert_{\L^2} = \infty\right\} 
	   = \bigcap_{R > 0} \{\tau(R) < \tau \wedge T \},
	\]
	it suffices to prove that
	\begin{equation}\label{e_suff}
	   \lim_{R \to \infty} \P\{\tau(R) < \tau\wedge T \} = 0 \qquad \text{ for all } T>0.
	\end{equation}
	
	For every constant $R>0$,
	consider the following stochastic PDE with random forcing and no reaction term:
	\begin{equation}\label{v:L2}
		\dot{v}_R(t\,,x) = \tfrac12 v_R''(t\,,x) + \sigma(u(t\wedge\tau(R)\,,x))\xi(t\,,x)
		\qquad[t >0,\, 0\le x\le 1].
	\end{equation}
	We consider \eqref{v:L2}
	subject to the initial condition $v_R(0)=0$ and the same boundary 
	conditions as \eqref{SHE}; that is, $v_R(t\,,0)=v_R(t\,,1)=0$ for all
	$t>0$. The solution process $t\mapsto v_R(t)$ exists, is unique
	[in $\L^2$], and is an $\L^2$-valued
	stochastic process that satisfies the following weak random integral
	equation viewed as an equation in $\L^2$:
	\begin{equation}\label{eq:v}
		v_R(t\,,x) = \int_{(0,t)\times(0,1)} G_{t-s}(x\,,y)
		\sigma(u(s\wedge\tau(R)\,,y))\,\xi(\d s\,\d y).
	\end{equation}
	Notice that the stochastic integral is well-defined as a 
	Walsh integral because $\sigma$ is assumed to be bounded.
	
	In the above, the function $G:(0\,,\infty)\times[0\,,1]^2\to\R_+$ denotes
	the heat kernel; as was mentioned earlier, we will use $\cG:=\{\cG_t\}_{t\ge0}$ 
	to denote the
	corresponding heat semigroup. That is, $\cG_0 f=f$, for $t>0$,
	\begin{equation}\label{G_t}
		(\cG_tf)(x) := \int_0^1 G_t(x\,,y) f(y)\,\d y,
	\end{equation}
	and $(t\,,x\,,y)\mapsto G_t(x\,,y)$ denotes the
	fundamental solution to the heat equation
	$\dot{G} = \frac12 G''$ on $(0\,,\infty)\times[0\,,1]$ with
	zero boundary conditions, viz.,
	\begin{equation}\label{G}
		G_t(x\,,y) = 2\sum_{n=1}^\infty\sin(n\pi x)\sin(n\pi y)\exp\left(-\frac{
		n^2\pi^2 t}{2}\right),
	\end{equation}
	for every $t>0$ and $x,y\in[0\,,1]$. We recall the
	well-known inequality, valid for all $t>0$ and $x,y \in [0,1]$:
	\begin{equation}\label{boundG}
	   0 \le G_t(x\,,y) \leq p(t\,,x-y),
	\end{equation}
	where $p(t\,,\cdot)$ denotes the standard $N(0\,,t)$ probability density function.
	The preceding assertion is an immediate consequence of
	the classical fact that the mapping
	$(0\,,\infty)\times[0\,,1]^2\ni (s\,,a\,,b)\mapsto G_s(a\,,b)$
	describes the transition densities of a Brownian motion
	killed upon reaching $\{0\,,1\}$; see Bass \cite[Ch.\ 2, \S 7]{Bass}.

	Define, for every fixed $R>0$,
	\[
		d_R := u - v_R.
	\]
	We may observe that $d_R$ is an $\L^2_{\textit{loc}}$-solution of the following
	heat equation: On $\{\tau > t\}$,
	\begin{equation}\label{d}
		\dot{d}_R (t) = \tfrac12d_R'' (t) + b\left( v_R(t)+d_R(t)\right),
	\end{equation}
	subject to initial condition $d_R(0)=u_0$ and 
	boundary values $d_R(t\,,0) = d_R(t\,,1) =0$
	for all $t>0$. It should be emphasized that \eqref{d} is an 
	ordinary partial differential equation with a random coefficient.
	
	Choose and fix some $T>0$.
	Since $\sigma$ is a bounded measurable function, standard estimates
	\cite[Chapter 3]{Walsh} show that there exists $\beta>0$
	such that for all $p\in(2\,,\infty)$,
	$x,y\in[0\,,1]$, and $s,t\in[0\,,T]$,
	\begin{equation}\label{Kolm}
		\E\left(|v_R(t\,,x) - v_R(s\,,y)
		|^p\right)\apprle |t-s|^{p\beta} + |x-y|^{p\beta},
	\end{equation}
	where the implied constant depends only on $p$ and $T$. 
	Since $v_R(0)$ vanishes on $\{0\,,1\}$, a
	standard application of the Kolmogorov continuity theorem
	for random fields \cite[p.\ 31]{Kunita} then shows that
	\begin{equation}\label{2.10}
		A_T :=\sup_{R>0}\E\left(\sup_{t\in[0,T]}\sup_{x\in[0,1]}\left|
		v_R(t\,,x)\right|\right) <\infty.
	\end{equation}
	
	Consider the stopping time
	\[
		\tau_M(R) := \inf\left\{ t>0:\
		\sup_{x\in[0,1]}\left| v_R(t\,,x)\right|
		>M\right\}
		\qquad\text{for every $M>0$}.
	\]
	It follows from \eqref{2.10} and the Chebyshev inequality that
	\begin{equation}\label{2.11}
		\sup_{R>0}
		\P\left\{ \tau_M(R) < T\right\} \le
		\frac{A_T}{M}.
	\end{equation}
	
	Next, we observe that \eqref{Kolm} and a suitable form
	of the Kolmogorov continuity theorem \cite[p.\ 31]{Kunita}
	together show also that $v_R(\cdot,\cdot)$ has a version with continuous sample paths a.s.
	The process $u(\cdot\wedge\tau(R))(\cdot)$ also has a jointly continuous version (for $t>0$ and $x\in [0,1]$). Indeed, from the ``weak'' formulation of Definition \ref{def1.7}, one deduces as in \cite[Chapter III]{Walsh} that $u(t)$ is also a mild solution of \eqref{SHE}, which is $L^2$-bounded prior to time $\tau(R)$, so using the growth condition on $b$ and the fact that $\sigma$ is bounded, one easily checks the conditions of the Kolmogorov continuity theorem to deduce the existence of a jointly continuous version (for $t>0$, $x \in [0,1]$) of $u(\cdot\wedge\tau(R))(\cdot)$.

	
	Define two random space-time functions $D$ and $V$ as
	\[
		D(t) := d_R\left( t\wedge\tau(R)\wedge\tau_M(R)\right),
		\quad
		V(t) := v_R\left( t\wedge\tau(R)\wedge\tau_M(R)\right)
		\qquad[0\le t\le T],
	\]
	all the time suppressing the dependence of $D$ and $V$ on $(R\,,M)$,
	as well as on the spatial variable $x\in[0\,,1]$. 
	
	In order to show that for $s>0$, $D(s)$ has some regularity as a function of $x$, let $H_0^1$  denote the completion of $C_0^{\infty}(0\,,1)$ under the Sobolev norm $\|u\|_{H_0^1} := \{\int_0^1\vert u'(x)\vert^2\,\d x\}^{1/2}$.
	We claim that for $t>0$, $D(t) \in H_0^1$, that is, 
	\begin{equation}\label{claim:D':L2}
		D'(t)\in \L^2\qquad\text{a.s., for all $t>0$.}
	\end{equation}
	In order to verify \eqref{claim:D':L2}, recall that 
	\begin{equation}\label{2.12-1}
		\|\cG_tf\|_{H_0^1}\leq t^{-1/2}\|f\|_{\L^2}
		\qquad\text{for every $f\in \L^2$};
	\end{equation}
	see for example Cerrai \cite[(2.6)]{cerrai}.
	Moreover \cite[(2.7)]{cerrai}, 
	\begin{equation}\label{2.12-2}
		\|\cG_tf\|_{\L^2}\leq c_p t^{-(2-p)/(4p)}\|f\|_{\L^p}
		\qquad\text{for every $f\in \L^p$ and $p\in(1\,,2]$}.
	\end{equation}
	The $L\log L$ growth of $b$ implies that for all $p\in(1\,,2)$,
	\begin{equation}\label{2.12-3}
	\|b(V(s)+  D(s))\|_{\L^p}\leq C_{p,M}(1+\|D(s)\|_{\L^2})
	\qquad\text{for all $s>0$},
	\end{equation}
	where $C_{p,M}$ is a finite constant.
	
	   The mild formulation of \eqref{d} yields
	\begin{equation}\label{2.12-4}
		D(t) = \cG_t u_0+ \int_0^t \cG_{t-s}\left(b(V(s)+  D(s))\right)\,\d s
		\qquad[t>0].
	\end{equation}
	Therefore, 
	\begin{equation*}
	   \|D(t)\|_{H_0^1} \leq \|\cG_t u_0\|_{H_0^1} + \int_0^t \left\|  \cG_{t-s}\left(b(V(s)+  D(s))\right)\right\|_{H_0^1}
			\d s.
	\end{equation*}
  By \eqref{2.12-1} and the semigroup property of $t\mapsto\cG_t$, the right-hand side is bounded above by
	\begin{equation*}
	   t^{-1/2}\|u_0\|_{\L^2}
			+ \int_0^t \left(\frac{2}{t-s}\right)^{1/2} 
			\left\|  \cG_{(t-s)/2}\left(b(V(s)+  D(s))\right)\right\|_{\L^2}
			\d s.
	\end{equation*}
	By \eqref{2.12-2}, then \eqref{2.12-3}, for every $p\in(1\,,2)$ and $t>0$, this is bounded above by 
		\begin{align*}
		& t^{-1/2}\|u_0\|_{\L^2}+ c_p\int_0^t 
		    	\left(\frac{2}{t-s}\right)^{1/2} 
			\left(\frac{2}{t-s}\right)^{(2-p)/(4p)}\|b(V(s)+  D(s))\|_{\L^p}\,\d s\\
		&\qquad\leq t^{-1/2}\|u_0\|_{\L^2}+ 
			c_pC_{p,M}\left(1+\sup_{0\leq s\leq t}\|D(s)\|_{\L^2}\right)  
			\cdot\int_0^t\left(\frac{2}{t-s}\right)^{(2+p)/(4p)}\d  s,
	\end{align*}
	which is finite since $(2+p)/(4p) <1$. This  implies \eqref{claim:D':L2}.
	
	Now that we have proved \eqref{claim:D':L2},
	we may combine \eqref{d} with the chain rule of \cite[Lemma 1.1]{pardoux} in order to see that for every
	$t\in[0\,,T]$, 
	\begin{align}\nonumber
		\|D(t) \|_{\L^2}^2 &=  \|u_0\|_{\L^2}^2 
			+ 2 \int_0^t \langle \dot{D}(s)\,,  D(s) \rangle_{\L^2}\, \d s \\ \nonumber
		&=\|u_0\|_{\L^2}^2 + 2 
			\int_0^t \langle \tfrac12 D''(s) \,, D(s) \rangle_{\L^2}\, \d s  
			+ 2 \int_0^t \langle b(V(s)+  D(s))\,,D(s) \rangle_{\L^2}\, \d s\\
		&=\|u_0\|_{\L^2}^2- \int_0^t\left\|  D'(s)\right\|_{\L^2}^2\,\d s
		+ 2\int_0^t\left\langle
		b(V(s)+  D(s))\,,D(s)\right\rangle_{\L^2}\,\d s,
		\label{2.12}
	\end{align}
thanks to integration by parts (in fact, the second equality is formal, since $D''(s)$ may not belong to $\L^2$); the equality of the third line with the first is obtained by smoothing the term $b\left( v_R(t)+d_R(t)\right)$ in \eqref{d} and passing to the limit using the continuity property in \cite[Lemma 1.2]{pardoux}).
	
	As in \cite{FangZhang}, we consider the Lyapunov function,
	\[
		\Phi(r) := \exp\left( \int_0^r \frac{\d z}{1+z\log_+z}\right)
		\qquad[r>0].
	\]
	Owing to \eqref{2.12} and a second application of the chain rule,
	\begin{equation}\label{2.13}\begin{split}
		\Phi\left( \|D(t)\|_{\L^2}^2\right)
			&= \Phi\left( \|u_0\|_{\L^2}^2\right)- \int_0^t
			\Phi'\left( \|D(s)\|_{\L^2}^2\right)
			\left\| D'(s) \right\|_{\L^2}^2\,\d s\\
		&\qquad+ 2 \int_0^t\Phi'\left( \|D(s)\|_{\L^2}^2\right)
			\left\langle b(V(s)+D(s)) \,,D(s)\right\rangle_{\L^2}\,\d s.
	\end{split}\end{equation}
	
	Define
	\[
		C_b := \sup_{z\in\R}\frac{|b(z)|}{1+|z|\log_+|z|},
	\]
	and observe that $C_b\in(0\,,\infty)$, thanks to the $L\log L$
	growth of $b$. Moreover,
	\begin{align*}
		&\left\langle b(V(s)+D(s)) \,,D(s)\right\rangle_{\L^2}\\
		&\qquad \le C_b\int_0^1 \left[1+ \left\{ |V(s\,,x)| + |D(s\,,x)|\right\}
			\log_+\left( |V(s\,,x)|+|D(s\,,x)|\right)\right] \times
			|D(s\,,x)|\,\d x\\
		&\qquad \le C_b\int_0^1 \left[1+\left\{ M + |D(s\,,x)|\right\}
			\log_+\left( M +|D(s\,,x)|\right)\right]\times
			|D(s\,,x)|\,\d x,
	\end{align*}
	for every $s\in[0\,,T]$. In particular,
	\[
		\left\langle b(V(s)+D(s)) \,,D(s)\right\rangle_{\L^2}
		\apprle \left\| D(s)\right\|_{\L^2\log\L}^2
		+ \| D(s)\|_{\L^2}^2 + \|D(s)\|_{\L^1},
	\]
	for all $s\in[0\,,T]$, where the implied constant depends only on $(C_b\,,M)$.
	The Cauchy--Schwarz inequality yields
	\[
		\|D(s)\|_{\L^1} \le \|D(s)\|_{\L^2}
		\apprle \|D(s)\|_{\L^2}^2+1
		\qquad\text{for all $s>0$},
	\]
	and hence
	\[
		\left\langle b(V(s)+D(s)) \,,D(s)\right\rangle_{\L^2}
		\le \bar{C}\left\{\left\| D(s)\right\|_{\L^2\log\L}^2
		+ \| D(s)\|_{\L^2}^2 + 1\right\},
	\]
	uniformly for all $s\in[0\,,T]$, where $\bar{C}$ is a non-random
	and finite constant that depends only on $(C_b\,,M)$. Recall
	that $D$ is differentiable in $x$, $D'(s)\in \L^2$ a.s.\ for all $s>0$, 
	and $D$ satisfies the Dirichlet boundary condition.
	Therefore, we may apply the logarithmic Sobolev inequality
	[Theorem \ref{th:Gross}] with $\varepsilon := 1/(2\bar{C})$ in order to
	see that
	\[
		\left\langle b(V(s)+ D(s)) \,,D(s)\right\rangle_{\L^2}
		\le  \tfrac12 \| D'(s) \|_{\L^2}^2 + c_*\left\{\|D(s)\|_{\L^2}^2
		+ \|D(s)\|_{\L^2}^2 \log_+\left( \|D(s)\|_{\L^2}^2\right)+1\right\},
	\]
	uniformly for all $s\in[0\,,T]$, where $c_*$ is a non-random
	and finite constant, and depends only on $(C_b\,,M)$. Since
	$a\log_+a + 1 \ge a$ for all $a\ge 0$ and because
	the coefficient of $\|D'(s)\|_{\L^2}^2$ is one-half in the
	above displayed inequality, we can deduce
	the following from \eqref{2.13}:
	\[
		\Phi\left( \|D(t)\|_{\L^2}^2\right)
		\apprle \Phi\left( \|u_0\|_{\L^2}^2\right) + \int_0^t\Phi'\left( \|D(s)\|_{\L^2}^2\right)
		\left\{1+\|D(s)\|_{\L^2}^2 \log_+\left( \|D(s)\|_{\L^2}^2\right) \right\}\d s,
	\]
	uniformly for all $t\in[0\,,T]$, where the implied constant is non-random
	and finite, and depends only on $(C_b\,,M)$.
	But $\Phi'(r)[1+r\log_+r]=\Phi(r)$ for all $r\ge0$. Therefore, the preceding
	inequality implies that
	\begin{equation}\label{e_gronwall}
		\Phi\left( \|D(t)\|_{\L^2}^2\right)
		\apprle \Phi\left( \|u_0\|_{\L^2}^2\right) +  
		\int_0^t\Phi\left( \|D(s)\|_{\L^2}^2\right)\d s,
	\end{equation}
	uniformly for all $t\in[0\,,T]$, where the implied constant is non-random
	and finite, and depends only on $(C_b\,,M\,,T)$, but not on $R$.
	Thanks to the definitions of $\tau(R)$ and $\tau_M(R)$, we  know already
	that $\|D(t)\|_{\L^2} \leq R+M < \infty$ for all $t \in [0\,,T]$. 
	It then follows  from \eqref{e_gronwall} and Gronwall's inequality that
	$\sup_{t\in[0,T]}\Phi(\|D(t)\|_{\L^2}^2)$ is a.s.\ bounded from above by a
	non-random finite number $B(C_b\,,M\,,T)$, that depends only 
	on $(C_b\,,M\,,T)$ (but not on $R$), whence
	\begin{equation}\label{2.20}
		\sup_{R>0}
		\E\left[\Phi\left( \|d_R(T\wedge\tau(R)\wedge\tau_M(R)\|_{\L^2}^2
		\right)\right]   \le B(C_b\,,M\,,T).
	\end{equation}
	
	Next we observe that, almost surely on the event
	$\{\tau(R)<\tau\wedge T \le \tau_M(R)\}$,
	\begin{align*}
		\left\|d_R(T\wedge\tau(R)\wedge\tau_M(R))\right\|_{\L^2}
			& = \|d_R(\tau(R)) \|_{\L^2} \\
		&= \left\|
			u(\tau(R)) - v_R(\tau(R)) \right\|_{\L^2}\\
		&\ge \|u(\tau(R))\|_{\L^2} - \|v_R(\tau(R))\|_{\L^2}\\
		&= R -  \|v_R(\tau(R))\|_{\L^2}.
	\end{align*}
	On the other hand,
	\begin{align*}
		\|v_R(\tau(R))\|_{\L^2} &\le \sup_{t\in[0,\tau \wedge T)}\sup_{x\in[0,1]}
			|v_R(t\,,x)|\\
		&\le M\qquad\text{a.s.\ on $\{\tau(R) < \tau \wedge T\le\tau_M(R)\}$}.
	\end{align*}
	Thus, we see that
	\[
		\left\|d_R(T\wedge\tau(R)\wedge\tau_M(R))\right\|_{\L^2}
		\ge R-M\qquad\text{a.s.\ on $\{\tau(R)< \tau \wedge T\le \tau_M(R)\}$},
	\]
	whence
	\[
		\Phi\left(
		\left\|d(T\wedge\tau(R)\wedge\tau_M(R))\right\|_{\L^2}^2
		\right)
		\ge \Phi\left((R-M)^2\right)
		\quad\text{a.s.\ on $\{\tau(R)< \tau \wedge T\le \tau_M(R)\}$},
	\]
	as long as $R>M$.
	Combine this with \eqref{2.20} to see that
	\[
		\P\left\{ \tau(R)< \tau \wedge T\le \tau_M(R)\right\}
		\le \frac{B(C_b\,,M\,,T)}{\Phi\left((R-M)^2\right)}\qquad\text{for all $R>M>0$}.
	\]
	The preceding inequality and \eqref{2.11} together show that
	\[
		\P\left\{\tau(R)< \tau \wedge T\right\} \le
		\frac{B(C_b\,,M\,,T)}{\Phi\left((R-M)^2\right)} + \frac{%
		A_T}{M},
	\]
	for all $R>M$. We first let $R\to\infty$ and then $M\to\infty$ in order
	to deduce \eqref{e_suff}. This completes the proof of Theorem \ref{th:L2}.
\end{proof}

\section{Prelude to the proof of Theorem \ref{th:C:alpha}}
Throughout this section, we consider \eqref{SHE}
only in the classical case where
\[
	\text{$b$ and $\sigma$ are globally Lipschitz continuous.}
\]
It is well known that, in this case, \eqref{SHE}
is well-posed (see Walsh \cite[Ch.\ 3]{Walsh}).
Here, we develop some {\it a priori} moment
bounds. One of our main goals is to establish {\it a priori}
smoothness bounds for the solution
of \eqref{SHE} that are valid up to and including the boundary of $[0\,,1]$.
This endeavor requires some careful estimates and ultimately leads to an interesting
optimal regularity theorem [Theorem \ref{th:OR}] that forms one of the main ingredients
in the proof of Theorem \ref{th:C:alpha}.



\subsection{Moment Bounds}

We begin by establishing moment bounds for the solution
$u$ to \eqref{SHE}.
With this goal in mind, we frequently use the
elementary fact that for every globally Lipschitz function $f:\R\to\R$, there are constants $c(f)$ and $\Lip(f)$ such that
\begin{equation}\label{Lip(f)}
	|f(z)| \le c(f) +  \Lip(f)|z|,\qquad\text{for all $z\in\R$}.
\end{equation}
One possibility is to take $c(f) = \vert f(0)\vert$ and $\Lip(f) =$~Lip$(f)$, but often, $\Lip(f)$ can be chosen strictly smaller than Lip$(f)$. We will only consider the case where
\begin{equation}\label{LipLip}
	\Lip(b)\ge 4\Lip(\sigma)^4>0.
\end{equation}
The significance of this assumption, which is not a restriction since $\Lip(b)$ can be chosen arbitrarily large, will manifest itself later on in the proof
of Theorem \ref{th:C:alpha}.

{\em	Throughout this section, \eqref{LipLip}
	will be assumed tacitly.
}

\begin{proposition}\label{I:pr:Lk}
	There exists a finite universal constant $A$ such that
	\[
		\sup_{x\in[0,1]}
		\E\left(|u(t\,,x)|^k\right)\le
		A^k\left(\|u_0\|_{\L^\infty} + \frac{c(b)}{\Lip(b)} +
		\frac{c(\sigma)}{\sqrt{\Lip(\sigma)}}\right)^k\cdot
		\exp\left(Ak\Lip(b) t\right),
	\]
	uniformly for all real numbers $t\ge0$ and $k\in[2\,, \sqrt{\Lip(b)}/\Lip(\sigma)^2]$.
\end{proposition}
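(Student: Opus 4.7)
The plan is to start from the mild formulation of \eqref{SHE}, which is valid here because $b$ and $\sigma$ are globally Lipschitz: write
\[
   u(t,x) = (\cG_t u_0)(x) + I_1(t,x) + I_2(t,x),
\]
where $I_1(t,x)=\int_0^t\!\int_0^1 G_{t-s}(x,y)b(u(s,y))\,\d s\,\d y$ and $I_2(t,x)$ is the Walsh integral of $G_{t-s}(x,y)\sigma(u(s,y))$. I will estimate $\|u(t,x)\|_k$ by bounding the three terms separately. The first is controlled by the maximum principle, $\|\cG_t u_0\|_{\L^\infty}\le\|u_0\|_{\L^\infty}$. For $I_1$, Minkowski's inequality combined with the linear growth $|b(z)|\le c(b)+\Lip(b)|z|$ of \eqref{Lip(f)} and the identity $\int_0^1 G_s(x,y)\,\d y\le 1$ gives
\[
   \|I_1(t,x)\|_k \le \int_0^t\bigl[\,c(b)+\Lip(b)\sup_y\|u(s,y)\|_k\,\bigr]\,\d s.
\]
For $I_2$, the Burkholder--Davis--Gundy inequality for Walsh integrals (with constant of order $2\sqrt{k}$) together with Minkowski in $L^{k/2}$, the growth bound $|\sigma(z)|\le c(\sigma)+\Lip(\sigma)|z|$, and the heat-kernel estimate $\int_0^1 G_s(x,y)^2\,\d y\le C/\sqrt{s}$ (an immediate consequence of \eqref{boundG} and the Gaussian integral) gives
\[
   \|I_2(t,x)\|_k \le 2\sqrt{k}\,\biggl(\int_0^t\frac{C}{\sqrt{t-s}}\bigl[c(\sigma)+\Lip(\sigma)\sup_y\|u(s,y)\|_k\bigr]^2\,\d s\biggr)^{\!1/2}.
\]

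The second step is to convert these time-integrated bounds into an \emph{exponential-in-$t$} control by introducing the weighted supremum
\[
   M := \sup_{t\ge 0,\,x\in[0,1]} e^{-\beta t}\|u(t,x)\|_k
\]
for a parameter $\beta>0$ to be chosen. Multiplying the displayed estimates by $e^{-\beta t}$ and pushing the weight inside each integral via $e^{-\beta t}\|u(s,y)\|_k=e^{-\beta(t-s)}(e^{-\beta s}\|u(s,y)\|_k)\le e^{-\beta(t-s)}M$, the drift contribution becomes $(\Lip(b)/\beta)M$ plus the additive $c(b)/(e\beta)$, while the key singular-convolution computation $\int_0^t e^{-2\beta(t-s)}(t-s)^{-1/2}\,\d s\le C'/\sqrt{\beta}$ turns the noise contribution into $A_3\sqrt{k}\,c(\sigma)/\beta^{1/4}+A_4\sqrt{k}\,\Lip(\sigma)M/\beta^{1/4}$, for universal $A_3,A_4$. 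Collecting terms I obtain
\[
   M \;\le\; \|u_0\|_{\L^\infty}+\tfrac{c(b)}{e\beta}+A_3\tfrac{\sqrt{k}\,c(\sigma)}{\beta^{1/4}} + \Bigl[\tfrac{\Lip(b)}{\beta}+A_4\tfrac{\sqrt{k}\,\Lip(\sigma)}{\beta^{1/4}}\Bigr]M.
\]

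The decisive step is the choice $\beta:=A\Lip(b)$ for a large universal $A$. The drift coefficient then equals $1/A\le 1/4$, while the noise coefficient is $A_4 A^{-1/4}\sqrt{k}\,\Lip(\sigma)/\Lip(b)^{1/4}$, which is $\le 1/4$ as soon as $k\Lip(\sigma)^2\le \sqrt{\Lip(b)}$ and $A$ is sufficiently large; this is exactly the range hypothesis on $k$. Hence the coefficient of $M$ on the right is $\le 1/2$, and rearranging yields $M\apprle \|u_0\|_{\L^\infty}+c(b)/\Lip(b)+c(\sigma)/\sqrt{\Lip(\sigma)}$: the $c(b)$ term uses $1/(e\beta)\apprle 1/\Lip(b)$, and the $c(\sigma)$ term uses the upper endpoint $k\le \sqrt{\Lip(b)}/\Lip(\sigma)^2$ to convert $\sqrt{k}/\Lip(b)^{1/4}$ into a factor controlled by $1/\sqrt{\Lip(\sigma)}$ (via \eqref{LipLip}). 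Raising to the $k$-th power turns $e^{\beta t}=e^{A\Lip(b)t}$ into $e^{Ak\Lip(b)t}$, which is the claimed form.

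The main obstacle is the delicate balancing act in the choice of $\beta$: the quarter power of $\beta$ appearing in the BDG/heat-kernel bound forces the noise side to scale like $\sqrt{k}\Lip(\sigma)/\beta^{1/4}$, and it is precisely this quarter-power scaling that makes $\Lip(\sigma)^4$ (rather than $\Lip(\sigma)^2$) the relevant quantity in both the restriction \eqref{LipLip} and the $k$-range. A secondary technical point is keeping track of the \emph{explicit} $\sqrt{k}$ constant in the BDG inequality for Walsh integrals, since any looser $k$-dependence would spoil the sharp form of both the exponent $Ak\Lip(b)t$ and the additive constants $c(b)/\Lip(b)$ and $c(\sigma)/\sqrt{\Lip(\sigma)}$.
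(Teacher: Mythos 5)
Your argument is, at its core, the same weighted--supremum scheme the paper uses: linear growth of $b,\sigma$, the bound $\int_0^1 G_t(x,y)\,\d y\le 1$ for the drift, BDG with a $\sqrt k$ constant plus the $\beta^{-1/4}$ singular convolution bound for the noise, and the choice $\beta\asymp\Lip(b)$ so that the $k$-range hypothesis $k\le\sqrt{\Lip(b)}/\Lip(\sigma)^2$ makes the coefficient of the weighted norm at most $\tfrac12$. So the route is correct and matches the paper.

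There is, however, one genuine gap that you should address: you define $M:=\sup_{t\ge0,\,x}\e^{-\beta t}\|u(t,x)\|_k$ directly for the solution $u$ and then rearrange $M\le C+\tfrac12 M$ into $M\le 2C$. That last step is only valid if $M<\infty$ is known \emph{a priori}, which is not obvious from the hypotheses (it is part of what the proposition is supposed to establish). The paper circumvents this by running the same estimate on the Picard iterates $u_n$: $\cN_{\beta,k}(u_0)=\|u_0\|_{\L^\infty}<\infty$, the recursion $\cN_{\beta,k}(u_{n+1})\le K_{\beta,k}+\tfrac12\cN_{\beta,k}(u_n)$ propagates finiteness and gives a uniform-in-$n$ bound, and then Fatou together with $u_n\to u$ in $L^k(\Omega)$ transfers the bound to $u$. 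You should insert this iterate-and-pass-to-the-limit (or an equivalent localization/truncation) step to make the bootstrap legitimate.

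A smaller point: your parenthetical claim that the upper endpoint of the $k$-range ``converts $\sqrt k/\Lip(b)^{1/4}$ into a factor controlled by $1/\sqrt{\Lip(\sigma)}$ via \eqref{LipLip}'' does not quite work. With $\sqrt k\le\Lip(b)^{1/4}/\Lip(\sigma)$ and $\beta\asymp\Lip(b)$, what comes out is $\sqrt k\,c(\sigma)/\beta^{1/4}\apprle c(\sigma)/\Lip(\sigma)$, which is exactly what the paper derives in \eqref{I:N} and what appears in the definition of $\cM_3$; using \eqref{LipLip} does not upgrade $1/\Lip(\sigma)$ to $1/\sqrt{\Lip(\sigma)}$ (these are not comparable without a sign on $\Lip(\sigma)-1$). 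This is more a reflection of the statement's phrasing than a flaw in your estimate: your computation gives $c(\sigma)/\Lip(\sigma)$, and that is the correct constant.
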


\begin{remark}
	Condition \eqref{LipLip} ensures that the interval
	$[2\,,\sqrt{\Lip(b)}/\Lip(\sigma)^2]$ is nonempty.
\end{remark}

\begin{proof}
	Throughout, we write $\cB(t)$ for the box
	\begin{equation}\label{R(t)}
		\cB(t) := (0\,,t)\times(0\,,1),\qquad\text{for every $t\ge 0$}.
	\end{equation}
	In light of Definition \ref{def1.7}, 
	\begin{equation}\label{I:mild}\begin{split}
		&u(t\,,x) \\
		&= (\cG_tu_0)(x) + \int_{\cB(t)}
			G_{t-s}(x\,,y)b(u(s\,,y))\,\d s\,\d y
			+ \int_{\cB(t)} G_{t-s}(x\,,y)
			\sigma(u(s\,,y))\,\xi(\d s\,\d y),
	\end{split}\end{equation}
	where $\cG$ and $G$ were defined respectively in
	\eqref{G_t} and \eqref{G}. 
	The solution $u$ to \eqref{SHE} has a [jointly] continuous version 
	which is the unique continuous solution of \eqref{I:mild}.
	
	Let us also recall that one verifies the existence of a solution to 
	\eqref{I:mild} by applying Picard's iteration method as follows: Set
	$u_0(t\,,x) := u_0(x)$ for all $x\in[0\,,1]$, and then iteratively define
	\begin{align*}
		&u_{n+1}(t\,,x)\\
		&:=(\cG_tu_0)(x) + \int_{\cB(t)}
			G_{t-s}(x\,,y)b(u_n(s\,,y))\,\d s\,\d y
			+ \int_{\cB(t)} G_{t-s}(x\,,y)
			\sigma(u_n(s\,,y))\,\xi(\d s\,\d y).
	\end{align*}
	Then, 
	\begin{equation}\label{unu}
		\lim_{n\to\infty} u_n(t\,,x) = u(t\,,x)
		\qquad\text{in $L^k(\Omega)$ for all $k\ge 1$, $t\ge0$, and $x\in[0\,,1]$}.
	\end{equation}
	See Walsh \cite[Ch.\ 3]{Walsh}.
	
	We now follow Foondun and Khoshnevisan
	\cite{FK} and consider a two-parameter family $\{\cN_{\beta,k}\}_{\beta>0,k\ge1}$
	of norms---each defined on the space of space-time random fields---as follows:
	For all real numbers $\beta>0$ and $k\ge1$, and for every
	space-time random field $\Phi:=\{\Phi(t\,,x);\, t\ge0, x\in[0\,,1]\}$,
	\begin{equation}\label{N}
		\cN_{\beta,k} (\Phi) := \sup_{t\ge0}\sup_{x\in[0,1]}\left(
		\e^{-\beta t}\| \Phi(t\,,x)\|_k\right).
	\end{equation}
	Since $|(\cG_tu_0)(x)|\le \|u_0\|_{\L^\infty}$ for all $x\in[0\,,1]$
	and $t> 0$, we can  write
	\begin{equation}\label{I:NT1T2}
		\cN_{\beta,k}(u_{n+1}) \le \|u_0\|_{\L^\infty} +
		\cT_1 + \cT_2,
	\end{equation}
	where
	\begin{align*}
		\cT_1 &:= \sup_{t\ge0}\sup_{x\in[0,1]}\left(
			\e^{-\beta t}\int_{\cB(t)} G_{t-s}(x\,,y)\|b(u_n(s\,,y))\|_k\,
			\d s\,\d y\right),\\
		\cT_2 &:=\sup_{t\ge0}\sup_{x\in[0,1]}\left(
			\e^{-\beta t}\left\|\int_{\cB(t)} G_{t-s}(x\,,y)
			\sigma(u_n(s\,,y))\, \xi(\d s\,\d y)\right\|_k\right).
	\end{align*}
	We plan to estimate $\cT_1$ and $\cT_2$ in this order.
	
	Recall that $\int_0^1 G_\rho(x\,,y)\,\d y$ is the probability that
	Brownian motion, started at $x\in[0\,,1]$, does not reach the set
	$\{0\,,1\}$ before time $\rho>0$. Therefore, the support theorem for the
	Wiener measure implies that
	\begin{equation}\label{int:G}
		\int_0^1 G_\rho(x\,,y)\,\d y < 1
		\qquad\text{for all $\rho>0$ and
		$x\in[0\,,1]$.}
	\end{equation}
	From this and \eqref{Lip(f)} we can deduce that
	\begin{align*}
		\cT_1 &\le c(b)\sup_{t\ge0}\left(t\e^{-\beta t}\right)
			+\Lip(b)\sup_{t\ge0}\sup_{x\in[0,1]}\left(\e^{-\beta t}
			\int_{\cB(t)} G_{t-s}(x\,,y) \|u_n(s\,,y)\|_k\,\d s\,\d y\right)\\
		&= \frac{c(b)}{\e\beta} + \Lip(b)
			\sup_{t\ge0}\sup_{x\in[0,1]}\left(
			\int_{\cB(t)} \e^{-\beta (t-s)}G_{t-s}(x\,,y)
			\e^{-\beta s}\|u_n(s\,,y)\|_k\,\d s\,\d y\right)\\
		&\le \frac{c(b)}{\beta} + \Lip(b)\cN_{\beta,k}(u_n)\cdot
			\sup_{t\ge0}\sup_{x\in[0,1]}\left(
			\int_{\cB(t)} \e^{-\beta (t-s)}G_{t-s}(x\,,y) \,\d s\,\d y\right).
	\end{align*}
	Another appeal to \eqref{int:G} yields the following inequality, which
	is our desired bound for the quantity $\cT_1$:
	\begin{equation}\label{I:T1}\begin{split}
		\cT_1 &\le \frac{c(b)}{\beta} + \Lip(b)\cN_{\beta,k}(u_n)\cdot
			\sup_{t\ge0}\left(
			\int_0^t \e^{-\beta (t-s)}\,\d s\right)\\
		&\le \frac{c(b)+ \Lip(b)\cN_{\beta,k}(u_n)}{\beta}.
	\end{split}\end{equation}
	
	In order to estimate $\cT_2$, we first recall that
	\[
		\left\|\int_{\cB(t)} G_{t-s}(x\,,y)
		\sigma(u_n(s\,,y))\, \xi(\d s\,\d y)\right\|_k^2 \le
		4k\int_{\cB(t)}
		\left[G_{t-s}(x\,,y)\right]^2\|\sigma(u_n(s\,,y))\|_k^2\,\d s\,\d y,
	\]
	thanks to a suitable application
	of the BDG inequality (see \cite[Proposition 4.4, p.\ 36]{Kh:CBMS}).
	An appeal to \eqref{Lip(f)} yields
	\[
		\|\sigma(u_n(s\,,y))\|_k \le c(\sigma) +
		\Lip(\sigma) \|u_n(s\,,y)\|_k \le c(\sigma) +
		\Lip(\sigma) \e^{\beta s}\cN_{\beta,k}(u_n).
	\]
	Thus, we see that
	\begin{align*}
		&\left\|\int_{\cB(t)} G_{t-s}(x\,,y)
			\sigma(u_n(s\,,y))\, \xi(\d s\,\d y)\right\|_k^2 \\
		&\le  4k\int_{\cB(t)}
			\left[G_{t-s}(x\,,y)\right]^2
			\left(c(\sigma) +
			\Lip(\sigma) \e^{\beta s}\cN_{\beta,k}(u_n)\right)^2\,\d s\,\d y\\
		&\le 8 kc(\sigma)^2\int_{\cB(t)}
			\left[G_s(x\,,y)\right]^2\,\d s\,\d y + 8k[\Lip(\sigma)]^2
			\left[\cN_{\beta,k}(u_n)\right]^2
			\int_{\cB(t)} \e^{2\beta s}
			\left[G_{t-s}(x\,,y)\right]^2\,\d s\, \d y.
	\end{align*}
	Next we observe that, uniformly for all $t,\beta>0$,
	\[
		\int_{\cB(t)}\left[G_s(x\,,y)\right]^2\,\d s\,\d y
		\le \e^{2\beta t}\int_{\cB(t)} \e^{-2\beta s}[G_s(x\,,y)]^2\,\d s\,\d y
		\apprle \frac{\e^{2\beta t}}{\sqrt\beta},
	\]
	where the final bound is justified by Lemma \ref{lem:A2}, with the implied
	universal constant being equal to $(\pi\sqrt{2})^{-1}$.
	Similarly,
	\[
	   \e^{-2\beta t} \int_{\cB(t)} \e^{2\beta s} [G_{t-s}(x\,,y)]^2 \,\d s\,\d y 
	   = \int_{\cB(t)} \e^{-2\beta s} [G_s(x\,,y)]^2 \,\d s\,\d y 
	   \apprle \frac{1}{\sqrt{\beta}},
	\]
	uniformly for all $t,\beta>0$.
	Consequently,
	\[
		\e^{-2\beta t}\left\|\int_{\cB(t)} G_{t-s}(x\,,y)
		\sigma(u_n(s\,,y))\, \xi(\d s\,\d y)\right\|_k^2
		\apprle \frac{kc(\sigma)^2  + k[\Lip(\sigma)]^2
		\left[\cN_{\beta,k}(u_n)\right]^2}{\sqrt\beta},
	\]
	uniformly for all $n\ge0$, $x\in[0\,,1]$, $\beta>0$, and $k\ge 2$.
	We take square roots of both sides, then optimize over $t\ge0$
	and $x\in[0\,,1]$ in order to see that
	\begin{equation}\label{I:T2}
		\cT_2 \apprle\frac{k^{1/2}}{\beta^{1/4}}\cdot\left(
		c(\sigma)  + \Lip(\sigma)
		\cN_{\beta,k}(u_n)\right),
	\end{equation}
	with the same uniformity properties as before on $(k\,,\beta\,,x\,,n)$.
	This is the desired inequality for $\cT_2$.
	
	   We now combine
	\eqref{I:NT1T2} with \eqref{I:T1} and \eqref{I:T2} in order to see that
	\begin{equation}\label{I:NKLN}
		\cN_{\beta,k}(u_{n+1}) \le K_{\beta,k} + L_{\beta,k}
		\cN_{\beta,k}(u_n),
	\end{equation}
	uniformly for all $\beta>0$, $k\ge 2$, and $n\ge 0$, where
	\begin{align*}
		K_{\beta,k} &:= c\left(\|u_0\|_{\L^\infty} +
			\frac{c(b)}{\beta} + \frac{k^{1/2}c(\sigma)}{\beta^{1/4}}\right),
			\\
		L_{\beta,k} &:= c\max\left(
			\frac{\Lip(b)}{\beta} ~,~ \frac{k^{1/2}\Lip(\sigma)}{\beta^{1/4}}\right),
	\end{align*}
	for a sufficiently-large finite universal constant $c>1$.
	Let us choose $\beta:=16c^4\Lip(b)$ and observe that, for this choice
	of $\beta$,
	\begin{align*}
		K_{16c^4\Lip(b),k} &\le c\|u_0\|_{\L^\infty} + \frac{c(b)}{16\Lip(b)} +
			\frac{k^{1/2}c(\sigma)}{2[\Lip(b)]^{1/4}},\\
		L_{16c^4\Lip(b),k} &\le \max\left( \frac{1}{16}\,,\frac{k^{1/2}\Lip(\sigma)}{%
			2[\Lip(b)]^{1/4}}\right).
	\end{align*}
	In this way, we may simplify \eqref{I:NKLN} to the following recursive
	inequality: Uniformly for all integers $n\ge 0$ and real numbers
	$k\in[2\,, \sqrt{\Lip(b)}/\Lip(\sigma)^2]$,
	\[
		\cN_{16c^4\Lip(b),k}(u_{n+1}) \le c\|u_0\|_{\L^\infty} + \frac{c(b)}{16\Lip(b)} +
		\frac{c(\sigma)}{2\Lip(\sigma)} + \tfrac12
		\cN_{16c^4\Lip(b),k}(u_n).
	\]
	Since $\cN_{16c^4\Lip(b),k}(u_0)=\|u_0\|_{\L^\infty}$ is finite, the preceding
	implies that 
	$\sup_{n\ge0}\cN_{\beta,k}(u_n)<\infty$ 
	for all real numbers $k\in[2\,, \sqrt{\Lip(b)}/\Lip(\sigma)^2]$,
	and, more significantly,
	\[
		\limsup_{n\to\infty}\cN_{16c^4\Lip(b),k}(u_n) \le
		2c\|u_0\|_{\L^\infty} + \frac{c(b)}{8\Lip(b)} +
		\frac{c(\sigma)}{\Lip(\sigma)}.
	\]
	By Fatou's lemma and \eqref{unu},
	\begin{equation}\label{I:N}\begin{split}
		\cN_{16c^4\Lip(b),k}(u) &\le \limsup_{n\to\infty}\cN_{16c^4\Lip(b),k}(u_n)\\
		&\apprle \|u_0\|_{\L^\infty} + \frac{c(b)}{\Lip(b)} +
			\frac{c(\sigma)}{\Lip(\sigma)},
	\end{split}\end{equation}
	uniformly for all $k\in[2\,, \sqrt{\Lip(b)}/\Lip(\sigma)^2]$.
	We can unscramble this inequality directly,
	using only \eqref{N}, in order to deduce the proposition.
\end{proof}

In the context of Proposition \ref{I:pr:Lk},
one might wonder about the moments of order $k$ when
$k>\sqrt{\Lip(b)}/\Lip(\sigma)^2$. In that case, it is possible
to adjust only slightly the proof of Proposition \ref{I:pr:Lk}
in order to obtain the following.

\begin{proposition}\label{I:pr:Lk:bis}
	If $\Lip(b)\ge 4\Lip(\sigma)^4 >0$, then
	there exists a finite universal constant $A$ such that
	\[
		\sup_{x\in[0,1]}
		\E\left(|u(t\,,x)|^k\right)\le
		A^k\left(\|u_0\|_{\L^\infty} + \frac{c(b)}{[\Lip(\sigma)]^4} +
		\frac{c(\sigma)}{\Lip(\sigma)}\right)^k\cdot
		\exp\left(Ak^3[\Lip(\sigma)]^4 t\right),
	\]
	uniformly for all real numbers $t\ge0$ and $k>\sqrt{\Lip(b)}/\Lip(\sigma)^2$.
\end{proposition}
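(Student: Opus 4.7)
The plan is to re-run the Picard-iteration argument from the proof of Proposition \ref{I:pr:Lk} verbatim, up to and including the recursive inequality
\[
    \cN_{\beta,k}(u_{n+1}) \le K_{\beta,k} + L_{\beta,k}\cN_{\beta,k}(u_n),
\]
with
\[
    K_{\beta,k} = c\left(\|u_0\|_{\L^\infty} + \frac{c(b)}{\beta} + \frac{k^{1/2}c(\sigma)}{\beta^{1/4}}\right),\qquad
    L_{\beta,k} = c\max\left(\frac{\Lip(b)}{\beta}\,,\ \frac{k^{1/2}\Lip(\sigma)}{\beta^{1/4}}\right).
\]
The only difference will be in the choice of the free parameter $\beta$. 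In Proposition \ref{I:pr:Lk}, $\beta$ was taken proportional to $\Lip(b)$, which made $\Lip(b)/\beta$ the dominant term of $L_{\beta,k}$; that choice forced the constraint $k \le \sqrt{\Lip(b)}/\Lip(\sigma)^2$, so that the second term remained under control. For the complementary regime $k > \sqrt{\Lip(b)}/\Lip(\sigma)^2$, I would instead make the \emph{second} term of $L_{\beta,k}$ the dominant one by setting
\[
    \beta := C_0\, k^2 [\Lip(\sigma)]^4
\]
for a sufficiently large universal constant $C_0>1$.

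The next step is to check that this choice of $\beta$ simultaneously controls both entries of the max in $L_{\beta,k}$. On the one hand, $k^{1/2}\Lip(\sigma)/\beta^{1/4} = C_0^{-1/4}$, which is $\le 1/(2c)$ once $C_0$ is chosen large. On the other hand, the hypothesis $k > \sqrt{\Lip(b)}/\Lip(\sigma)^2$ gives $k^2[\Lip(\sigma)]^4 > \Lip(b)$, so $\Lip(b)/\beta < 1/C_0 \le 1/(2c)$ as well. Thus $L_{\beta,k}\le 1/2$. With this same $\beta$, each summand of $K_{\beta,k}$ is bounded uniformly in $k\ge 2$ by an absolute multiple of $\|u_0\|_{\L^\infty} + c(b)/[\Lip(\sigma)]^4 + c(\sigma)/\Lip(\sigma)$, using $k^2\ge 1$ for the $c(b)/\beta$ term and the identity $k^{1/2}/\beta^{1/4}=C_0^{-1/4}/\Lip(\sigma)$ for the $c(\sigma)$ term.

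Iterating the recursion exactly as before and invoking Fatou's lemma together with \eqref{unu} yields
\[
    \cN_{\beta,k}(u) \apprle \|u_0\|_{\L^\infty} + \frac{c(b)}{[\Lip(\sigma)]^4} + \frac{c(\sigma)}{\Lip(\sigma)},
\]
uniformly in $k>\sqrt{\Lip(b)}/\Lip(\sigma)^2$. Unscrambling the definition \eqref{N} of $\cN_{\beta,k}$ gives $\|u(t\,,x)\|_k \le A\,\e^{\beta t}\bigl(\|u_0\|_{\L^\infty} + c(b)/[\Lip(\sigma)]^4 + c(\sigma)/\Lip(\sigma)\bigr)$ for an absolute constant $A$; raising to the $k$-th power and substituting $\beta = C_0 k^2[\Lip(\sigma)]^4$ produces an exponent of $k\beta t = C_0 k^3[\Lip(\sigma)]^4 t$, which is precisely the form asserted in the proposition.

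I do not anticipate a real obstacle here: the entire argument of Proposition \ref{I:pr:Lk} is reused, and the only creative step is the rebalancing $\beta \sim k^2[\Lip(\sigma)]^4$ (rather than $\beta \sim \Lip(b)$) so that the $\sigma$-driven term of $L_{\beta,k}$ becomes the binding constraint. The mild bookkeeping subtlety is tracking how the scaling of $\beta$ in $k$ propagates through $K_{\beta,k}$ and finally through the $k$-th power when the norm $\cN_{\beta,k}$ is unscrambled, which is where the $k^3$ (as opposed to $k^2$) appears in the final exponential.
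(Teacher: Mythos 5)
Your proposal is correct and follows essentially the same route as the paper: the paper's proof of Proposition \ref{I:pr:Lk:bis} also reuses the recursion \eqref{I:NKLN} verbatim and simply rebalances the auxiliary parameter to $\beta = 16c^4 k^2[\Lip(\sigma)]^4$ (your $C_0 = 16c^4$), which makes $L_{\beta,k} = \tfrac12$ and bounds $K_{\beta,k}$ exactly as you describe. The bookkeeping that produces the $k^3$ in the exponent when $\cN_{\beta,k}$ is unscrambled matches the paper's as well.
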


\begin{proof}
	We follow the proof of Proposition \ref{I:pr:Lk} up to and including
	\eqref{I:NKLN} without change. However, if $k>\sqrt{\Lip(b)}/\Lip(\sigma)^2$,
	then we choose the auxilliary parameter $\beta$ slightly differently. Namely, let
	us define
	\[
		\beta := 16c^4k^2[\Lip(\sigma)]^4,
	\]
	notation being that of \eqref{I:NKLN}. For this particular choice,
	\begin{equation}\label{rde3.13}
		K_{\beta,k} = c\|u_0\|_{\L^\infty} +
		\frac{c(b)}{16c^3 k^2[\Lip(\sigma)]^4} + \frac{c(\sigma)}{2\Lip(\sigma)},
	\end{equation}
	and $L_{\beta,k} = \nicefrac12$. We apply the preceding particular choice of
	$\beta$ in \eqref{I:NKLN} in order to see that
	\[
		\cN_{\beta,k}(u_{n+1})\le K_{\beta,k} + \tfrac12\cN_{\beta,k}(u_n),
	\]
	for all $n\ge0$. This shows in particular that
	$\cN_{\beta,k}(u_n)\apprle K_{\beta,k}$ uniformly for all $n\ge0$,
	which is another way to state the result.
\end{proof}

\subsection{An optimal regularity theorem}

Next we derive the following optimal regularity result.

\begin{theorem}\label{th:OR}
	The following logical implications are valid:
	\[
		\alpha \in (0\,,\tfrac12),\ u_0\in \C^\alpha_0
		\quad\Longrightarrow\quad
		\P\left\{ u(t)\in \C^\alpha_0
		\text{ \  for all $t>0$}\right\}=1,
	\]
	and
	\[
		\alpha \in [\tfrac12\,,1],\	u_0\in \C^\alpha_0
		\quad\Longrightarrow\quad
		\P\left\{ u(t)\in \bigcap_{\varepsilon >0}\C^{\frac{1}{2}-\varepsilon}_0
		\text{ for all $t>0$}\right\}=1.
	\]
\end{theorem}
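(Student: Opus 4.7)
My approach is to split $u(t\,,\cdot)$ using the mild formulation \eqref{I:mild} into three pieces,
\[
	u(t\,,x) = u_H(t\,,x) + u_D(t\,,x) + u_S(t\,,x),
\]
where $u_H(t\,,\cdot) := \cG_t u_0$ and $u_D$, $u_S$ are the drift convolution and stochastic convolution terms, respectively. The boundary condition $u(t\,,0) = u(t\,,1) = 0$ is automatic because $G_{t-s}(0\,,y) = G_{t-s}(1\,,y) = 0$, so each of the three pieces vanishes at $\{0\,,1\}$; the work lies in controlling the Hölder modulus of each term separately.

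For $u_H$, the plan is to extend $u_0$ oddly across $\{0\,,1\}$ and then $2$-periodically to all of $\R$. Because $u_0(0) = u_0(1) = 0$, this extension lies in $C^\alpha(\R)$ with the same Hölder norm, and $\cG_t u_0$ coincides with the whole-line heat semigroup applied to this extension. Standard estimates for that semigroup give $u_H(t\,,\cdot) \in \C^\alpha_0$ uniformly in $t \ge 0$ for every $\alpha \in (0\,,1]$.

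For $u_D$, I will combine Proposition \ref{I:pr:Lk:bis} with \eqref{Lip(f)} to get $\sup_{(s,y) \in \cB(T)} \E(|b(u(s\,,y))|^k) < \infty$ for every $k \ge 1$ and $T > 0$, and then apply a standard increment estimate of the form
\[
	\int_{\cB(t)} |G_{t-s}(x_1\,,y) - G_{t-s}(x_2\,,y)|\,\d s\,\d y \apprle |x_1 - x_2|^{1-\varepsilon},
\]
uniformly in $x_1, x_2 \in [0\,,1]$ and $t \in [0\,,T]$, together with Minkowski, to obtain $\|u_D(t\,,x_1) - u_D(t\,,x_2)\|_k \apprle |x_1 - x_2|^{1-\varepsilon}$. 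For $u_S$, the Burkholder--Davis--Gundy inequality in Walsh's framework yields
\[
	\|u_S(t\,,x_1) - u_S(t\,,x_2)\|_k^2 \apprle k \int_{\cB(t)} [G_{t-s}(x_1\,,y) - G_{t-s}(x_2\,,y)]^2 \|\sigma(u(s\,,y))\|_k^2\,\d s\,\d y,
\]
and combining this with the classical bound
\[
	\int_{\cB(t)} [G_{t-s}(x_1\,,y) - G_{t-s}(x_2\,,y)]^2\,\d s\,\d y \apprle |x_1 - x_2|
\]
and the moment bound on $\sigma(u)$ (via the Lipschitz hypothesis and Proposition \ref{I:pr:Lk:bis}) gives $\|u_S(t\,,x_1) - u_S(t\,,x_2)\|_k \apprle k^{1/2}|x_1 - x_2|^{1/2}$. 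Kolmogorov's continuity criterion with $k$ taken arbitrarily large then places $u_D(t\,,\cdot) \in \bigcap_{\varepsilon>0}\C^{1-\varepsilon}_0$ and $u_S(t\,,\cdot) \in \bigcap_{\varepsilon>0}\C^{1/2-\varepsilon}_0$ almost surely, uniformly over $t \in [0\,,T]$.

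Putting the three bounds together: if $\alpha \in (0\,,1/2)$, then $u_H(t\,,\cdot) \in \C^\alpha_0$ while $u_D$ and $u_S$ are strictly smoother (choose $\varepsilon$ so small that $1/2 - \varepsilon > \alpha$), whence $u(t\,,\cdot) \in \C^\alpha_0$; if $\alpha \in [1/2\,,1]$, then $u_H(t\,,\cdot)$ is better than $\C^{1/2-\varepsilon}_0$ and the stochastic contribution caps the overall regularity at $\C^{1/2-\varepsilon}_0$. The main obstacle I anticipate is establishing the two Dirichlet-kernel increment estimates uniformly up to the boundary $\{0\,,1\}$; I plan to handle these either by direct manipulation of the series \eqref{G} or, more cleanly, via the odd-periodic reflection representation of $G$, which reduces the task to well-known increment estimates on the Gaussian density $p(t\,,x-y)$.
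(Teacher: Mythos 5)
Your decomposition into $u_H$, $u_D$, $u_S$ and the accompanying increment estimates are essentially the paper's ingredients, only packaged differently: the paper bundles $u_D + u_S$ into the single term $I(t\,,x)$ and treats it inside Proposition~\ref{I:pr:u(x)-u(x')} (whose proof splits $I$ back into a drift piece $\cT_1$ and a stochastic piece $\cT_2$, with the unrestricted-$k$ companion Proposition~\ref{I:pr:u(x)-u(x'):bis} supplying the Kolmogorov input), while Lemma~\ref{lem:Feller:2}, built on the representation $G_t(x,z) = \varphi_t(z-x) - \varphi_t(z+x)$, plays precisely the role of your odd-periodic reflection argument for $u_H$. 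Your heat-kernel increment bounds are Lemma~\ref{lem:A3}.

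There is, however, a genuine gap in the last step. The theorem asserts $\P\{u(t)\in\C^\alpha_0\text{ for all }t>0\}=1$: a single null set for all $t$ simultaneously. Your proposal proves only spatial moment increment bounds, and then claims the resulting Hölder regularity holds ``almost surely, uniformly over $t \in [0,T]$.'' That uniformity does not follow: applying Kolmogorov's criterion in $x$ alone at each fixed $t$ yields a $t$-dependent null set, and joint sample-path continuity of $u$ (which you do have for free in the Lipschitz case) is not enough to promote a $t$-by-$t$ statement to a uniform one --- for instance, $f(t\,,x)=t\,g(x)$ with $g$ continuous but not Hölder is jointly continuous with $f(0\,,\cdot)\in\C^\alpha$ while $f(t\,,\cdot)\notin\C^\alpha$ for every $t>0$. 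To obtain a single null set you need the two-parameter Kolmogorov theorem, which requires controlling temporal increments of $u_D$ and $u_S$ as well. This is exactly what Proposition~\ref{I:pr:u(T)-u(t):bis} supplies (via Lemmas~\ref{lem:A5} and~\ref{lem:A6} on $t$-increments of $G$), and it is used together with Proposition~\ref{I:pr:u(x)-u(x'):bis} in the paper's first sentence of the proof. Add the analogous temporal moment estimates for $u_D$ and $u_S$ and your argument closes; as it stands, it does not.
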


We begin by establishing some quantitive estimates that describe
the smoothness properties of the solution to \eqref{SHE}. Clearly,
this work prepares for Theorem \ref{th:OR}, since among
other things, Theorem \ref{th:OR} asserts that the solution to
\eqref{SHE} is H\"older continuous.

Let us first observe that \eqref{G_t} identifies every kernel $G_t$ with a linear operator
$\cG_t$ in the usual way.
It is well known---and easy to verify directly using \eqref{G_t}---that
$\{\cG_t\}_{t\ge0}$ is a semigroup
of linear operators that is bounded in $\L^\infty$ and
$(\cG_t\1)(x)\le 1$ for all $x\in[0\,,1]$ and $t\ge0$, where
$\1(x):=x$ for all $x\in[0\,,1]$.

The semigroup $\{\cG_t\}_{t\ge0}$ is said to be \emph{Feller
uniformly on a function class $\F$} if
\[
	\lim_{t\downarrow0} \sup_{f\in\F}\sup_{x\in[0,1]} \left|(\cG_tf)(x)-f(x)\right|=0.
\]
The following lemma shows that
$\{\cG_t\}_{t\ge0}$ is indeed  Feller uniformly on every bounded subset $\F$ of
$\C^\alpha_0$
for every $\alpha\in(0\,,1]$.
In fact, the following contains the quantitative improvement,
\[
	\sup_{f\in\F}\sup_{x\in[0,1]}|(\cG_tf)(x)-f(x)| =
	O\left(t^{\alpha/2}\right)\qquad
	\text{as $t\downarrow0$},
\]
for every bounded subset $\F$ of the Banach space $\C^\alpha_0$.

\begin{lemma}[A quantitative Feller property]\label{lem:Feller}
	Choose and fix $\alpha\in(0\,,1]$. Then
	\[
		\sup_{x\in[0,1]}\sup_{t\ge 0}\frac{\left|
		(\cG_tf)(x) - f(x)\right|}{t^{\alpha/2}}\apprle
		\|f\|_{\C^\alpha_0},
	\]
	uniformly for all $f\in\C^\alpha_0$.
\end{lemma}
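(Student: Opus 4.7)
The plan is to split the difference into a ``bulk'' piece controlled by the Gaussian envelope of $G_t$ together with the H\"older regularity of $f$, and a ``boundary'' piece controlled by the fact that $f$ vanishes on $\{0,1\}$ and that the killed kernel $G_t$ has total mass strictly less than one, with the defect governed by the hitting probability of $\{0,1\}$ for Brownian motion.

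\textbf{Step 1 (algebraic split).} Since $\cG_t$ is defined by integration against $G_t$, I will write
\[
   (\cG_tf)(x) - f(x) = \int_0^1 G_t(x,y)\bigl[f(y)-f(x)\bigr]\,\d y
   \; -\; f(x)\left[1 - \int_0^1 G_t(x,y)\,\d y\right].
\]
Call the first summand $\mathrm{I}(t,x)$ and the second $\mathrm{II}(t,x)$.

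\textbf{Step 2 (bulk term $\mathrm{I}$).} Using the Gaussian upper bound \eqref{boundG}, $G_t(x,y)\le p(t,x-y)$, together with $|f(y)-f(x)|\le\|f\|_{\C^\alpha_0}|y-x|^\alpha$, I obtain
\[
   |\mathrm{I}(t,x)| \;\le\; \|f\|_{\C^\alpha_0}\int_\R p(t\,,x-y)|y-x|^\alpha\,\d y
   \;=\; \|f\|_{\C^\alpha_0}\,\E|Z|^\alpha,
\]
where $Z\sim N(0,t)$, hence $\E|Z|^\alpha = c_\alpha t^{\alpha/2}$. This bounds $\mathrm{I}$ by a constant multiple of $\|f\|_{\C^\alpha_0} t^{\alpha/2}$ as required.

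\textbf{Step 3 (boundary term $\mathrm{II}$).} The probabilistic interpretation of $G_t$ as the transition density of Brownian motion killed on $\{0,1\}$ gives
\[
   1 - \int_0^1 G_t(x,y)\,\d y \;=\; \P_x\{T\le t\},
   \qquad T:=\inf\{s>0: B_s\in\{0,1\}\},
\]
where $B$ is standard BM from $x$. Write $T\le \tau_0\wedge\tau_1$ with $\tau_a$ the hitting time of $a$, so $\P_x\{T\le t\}\le \P_x\{\tau_0\le t\}+\P_x\{\tau_1\le t\}$. The reflection principle yields $\P_x\{\tau_0\le t\} = 2\Phi(-x/\sqrt t)$, and Mills' ratio gives the scale-free bound
\[
   \P_x\{\tau_0\le t\}\;\apprle\; \min\!\bigl(1\,,(\sqrt t/x)^\alpha\bigr),
   \quad
   \P_x\{\tau_1\le t\}\;\apprle\; \min\!\bigl(1\,,(\sqrt t/(1-x))^\alpha\bigr).
\]
Since $f(0)=f(1)=0$, I also have $|f(x)|\le\|f\|_{\C^\alpha_0}\, x^\alpha$ and $|f(x)|\le\|f\|_{\C^\alpha_0}(1-x)^\alpha$. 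Multiplying the appropriate pair of bounds and distinguishing the cases $\sqrt t\le x\wedge(1-x)$ versus $\sqrt t> x\wedge(1-x)$ shows that in every case each product is dominated by a constant multiple of $\|f\|_{\C^\alpha_0} t^{\alpha/2}$.

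\textbf{Step 4.} Combining Steps 2 and 3 yields the stated bound uniformly in $x\in[0,1]$ and $t\ge 0$. The main obstacle is Step 3, because near the boundary $\P_x\{T\le t\}$ tends to $1$ while $G_t$ loses substantial mass; the cancellation is supplied by the vanishing of $f$ at $\{0,1\}$ quantified via $\alpha$-H\"older regularity, which matches exactly the scaling of $\P_x\{\tau_0\le t\}$ obtained from the reflection principle.
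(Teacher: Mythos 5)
Your proof is correct and follows essentially the same route as the paper: the algebraic split into a bulk piece and a boundary piece, the Gaussian comparison $G_t\le p(t,\cdot)$ together with $\alpha$-H\"older regularity for the bulk term, and the interpretation of $1-\int_0^1 G_t(x,\cdot)$ as a Brownian exit probability paired with the vanishing of $f$ on $\{0,1\}$ for the boundary term. The only difference is cosmetic: the paper bounds the exit probability by a single Gaussian tail $\exp[-(\min(x,1-x))^2/(2t)]$ and absorbs it with the elementary inequality $y^\alpha \exp[-y^2/(2t)] \le c\,t^{\alpha/2}$, whereas you decompose into $\tau_0,\tau_1$, invoke reflection and Mills' ratio to get a power-law bound $\min(1,(\sqrt t/x)^\alpha)$, and close with a two-case comparison of $\sqrt t$ against the distance to the boundary; both calculations give the same $t^{\alpha/2}$ rate.
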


\begin{proof}
	Let us choose and fix $t>0$ and $x\in[0\,,1]$. Then
	\begin{align*}
		\left| (\cG_tf)(x) - f(x)\right| &= 
			\left| \int_0^1 G_t(x\,,z) f(z)\, \d z - f(x) \right| \\
		& = \left| \int_0^1 G_t(x\,,z) [f(z) - f(x)]\, \d z + f(x)
			\left(\int_0^1 G_t(x\,,z) \,\d z - 1 \right) \right| \\
		& \le   \int_0^1 G_t(x\,,z) \|f\|_{\C^\alpha_0}
			\vert z - x \vert^\alpha\, \d z + \vert f(x)\vert
			\left[ 1 - \int_0^1 G_t(x\,,z)\, \d z \right].
	\end{align*}
	Use the inequality \eqref{boundG} for the first term, and the fact that
	\[
		\vert f(x)\vert = \vert f(x) - f(0) \vert = \vert f(x) - f(1) \vert
		\le\|f\|_{\C^\alpha_0}(\min(x\,,1-x))^\alpha,
	\]
	for the second term, in order to see that
	\[
		\left| (\cG_tf)(x) - f(x)\right| \le c_0 \|f\|_{\C^\alpha_0}\,
		t^{\alpha/2} + \|f\|_{\C^\alpha_0}\, (\min(x,1-x))^\alpha
		\left( 1 - \int_0^1 G_t(x\,,z) \,\d z \right),
	\]
	where $c_0$ is a universal constant.
	
	  Let $B:=\{B_t\}_{t\ge0}$ denote a standard
	  1-dimensional Brownian motion, and consider the [a.s.\ finite] stopping time
	\[
			\tau := \inf\left\{t>0:\, B_t\in\{0\,,1\}\right\}.
	\]
	Thus, we may write, using standard notation: For all $x\in[0\,,1]$ and $t>0$,
	\begin{align*}
		1 - \int_0^1 G_t(x\,,z)\, \d z = \P_x \{\tau \le t \}
		    &= \P_0 \left\{ \sup_{s \in [0,1]} \vert B_s\vert \geq
		    \frac{\min(x\,,1-x)}{t^{1/2}}\right\} \\
		& \leq \exp\left[- \frac{(\min(x\,,1-x))^2}{2t} \right].
	\end{align*}
	Let $y := \min(x\,,1-x)$. By the last inequality,
	\begin{align*}
		y^\alpha \left( 1 - \int_0^1 G_t(x\,,z)\, \d z \right)
		\le y^\alpha \exp\left[- \frac{y^2}{2t}\right]
		\le t^{\alpha/2} \left(\frac{y^2}{t} \right)^{\alpha/2}
		\exp\left[- \frac{y^2}{2t}\right] \le c t^{\alpha/2}.
	\end{align*}
	We conclude that
	\[
		\left| (\cG_tf)(x) - f(x)\right| \le \tilde c_\alpha\,
		\|f\|_{\C^\alpha_0}\, t^{\alpha/2},
	\]
	where $\tilde c_\alpha = 1 + \sup_{y \ge 0} (y^{\alpha/2}\e^{-y/2})=
	1+(\alpha/\e)^{\alpha/2}.$
\end{proof}

\begin{remark}
	Suppose that $f(y) = \sum_{n=1}^\infty f_n \sin(n\pi y)$
	for $y\in[0\,,1]$, where
	the Fourier sine coefficients $\{f_n\}_{n=1}^\infty$ of $f$
	satisfy $\|f\|_{1,\alpha}:=
	\sum_{n=1}^\infty \vert f_n \vert n^\alpha < \infty$
	for some $\alpha\in(0\,,1]$. Then,
	it is not hard to see that $f\in\C_0^\alpha$
	and $\|f\|_{\C_0^\alpha}\le\pi^\alpha\|f\|_{1,\alpha}$. Indeed, $f$ vanishes
	on $\{0\,,1\}$, and for every distinct $x,y\in[0\,,1]$,
	\[
		\frac{|f(x)-f(y)|}{|x-y|^\alpha}
		\le \sum_{n=1}^\infty|f_n|  \cdot\frac{|\sin(n\pi x)-\sin(n\pi y)|}{|x-y|^\alpha}
		\le \pi^\alpha\|f\|_{1,\alpha}.
	\]
	In this particular case,  a simpler argument than the proof of Lemma \ref{lem:Feller} yields
	the slightly weaker bound,
	\[
		\sup_{t>0}\sup_{x\in[0,1]}\frac{\left| (\cG_tf)(x) - f(x)\right|}{t^{\alpha/2}}
		\apprle \|f\|_{1,\alpha}.
	\]
	Indeed, $(\cG_tf)(x) = \int_0^1 G_t(x\,,y) f(y) \,\d y
	= \sum_{n=1}^\infty f_n \sin(n\pi x) \exp(- n^2 \pi^2 t/2),$ and
	so
	\begin{align*}
		\left| (\cG_tf)(x) - f(x)\right|
			&\le  \left| \sum_{n=1}^\infty f_n \sin(n\pi x)
			\left(1 - \exp(- n^2 \pi^2 t/2) \right) \right| \\
		&\le \frac{\pi^\alpha t^{\alpha/2}}{2^{\alpha/2}}
			\sum_{n=1}^\infty |f_n| n^\alpha
		   	\left|\frac{1 - \exp(- n^2 \pi^2 t/2)}{%
			(n^2\pi^2t/2)^{\alpha/2}}  \right| \\
		&\le \frac{C_\alpha\pi^\alpha}{2^{\alpha/2}}\cdot\|f\|_{1,\alpha}\,
			 t^{\alpha/2},
	\end{align*}
	where $C_\alpha = \sup_{y > 0} y^{-\alpha/2} (1 - \exp(-y)) < \infty$.
\end{remark}

The next result is also a deterministic lemma. Among other things,
it asserts that every $\cG_t$ maps each $\C^\alpha_0$ boundedly to
$\C^\alpha_0$.

\begin{lemma}\label{lem:Feller:2}
	Choose and fix an arbitrary $\alpha\in(0\,,1]$.
	Then,
	\[
		\sup_{0\le x<x'\le 1}\sup_{t\ge 0}
		\frac{|(\cG_tf)(x) - (\cG_tf)(x') |}{|x-x'|^{\alpha}}
		\apprle \|f\|_{\C^\alpha_0},
	\]
	uniformly for all $f\in\C^\alpha_0$.
\end{lemma}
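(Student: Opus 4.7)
The plan is to exploit the method of images to realize $\cG_t$ as the free heat semigroup on $\R$ acting on a H\"older-regular extension of $f$, so that the desired H\"older modulus for $\cG_t f$ transfers directly from that of the extension, with no need for pointwise gradient estimates on $G_t$ and no need to split into cases on the relative sizes of $|x-x'|$ and $\sqrt t$.

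Concretely, I would extend $f \in \C^\alpha_0$ to $\tilde{f}:\R\to\R$ by odd reflection at the integers and periodicity of period~$2$: set $\tilde{f}(y)=f(y)$ for $y\in[0\,,1]$, $\tilde{f}(-y)=-\tilde{f}(y)$, and $\tilde{f}(y+2)=\tilde{f}(y)$. Because $f(0)=f(1)=0$, $\tilde f$ is well defined, continuous on $\R$, and vanishes at every integer. The key auxiliary claim is that
\[
	|\tilde{f}(u)-\tilde{f}(v)|\le 2\|f\|_{\C^\alpha_0}|u-v|^\alpha\qquad\text{for all }u,v\in\R.
\]
Within any unit interval $[k\,,k+1]$, $\tilde{f}$ is $\pm f$ precomposed with an affine isometry onto $[0\,,1]$, so the bound holds with constant $\|f\|_{\C^\alpha_0}$. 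For $u<v$ lying in adjacent unit intervals separated by an integer $k$, I split at $k$ and use $\tilde f(k)=0$ together with the within-interval estimate to obtain
\[
	|\tilde f(u)-\tilde f(v)|\le \|f\|_{\C^\alpha_0}\bigl(|u-k|^\alpha+|v-k|^\alpha\bigr)\le 2\|f\|_{\C^\alpha_0}|u-v|^\alpha,
\]
where the last inequality uses $a^\alpha+b^\alpha\le 2(a+b)^\alpha$ for $\alpha\in(0\,,1]$ and $a,b\ge 0$, together with $|u-k|+|v-k|=|u-v|$ in this configuration. For non-adjacent intervals, $|u-v|\ge 1$ and $\sup_{w\in\R}|\tilde f(w)|=\sup_{y\in[0,1]}|f(y)|\le\|f\|_{\C^\alpha_0}$ (since $|f(y)|=|f(y)-f(0)|\le\|f\|_{\C^\alpha_0}y^\alpha\le\|f\|_{\C^\alpha_0}$), whence the claim is immediate.

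Finally, I would invoke the classical method-of-images identity
\[
	G_t(x\,,y)=\sum_{k\in\Z}\bigl[p(t\,,x-y-2k)-p(t\,,x+y-2k)\bigr],
\]
where $p(t\,,\cdot)$ denotes the standard $N(0\,,t)$ density. Substituting this into $(\cG_t f)(x)=\int_0^1 G_t(x\,,y)f(y)\,\d y$, merging the two sums by the change of variables $z=-y$ in the second, and then unfolding over $k$ via the $2$-periodicity of $\tilde f$ yields
\[
	(\cG_tf)(x)=\int_\R p(t\,,x-w)\,\tilde f(w)\,\d w=\E\bigl[\tilde f(x+B_t)\bigr],
\]
valid for every $x\in[0\,,1]$ and $t>0$, where $B$ is a standard real-valued Brownian motion. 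Coupling both sides to the \emph{same} realization of $B$ and applying the H\"older bound on $\tilde f$ then gives
\[
	|(\cG_tf)(x)-(\cG_tf)(x')|\le \E\bigl|\tilde f(x+B_t)-\tilde f(x'+B_t)\bigr|\le 2\|f\|_{\C^\alpha_0}|x-x'|^\alpha,
\]
uniformly in $t\ge 0$, which is the asserted inequality. The only real work is the H\"older verification for $\tilde f$, which crucially uses the Dirichlet condition $f(0)=f(1)=0$; without it the odd reflection would introduce jump discontinuities at the integers and the argument would collapse.
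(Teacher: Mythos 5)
Your proof is correct. It starts from the same classical ingredient as the paper's argument --- the method-of-images representation of $G_t$ in terms of the Gaussian kernel --- but deploys it differently. The paper follows Bally, Millet and Sanz--Sol\'e: it expands the difference $(\cG_tf)(x)-(\cG_tf)(y)$ into six integrals $\cI_1,\dots,\cI_6$, two interior and four boundary, and bounds each piece separately using the probability-density property of the Gaussian summation kernel and the Dirichlet condition $f(0)=f(1)=0$ to absorb the boundary contributions. You instead push the reflection onto $f$ once and for all, producing the odd, $2$-periodic extension $\tilde f$, verify that $\tilde f$ inherits the H\"older seminorm up to a factor of $2$ (your case analysis --- within a cell, across one integer via $\tilde f(k)=0$ and the subadditivity $a^\alpha+b^\alpha\le 2(a+b)^\alpha$, and the trivial far case $|u-v|\ge1$ using $\|\tilde f\|_\infty\le\|f\|_{\C^\alpha_0}$ --- is complete and correct), and then observe that $(\cG_tf)(x)=\E[\tilde f(x+B_t)]$, at which point the conclusion is immediate by coupling. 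This route buys you a cleaner bookkeeping with no case split on boundary strips, a slightly sharper constant ($2$ versus $6$), and, as a reusable byproduct, the general principle that $\cG_t$ is a contraction on $\C^\alpha_0$ once the Dirichlet data is built into the extension. Both proofs need the Dirichlet condition at exactly the same point; in the paper it lets one replace $f(z)$ by $f(z)-f(0)$ or $f(z)-f(1)$ in the boundary integrals, while in yours it makes $\tilde f$ continuous at the integers. One small note: the formula \eqref{G:phi} in the paper is written with a period-$1$ kernel $\varphi_t$, whereas the correct method-of-images kernel for $[0,1]$ with Dirichlet boundary conditions has period $2$, as in your displayed formula $G_t(x,y)=\sum_{k\in\Z}[p(t,x-y-2k)-p(t,x+y-2k)]$; your version is the accurate one (one can check it against the sine eigenfunction expansion \eqref{G} by Poisson summation), and the unfolding step in your argument genuinely requires the period to match the period of $\tilde f$, so it is good that you used the right one.
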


\begin{proof}
	It is well known that the Green's function $G$ can be represented as follows:
	\begin{equation}\label{G:phi}
		G_t(x\,,z) = \varphi_t(z-x) - \varphi_t(z+x)
		\qquad\text{for all $t>0$ and $x,z\in(0\,,1)$,}
	\end{equation}
	where
	\[
		\varphi_t(x) := \frac{1}{\sqrt{2\pi t}} \sum_{n=-\infty}^\infty
		\exp\left[- \frac{(x-n)^2}{2t} \right].
	\]
	See, for example, Bally et al \cite[proof of Lemma A2]{BallyMilletSanz}.
	
	Choose and fix $x,y\in[0\,,1]$
	such that
	\[
		h:=y-x>0.
	\]
	Thanks to \eqref{G:phi},
	\begin{align*}
	   &(\cG_tf)(x) - (\cG_tf)(y)\\
		 &\qquad= \int_0^1 \left[G_t(x\,,z) - G_t(y\,,z)\right] f(z)\, \d z \\
		   &\qquad= \int_0^1 \left[ \varphi_t(z-x) - \varphi_t(z-h-x)\right] f(z)\, \d z -
		   \int_0^1 \left[ \varphi_t(z+x) - \varphi_t(z+h+x)\right] f(z)\, \d z,
	\end{align*}
	for all $t>0$.
	We follow Bally et al \cite{BallyMilletSanz}
	and organize the preceding as follows:
	\[
		(\cG_tf)(x) - (\cG_tf)(y)	= \cI_1 - \cI_2 + \cI_3 - \cI_4 - \cI_5 + \cI_6,
	\]
	where:
	\begin{gather*}
		\cI_1 = \int_0^{1-h} \varphi_t(z-x) [f(z) - f(z+h)]\, \d z;\quad
		\cI_2 = \int_h^1 \varphi_t(z+x) [f(z) - f(z-h)]\, \d z;\\
		\cI_3 =\int_{1-h}^1 \varphi_t(z-x) f(z)\, \d z;\quad
		\cI_4 =\int_{-h}^0 \varphi_t(z-x) f(z+h) \,\d z;\quad
		\cI_5 = \int_0^h \varphi_t(z+x) f(z) \,\d z;\\
		\cI_6 =\int_1^{1+h} \varphi_t(z+x) f(z-h)\, \d z.
	\end{gather*}
	Since $f \in\C^\alpha_0$ and $\varphi_t$ is a probability density,
	$|\cI_\ell|\le h^\alpha\|f\|_{\C^\alpha_0}$ for $\ell=1,2$.
	Moreover, we can replace $f(z)$ by $f(z) - f(1)$
	in $\cI_3$ and $\cI_6$, and by $f(z) - f(0)$ in $\cI_4$ and $\cI_5$,
	in order to obtain
	\begin{align*}
		&\vert(\cG_tf)(x) - (\cG_tf)(y)\vert \\
		&\qquad\le 2h^\alpha\|f\|_{\C^\alpha_0}
			+  \|f\|_{\C^\alpha_0} \Bigg[ \int_{1-h}^1
			\varphi_t(z-x) \vert z - 1\vert^\alpha\, \d z + \int_{-h}^0
			\varphi_t(z-x) \vert z+h\vert ^\alpha\,\d z  \\
		&\qquad\hskip1.7in + \int_0^h \varphi_t(z+x) \vert z \vert^\alpha
			\,\d z + \int_1^{1+h} \varphi_t(z+x) \vert z - h - 1\vert^\alpha\, \d z \Bigg].
	\end{align*}
	Because the absolute values are all
	bounded above by $h$; the preceding quantity is at most
	$6h^\alpha\|f\|_{\C^\alpha_0}.$
	This completes the proof.
\end{proof}

We now begin to use the preceding two analytic results about the Dirichlet Laplacian,
acting on $\C^\alpha_0$, in order to derive smoothness results for the solution
to \eqref{SHE}. First, let us present a result about smoothness in the space variable.

Throughout, we write
\[
	u(t\,,x) = (\cG_t u_0)(x) + I(t\,,x),
\]
where
\[
	I(t\,,x) = \int_{\cB(t)}  G_{t-s}(x\,,y)	b(u(s\,,y))\,\d s\,\d y
	+ \int_{\cB(t)}  G_{t-s}(x\,,y) \sigma(u(s\,,y))\,\xi(\d s\,\d y),
\]
and $\cB(t)$ was defined in \eqref{R(t)}.
It might help to recall that \eqref{LipLip} is in place throughout the section.

\begin{proposition}\label{I:pr:u(x)-u(x')}
	Choose and fix $\alpha\in(0\,,1]$.
	There exists a finite universal constant $A$---independent of
	$(b\,,\sigma)$---such that
	\[
		\sup_{0\le x< x'\leq 1}
		\E\left( \left|\frac{I(t\,,x) - I(t\,,x')}{\vert x'-x\vert^{1/2}}
		\right|^k\right) \le A^k
		\left(	 k^{k/2} \cM_1^k + k^{k/2} \cM_2^k\, \cM_3^k\, \e^{Ak\Lip(b)t}\right),
	\]
	and
	\[
		\sup_{0\le x< x'\leq 1}
		\E\left( \left|\frac{u(t\,,x) - u(t\,,x')}{\vert x'-x\vert ^{\alpha\wedge(1/2)}}
		\right|^k\right) \le A^k
		\left( \|u_0\|_{\C^\alpha_0}^k +
		 k^{k/2} \cM_1^k + k^{k/2} \cM_2^k\, \cM_3^k\, \e^{Ak\Lip(b)t}\right),
	\]
	uniformly for all $u_0\in\C^\alpha_0$,
	$t\ge 0$, and $k\in[2\,, \sqrt{\Lip(b)}/\Lip(\sigma)^2]$,
	where:
	\[
		\cM_1 := c(b)+c(\sigma);\quad
		\cM_2 := \Lip(b)+\Lip(\sigma);\text{ and}\quad
		\cM_3 :=\|u_0\|_{\L^\infty} + \frac{c(b)}{\Lip(b)} +
		\frac{c(\sigma)}{\Lip(\sigma)}.
	\]
\end{proposition}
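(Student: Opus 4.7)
The plan is to split $I(t\,,x)-I(t\,,x')$ into its drift and noise contributions, $\Delta_b+\Delta_\sigma$, where
\begin{align*}
	\Delta_b &:= \int_{\cB(t)} \left[G_{t-s}(x\,,y)-G_{t-s}(x'\,,y)\right] b(u(s\,,y))\,\d s\,\d y,\\
	\Delta_\sigma &:= \int_{\cB(t)} \left[G_{t-s}(x\,,y)-G_{t-s}(x'\,,y)\right] \sigma(u(s\,,y))\,\xi(\d s\,\d y),
\end{align*}
and bound each in $L^k(\Omega)$ using the linear-growth bound \eqref{Lip(f)} together with Proposition \ref{I:pr:Lk}. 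Taking $k$-th roots in Proposition \ref{I:pr:Lk} yields $\|u(s\,,y)\|_k\apprle\cM_3\e^{A\Lip(b)t}$ uniformly in $s\in[0\,,t]$ and in $k$ within the admissible range, whence $\|b(u(s\,,y))\|_k\apprle\cM_1+\cM_2\cM_3\e^{A\Lip(b)t}$ and the same estimate with $b$ replaced by $\sigma$.

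For $\Delta_b$, Minkowski's inequality for integrals gives
\[
	\|\Delta_b\|_k\le\int_{\cB(t)}\left|G_{t-s}(x\,,y)-G_{t-s}(x'\,,y)\right|\cdot\|b(u(s\,,y))\|_k\,\d s\,\d y,
\]
which, combined with the standard kernel estimate $\int_{\cB(t)}|G_{t-s}(x\,,y)-G_{t-s}(x'\,,y)|\,\d s\,\d y\apprle|x-x'|$, yields $\|\Delta_b\|_k\apprle(\cM_1+\cM_2\cM_3\e^{A\Lip(b)t})|x-x'|$. For $\Delta_\sigma$, the BDG inequality in the form invoked in the proof of Proposition \ref{I:pr:Lk} (cf.\ \cite[Proposition 4.4]{Kh:CBMS}) gives
\[
	\|\Delta_\sigma\|_k^2\le 4k\int_{\cB(t)}\left(G_{t-s}(x\,,y)-G_{t-s}(x'\,,y)\right)^2\|\sigma(u(s\,,y))\|_k^2\,\d s\,\d y,
\]
which, combined with the $L^2$-kernel estimate $\int_{\cB(t)}(G_{t-s}(x\,,y)-G_{t-s}(x'\,,y))^2\,\d s\,\d y\apprle|x-x'|$, delivers $\|\Delta_\sigma\|_k\apprle k^{1/2}(\cM_1+\cM_2\cM_3\e^{A\Lip(b)t})|x-x'|^{1/2}$. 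Since $|x-x'|\le 1$, the exponent $1$ for the drift can be relaxed to $1/2$, and the elementary inequality $(a+b)^k\le 2^{k-1}(a^k+b^k)$ separates the $\cM_1^k$ and $\cM_2^k\cM_3^k\e^{Ak\Lip(b)t}$ contributions to produce the first asserted bound, the $k^{k/2}$ arising from raising $k^{1/2}$ to the $k$-th power.

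For the $u$-estimate, write $u(t\,,x)-u(t\,,x')=[(\cG_tu_0)(x)-(\cG_tu_0)(x')]+[I(t\,,x)-I(t\,,x')]$. Lemma \ref{lem:Feller:2} bounds the first bracket deterministically by a constant multiple of $\|u_0\|_{\C^\alpha_0}|x-x'|^\alpha$; combining with the $I$-bound just established and using $|x-x'|\le 1$ to replace both $\alpha$ and $1/2$ by their minimum $\alpha\wedge(1/2)$ yields the second assertion. The principal technical obstacle is verifying the two heat-kernel inequalities on $\cB(t)$; both follow classically from the Gaussian representation \eqref{G:phi} together with the pointwise bound \eqref{boundG}, by splitting the time integral at the scale $s\asymp|x-x'|^2$ and invoking the elementary whole-line bounds $\int_\R|\partial_z\varphi_s(z)|\,\d z\apprle s^{-1/2}$ and $\int_\R\varphi_s(z)^2\,\d z\apprle s^{-1/2}$. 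Once these kernel inequalities are secured, the argument above is routine bookkeeping.
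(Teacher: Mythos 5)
Your proposal follows essentially the same route as the paper: split $I(t,x)-I(t,x')$ into drift and noise contributions, use the linear-growth bound \eqref{Lip(f)} together with the $L^k$-moment bound from Proposition \ref{I:pr:Lk} (the paper filters this through the weighted norm $\cN_{\beta,k}$ and inequality \eqref{I:N} with $\beta=16c^4\Lip(b)$, but your direct invocation is mathematically equivalent), apply Minkowski's integral inequality for the drift term and the BDG inequality with factor $4k$ for the noise term, then appeal to the $L^1$- and $L^2$-kernel increment estimates and Lemma \ref{lem:Feller:2} for the semigroup part, finishing with the elementary power inequality $(a+b)^k\le 2^{k-1}(a^k+b^k)$.

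One small inaccuracy: the time-integrated $L^1$-kernel increment bound is not $\apprle|x-x'|$ as you claim, but $\apprle|x-x'|\log_+(1/|x-x'|)$ (this is the paper's Lemma \ref{lem:A3} with $\theta=1$, and the log factor is genuinely present, as one sees from the dyadic splitting at $n\asymp 1/|x-x'|$ in the Fourier side or from the splitting at $s\asymp|x-x'|^2$ on the Gaussian side you suggest). This is harmless for your argument since you then relax to $|x-x'|^{1/2}$ in any case, and $a\log_+(1/a)\apprle a^{1/2}$ for $a\in[0,1]$ --- precisely the step the paper takes in \eqref{II:T1} --- but you should not label the logarithm-free version a ``standard kernel estimate.'' With this correction the proof is complete and matches the paper's.
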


\begin{remark}\label{rem:u(x)-u(x')}
	Stated in other words, the above asserts
	that the moments of order $\le \sqrt{\Lip(b)}/\Lip(\sigma)^2$
	behave as those of a Gaussian random variable. Of course, such a
	statement can have nontrivial content only when $\Lip(b)\gg 4[\Lip(\sigma)]^4$.
\end{remark}

\begin{proof}
	Thanks to \eqref{I:mild}, we may write
	\begin{equation}\label{u-u:G-G:etc}
		\|u(t\,,x) - u(t\,,x') \|_k \le \left| (\cG_tu_0)(x) - (\cG_tu_0)(x')\right| +
		\| I(t\,,x) - I(t\,,x') \|_k ,
	\end{equation}
	and
	\begin{equation}\label{ITT}
			\| I(t\,,x) - I(t\,,x') \|_k \leq \|\cT_1\|_k + \|\cT_2\|_k,
		\end{equation}
	where
	\begin{align*}
		\cT_1 &:= \int_{\cB(t)} \left[ G_{t-s}(x\,,y)-G_{t-s}(x',y)\right]
			b(u(s\,,y))\,\d s\,\d y,\\
		\cT_2 &:= \int_{\cB(t)} \left[ G_{t-s}(x\,,y)-G_{t-s}(x',y)\right]
			\sigma(u(s\,,y))\,\xi(\d s\,\d y).
	\end{align*}
	
	Lemma \ref{lem:Feller:2} estimates the first term on the right-hand side of \eqref{u-u:G-G:etc}
	first as follows:
	\begin{equation}\label{II:G-G}
		\left| (\cG_tu_0)(x) - (\cG_tu_0)(x')\right|
		\apprle\|u_0\|_{\C^\alpha_0}\cdot |x-x'|^{\alpha},
	\end{equation}
	uniformly for all $t\ge 0$ and $x,x'\in[0\,,1]$.
	
	Next we estimate $\cT_1$. First,
	an appeal to \eqref{Lip(f)} yields
	\[
		\|\cT_1\|_k\le \int_{\cB(t)} \left| G_{t-s}(x\,,y)-G_{t-s}(x',y)\right|
		\left( c(b)+ \Lip(b)\|u(s\,,y)\|_k\right)\d s\,\d y.
	\]
	Lemma \ref{lem:A3} ensures that, for all $x,x'\in[0\,,1]$,
	\[
		\sup_{t>0}
		\int_{\cB(t)}\left| G_s(x\,,y)-G_s(x',y)\right|\d s\,\d y\apprle
		|x-x'|\log_+\left( \frac{1}{|x-x'|}\right),
	\]
	where $\log_+(a) := \log(\e\vee a)$ for all $a\in\R$.
	Furthermore,
	\begin{align*}
		\int_{\cB(t)}\left| G_{t-s}(x\,,y)-G_{t-s}(x',y)\right|&\cdot
			\|u(s\,,y)\|_k\,\d s\,\d y\\
		&\le \e^{\beta t}\cN_{\beta,k}(u)\sup_{t>0}
			\int_{\cB(t)}\left| G_{t-s}(x\,,y)-G_{t-s}(x',y)\right|\d s\,\d y\\
		&\apprle\e^{\beta t}\cN_{\beta,k}(u)\cdot |x-x'|\log_+
			\left(\frac{1}{|x-x'|}\right),
	\end{align*}
	thanks to a second appeal to Lemma \ref{lem:A3}.
	[The norm $\cN_{\beta,k}$ was defined in \eqref{N}.] It follows that
	\begin{equation}\label{II:T1}\begin{split}
		\|\cT_1\|_k &\le \left[
			c(b)+\Lip(b)\e^{\beta t}\cN_{\beta,k}(u)\right]
			\cdot |x-x'|\log_+\left(\frac{1}{|x-x'|}\right)\\
		&\apprle\left[
			c(b)+\Lip(b)\e^{\beta t}\cN_{\beta,k}(u)\right]\cdot
			|x-x'|^{1/2}.
	\end{split}\end{equation}
	The last line follows from the elementary fact that
	$|a|\log_+(a)\apprle|a|^{1/2}$ for all $a\in[-1\,,1]$,
	and the above inequality yields the desired bound for the $L^k(\Omega)$-norm of $\cT_1$.
	
	For the corresponding estimate for $\cT_2$, use the BDG inequality as follows:
	\[
		\|\cT_2\|_k^2 \le 4k\int_{\cB(t)}\left| G_{t-s}(x\,,y)-G_{t-s}(x',y)\right|^2
		\left( c(\sigma) + \Lip(\sigma)\|u(s\,,y)\|_k\right)^2\,\d s\,\d y;
	\]
	see the proof of Proposition \ref{I:pr:Lk} for more details on the
	justification of this sort of inequality. Now, Lemma \ref{lem:A3} below
	tells us that
	\[
		\sup_{t> 0}
		\int_{\cB(t)}\left| G_s(x\,,y)-G_s(x',y)\right|^2\,\d s\,\d y
		\apprle |x-x'|.
	\]
	Also,
	\begin{align*}
		&\int_{\cB(t)}\left| G_{t-s}(x\,,y)-G_{t-s}(x',y)\right|^2
			\|u(s\,,y)\|_k^2\,\d s\,\d y\\
		&\hskip2.5in\le \e^{2\beta t}\left[\cN_{\beta,k}(u)\right]^2\sup_{t>0}
			\int_{\cB(t)}\left| G_s(x\,,y)-G_s(x',y)\right|^2\,\d s\,\d y\\
		&\hskip2.5in\apprle \e^{2\beta t}\left[\cN_{\beta,k}(u)\right]^2\cdot|x-x'|.
	\end{align*}
	Therefore, it follows from the preceding development that
	\begin{equation}\label{II:T2}\begin{split}
		\|\cT_2\|_k &\apprle \sqrt{k}\left[c(\sigma) + \Lip(\sigma)
			\e^{\beta t}\cN_{\beta,k}(u)\right]\cdot |x-x'|^{1/2}.
	\end{split}\end{equation}
	This is the desired estimate of $\cT_2$.
	
	We can now combine \eqref{ITT}, \eqref{II:T1} and \eqref{II:T2}
	in order to see that
	\begin{align*}
		\frac{\|I(t\,,x) - I(t\,,x') \|_k}{|x-x'|^{1/2}} &\le
			\left[c(b)+\sqrt{k}\, c(\sigma)\right]
			+\e^{\beta t}\left[\Lip(b)+
			\sqrt{k}\, \Lip(\sigma)\right]\cN_{\beta,k}(u) \\
		&\le \sqrt{k}\,\cM_1 +  \sqrt{k}\,\e^{\beta t}\cM_2\cN_{\beta,k}(u).
	\end{align*}
	Together with \eqref{u-u:G-G:etc} and \eqref{II:G-G},
	\begin{align}\nonumber
		\frac{\|u(t\,,x) - u(t\,,x') \|_k}{|x-x'|^{\alpha\wedge (1/2)}}
		&\apprle \|u_0\|_{\C^\alpha_0}
			+ \left[c(b)+\sqrt{k}\, c(\sigma)\right]
			+\e^{\beta t}\left[\Lip(b)+
			\sqrt{k}\,\Lip(\sigma)\right]\cN_{\beta,k}(u)\\
		&\le \|u_0\|_{\C^\alpha_0} +
			\sqrt{k}\,\cM_1 +  \sqrt{k}\,\e^{\beta t}\cM_2\cN_{\beta,k}(u),
			\label{rd_e3.17}
	\end{align}
	uniformly for all $t\ge 0$, $\beta>0$, distinct $x,x'\in[0\,,1]$, and $k\ge 2$.
	We apply the preceding with the particular choice,
	\[
		\beta := 16c^4\Lip(b),
	\]
	where $c\in(0\,,\infty)$ is the same universal constant that arose
	in \eqref{I:N}. Proposition \ref{I:pr:Lk} (see in particular the
	equivalent formulation \eqref{I:N}) now tells us that
	\[
		\frac{\|I(t\,,x) - I(t\,,x') \|_k}{|x-x'|^{1/2}}
		\apprle \sqrt{k}\, \cM_1 +\sqrt{k} \,\e^{16c^4\Lip(b) t}\cM_2\cM_3,
	\]
	and
	\[
		\frac{\|u(t\,,x) - u(t\,,x') \|_k}{|x-x'|^{\alpha\wedge (1/2)}}
		\apprle\|u_0\|_{\C^\alpha_0}
		+\sqrt{k}\, \cM_1 +\sqrt{k} \,\e^{16c^4\Lip(b) t}\cM_2\cM_3,
	\]
	uniformly for all $t\ge 0$, distinct $x,x'\in[0\,,1]$,
	and $k\in[2\,, \sqrt{\Lip(b)}/\Lip(\sigma)^2]$. This is equivalent to
	the statement of the proposition.
\end{proof}

Proposition \ref{I:pr:u(x)-u(x')} has a counterpart when $k>\sqrt{\Lip(b)}/\Lip(\sigma)^2$.
We will need only the following crude version of such a counterpart.

\begin{proposition}\label{I:pr:u(x)-u(x'):bis}
	If $u_0\in\C^\alpha_0$
	for some $\alpha\in(0\,,1]$, then
	\[
		\sup_{t\geq 0}\sup_{0\le x< x'\leq 1}
		\E\left( \left|\frac{u(t\,,x) - u(t\,,x')}{(x'-x)^{\alpha\wedge(1/2)}}
		\right|^k\right) <\infty
		\qquad\text{for all $k\ge2$}.
	\]
\end{proposition}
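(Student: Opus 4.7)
The plan is to mirror the proof of Proposition \ref{I:pr:u(x)-u(x')}, replacing the exponential-weight parameter $\beta$ from the proof of Proposition \ref{I:pr:Lk} by the one used in the proof of Proposition \ref{I:pr:Lk:bis}, so that all $k\ge 2$ can be handled. For $k\in[2\,,\sqrt{\Lip(b)}/\Lip(\sigma)^2]$ the claim is already implied by Proposition \ref{I:pr:u(x)-u(x')}, so assume $k>\sqrt{\Lip(b)}/\Lip(\sigma)^2$.

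First, decompose $u(t,x)-u(t,x')=[(\cG_t u_0)(x) - (\cG_t u_0)(x')] + \cT_1 + \cT_2$ as in the proof of Proposition \ref{I:pr:u(x)-u(x')}, where $\cT_1$ is the deterministic drift integral and $\cT_2$ is the Walsh stochastic integral, each tested against $G_{t-s}(x\,,\cdot) - G_{t-s}(x'\,,\cdot)$. By Lemma \ref{lem:Feller:2}, the deterministic difference is bounded by a universal constant times $\|u_0\|_{\C^\alpha_0}|x-x'|^\alpha$, uniformly in $t\ge 0$. The BDG argument for $\cT_2$, the linear-growth bound \eqref{Lip(f)} for $b$ and $\sigma$, and the Green-function estimates in Lemma \ref{lem:A3} deliver \eqref{II:T1} and \eqref{II:T2} verbatim; combining these yields the key inequality \eqref{rd_e3.17}, namely
\[
\frac{\|u(t,x) - u(t,x')\|_k}{|x-x'|^{\alpha\wedge(1/2)}} \apprle \|u_0\|_{\C^\alpha_0} + \sqrt{k}\,\cM_1 + \sqrt{k}\,e^{\beta t}\,\cM_2\,\cN_{\beta,k}(u),
\]
valid for every $\beta>0$.

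Finally, choose $\beta := 16c^4 k^2[\Lip(\sigma)]^4$, exactly as in the proof of Proposition \ref{I:pr:Lk:bis}. For this choice one has $L_{\beta,k} = 1/2$ in the notation of \eqref{I:NKLN}, so the Picard scheme satisfies $\cN_{\beta,k}(u_{n+1}) \le K_{\beta,k} + \tfrac12\cN_{\beta,k}(u_n)$, whence $\cN_{\beta,k}(u)\apprle K_{\beta,k}<\infty$ by induction and Fatou's lemma. Substituting this into the display above produces a finite upper bound on $\|u(t,x)-u(t,x')\|_k/|x-x'|^{\alpha\wedge(1/2)}$ for each $t$, $k\ge 2$, and distinct $x,x'\in[0\,,1]$, which yields the proposition.

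The main routine obstacle is bookkeeping: one has to carry along the dependence on $b$, $\sigma$, $u_0$, and $k$ throughout the transition from the range $k\le \sqrt{\Lip(b)}/\Lip(\sigma)^2$ to all $k\ge 2$, and one must apply the absorption $|a|\log_+(1/|a|)\apprle|a|^{1/2}$ on $|a|\le 1$ to convert the logarithmic $\L^1$-bound of Lemma \ref{lem:A3} for $\cT_1$ into the desired $|x-x'|^{1/2}$ form. Neither presents a conceptual difficulty, which is exactly why this version is marked as \emph{crude}: no attempt is made to track the sharp $k$-dependence that was tracked in Proposition \ref{I:pr:u(x)-u(x')}.
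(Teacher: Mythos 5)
Your proposal follows exactly the paper's own route: decompose as in Proposition~\ref{I:pr:u(x)-u(x')}, carry the argument through to the key display~\eqref{rd_e3.17} (which holds for every $\beta>0$), and then choose $\beta := 16c^4 k^2[\Lip(\sigma)]^4$ so that $L_{\beta,k}=\tfrac12$ and Proposition~\ref{I:pr:Lk:bis}'s bound $\cN_{\beta,k}(u)\apprle K_{\beta,k}$ applies for all $k\ge 2$. This is precisely what the paper does, with the same choice of~$\beta$.

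One small observation, equally present in your write-up and the paper's: after inserting $\cN_{\beta,k}(u)\apprle K_{\beta,k}$ into~\eqref{rd_e3.17} one is left with a factor $\e^{\beta t}$, so the resulting bound is finite for each fixed $t$ (or uniformly over $t\in[0\,,T_0]$ for any fixed $T_0$) but grows exponentially in $t$; the literal $\sup_{t\ge 0}$ in the statement of Proposition~\ref{I:pr:u(x)-u(x'):bis} is not obtained by this argument. You implicitly acknowledge this by writing ``for each $t$'' rather than claiming a time-uniform bound, and the downstream use of the proposition (Theorem~\ref{th:OR} and Theorem~\ref{I:th:Lk}) only needs locally-uniform control in $t$, so nothing is lost; but it is worth noting that the $\sup_{t\ge0}$ as written is a slight overclaim in the statement, not something either proof delivers.
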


\begin{proof}
	We merely adjust the proof of Proposition \ref{I:pr:u(x)-u(x')}
	by  using in \eqref{rd_e3.17} the result
	of Proposition \ref{I:pr:Lk:bis}, instead of Proposition \ref{I:pr:Lk},
	in order to bound $\cN_{\beta,k}(u)$. More concretely, we use
	the same argument that we used to prove Proposition \ref{I:pr:u(x)-u(x')},
	but with $\beta= 16 c^4k^2[\Lip(\sigma)]^4$
	instead of $\beta=16c^4\Lip(b)$ in that proof. Then we follow
	through the remainder of the derivation, making only small arithmetic
	adjustments for the new choice of $\beta$.
\end{proof}

Next we derive an {\it a priori} smoothness estimate for the temporal
behavior of the solution to \eqref{SHE}.

\begin{proposition}\label{I:pr:u(T)-u(t)}
 Fix $T_0 >0$.	Choose and fix some $\alpha\in(0\,,1]$,
	and define $\mu:=\min(\frac14\,,\frac12 \alpha)$.
	Then there exists a finite constant $A$---independent of
	$(b\,,\sigma)$---such that
	\[
		\sup_{x\in[0,1]}	\E\left( \left|\frac{u(T\,,x) - u(t\,,x)}{(T-t)^\mu}
		\right|^k\right) \le A^k
		\left( \|u_0\|_{\C^\alpha_0}^k +
		 k^{k/2}\left[\cM_1^k + \cM_2^k\cM_3^k\e^{Ak\Lip(b)(T)}\right]\right),
	\]
	for all $u_0\in\C^\alpha_0$, $0 \leq t < T \leq T_0$, and
	$k\in[2\,, \sqrt{\Lip(b)}/\Lip(\sigma)^2]$.
	\end{proposition}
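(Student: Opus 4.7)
The plan is to mimic the structure of the proof of Proposition \ref{I:pr:u(x)-u(x')}, but with temporal rather than spatial increments. Using the mild formulation \eqref{I:mild}, for $0\le t<T\le T_0$ and $x\in[0\,,1]$,
\[
   u(T\,,x)-u(t\,,x) = \bigl[(\cG_T u_0)(x) - (\cG_t u_0)(x)\bigr] + \cT_1 + \cT_2,
\]
where
\begin{align*}
   \cT_1 &:= \int_{\cB(T)\setminus\cB(t)} G_{T-s}(x\,,y) b(u(s\,,y))\,\d s\,\d y + \int_{\cB(t)} \bigl[G_{T-s}(x\,,y)-G_{t-s}(x\,,y)\bigr] b(u(s\,,y))\,\d s\,\d y,\\
   \cT_2 &:= \int_{\cB(T)\setminus\cB(t)} G_{T-s}(x\,,y) \sigma(u(s\,,y))\,\xi(\d s\,\d y) + \int_{\cB(t)} \bigl[G_{T-s}(x\,,y)-G_{t-s}(x\,,y)\bigr] \sigma(u(s\,,y))\,\xi(\d s\,\d y).
\end{align*}

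For the initial-condition term, I would use the semigroup identity $\cG_T u_0=\cG_{T-t}\cG_t u_0$ together with the quantitative Feller property (Lemma \ref{lem:Feller}) applied to $f:=\cG_t u_0$, and then the fact that $\cG_t$ maps $\C^\alpha_0$ boundedly into itself (Lemma \ref{lem:Feller:2}), obtaining
\[
   \bigl|(\cG_T u_0)(x) - (\cG_t u_0)(x)\bigr| \apprle \|\cG_t u_0\|_{\C^\alpha_0}\,(T-t)^{\alpha/2} \apprle \|u_0\|_{\C^\alpha_0}\,(T-t)^{\alpha/2}.
\]
Since $\alpha/2\ge\mu$ and $T-t\le T_0$, this term is $\apprle \|u_0\|_{\C^\alpha_0}(T-t)^\mu$, with the implied constant depending on $T_0$.

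For $\cT_1$, I would proceed exactly as in the derivation of \eqref{II:T1}: use \eqref{Lip(f)} to split into a $c(b)$-part and a $\Lip(b)\|u(s\,,y)\|_k$-part, and invoke the standard Green's-function estimates
\[
   \sup_{x\in[0,1]} \int_{\cB(T)\setminus\cB(t)} G_{T-s}(x\,,y)\,\d s\,\d y \le T-t,\qquad
   \sup_{x\in[0,1]} \int_{\cB(t)} \bigl|G_{T-s}(x\,,y)-G_{t-s}(x\,,y)\bigr|\,\d s\,\d y \apprle (T-t)^{1/2},
\]
(the second of which is the temporal analogue of Lemma \ref{lem:A3}, available in the appendix). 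Similarly, for $\cT_2$, the BDG inequality reduces matters to an $L^2$ integral of the kernel, for which I would use
\[
   \sup_{x\in[0,1]} \int_{\cB(T)\setminus\cB(t)} [G_{T-s}(x\,,y)]^2\,\d s\,\d y \apprle (T-t)^{1/2},\qquad
   \sup_{x\in[0,1]} \int_{\cB(t)} \bigl|G_{T-s}(x\,,y)-G_{t-s}(x\,,y)\bigr|^2\,\d s\,\d y \apprle (T-t)^{1/2}.
\]
These bounds are the source of the $1/4$ in $\mu=\min(1/4\,,\alpha/2)$, since they yield $\|\cT_2\|_k\apprle \sqrt{k}(T-t)^{1/4}\bigl[c(\sigma)+\Lip(\sigma)\,\e^{\beta T}\cN_{\beta,k}(u)\bigr]$ after taking square roots and using $\|\sigma(u(s\,,y))\|_k\le c(\sigma)+\Lip(\sigma)\e^{\beta s}\cN_{\beta,k}(u)$.

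To conclude, I would combine the three bounds, choose $\beta:=16c^4\Lip(b)$ as in the proof of Proposition \ref{I:pr:u(x)-u(x')}, and invoke Proposition \ref{I:pr:Lk} in the form \eqref{I:N} to control $\cN_{\beta,k}(u)$ by $\cM_3$. Dividing by $(T-t)^\mu$ and rearranging yields the claimed estimate. The main obstacle is keeping track of which exponent governs which contribution: the drift term naturally provides an $(T-t)^{1/2}$-type decay (after absorbing a logarithmic factor into $T_0$-dependent constants exactly as in \eqref{II:T1}), the noise term is the one that is pinned at $(T-t)^{1/4}$, and the deterministic term contributes $(T-t)^{\alpha/2}$; the minimum of these exponents is precisely $\mu$, which is why the proposition is phrased with that choice.
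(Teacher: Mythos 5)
Your proposal is correct and follows essentially the same route as the paper: the paper splits the increment into the initial-condition piece plus four terms ($\cT_1,\ldots,\cT_4$, two for the drift and two for the noise), whereas you group them pairwise, but the underlying estimates (Lemmas \ref{lem:Feller}, \ref{lem:Feller:2}, \ref{lem:A5}, \ref{lem:A6}, the BDG inequality, and the choice $\beta=16c^4\Lip(b)$ with \eqref{I:N}) are identical. One small slip: in the temporal setting there is no logarithmic factor to absorb — Lemma \ref{lem:A5} gives the $(T-t)^{1/2}$ bound directly — so the parenthetical remark that the log is handled ``exactly as in \eqref{II:T1}'' imports an artifact of the spatial estimate (Lemma \ref{lem:A3}) that simply isn't present here; the final exponent is the same, so this does not affect the argument.
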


One can make a remark, similar to  Remark \ref{rem:u(x)-u(x')},
about the Gaussian nature of the large moments of the temporal increments of
$u$ in the case that $\Lip(b)\gg[\Lip(\sigma)]^4$.

\begin{proof}
	Let $T>t>0$ and $x\in[0\,,1]$
	be fixed; the case $t=0$ is similar but simpler. In a manner similar to \eqref{u-u:G-G:etc}, we have
	\begin{equation}\label{t:u-u:G-G:etc}
		\|u(T,x) - u(t\,,x) \|_k \le \left| (\cG_Tu_0)(x) - (\cG_tu_0)(x)\right|
		+ \|\cT_1\|_k + \|\cT_2\|_k + \|\cT_3\|_k + \|\cT_4\|_k,
	\end{equation}
	where
	\begin{align*}
		\cT_1 &:= \int_{(0,t)\times(0,1)}
			\left[G_{T-s}(x\,,y)  -G_{t-s}(x\,,y)\right] b(u(s\,,y))\,\d s\,\d y,\\
		\cT_2 &:= \int_{(t,T)\times[0,1]}G_{T-s}(x\,,y)  b(u(s\,,y))\,\d s\,\d y,\\
		\cT_3 &:=\int_{(0,t)\times(0,1)}
			\left[G_{T-s}(x\,,y)  -G_{t-s}(x\,,y)\right] \sigma(u(s\,,y))\,\xi(\d s\,\d y),
			\quad\text{and}\\
		\cT_4 &:= \int_{(t,T)\times[0,1]} G_{T-s}(x\,,y)\sigma(u(s\,,y))\,
			\xi(\d s\,\d y).
	\end{align*}
	By Lemma \ref{lem:Feller:2}, $\cG_T u_0 \in \C^\alpha_0$ if
	$u_0 \in \C^\alpha_0$, and $\Vert \cG_T u_0 \Vert_{\C^\alpha_0}
	\apprle \Vert f \Vert_{\C^\alpha_0}$.
	Lemmas \ref{lem:Feller} and \ref{lem:Feller:2} ensure that
	\begin{equation}\begin{split}
		\sup_{x\in[0,1]}
			\left| (\cG_Tu_0)(x) - (\cG_tu_0)(x)\right|
			&\le \sup_{x\in[0,1]} \left| \cG_{T-t}(\cG_t u_0)(x) - (\cG_tu_0)(x)\right| \\
		&\apprle \|\cG_t u_0\|_{\C^\alpha_0}
			\cdot (T-t)^{\alpha/2} \\
		&\apprle \| u_0\|_{\C^\alpha_0} \cdot (T-t)^{\alpha/2},
	\label{II:T0}
	\end{split}\end{equation}
	uniformly for all $u_0\in\C^\alpha_0$ and $0\le t<T$.
	Next, we estimate the $L^k(\Omega)$-norms of $\cT_1,\ldots,\cT_4$, in
	this order.
	
	Lemma \ref{lem:A5} and inequality \eqref{Lip(f)} together imply that
	\begin{align*}
		\|\cT_1\|_k &\le \int_{(0,t)\times(0,1)}|G_{T-s}(x\,,y)-G_{t-s}(x\,,y)|
			\left(c(b)+\Lip(b)\|u(s\,,y)\|_k\right)\d s\,\d y\\
		&\apprle c(b)(T-t)^{1/2} + \Lip(b)\e^{\beta t}\cN_{\beta,k}(u)
			\int_{(0,t)\times(0,1)}|G_{T-s}(x\,,y)-G_{t-s}(x\,,y)|\,\d s\,\d y\\
		&\apprle\left[c(b)+\Lip(b)\e^{\beta t}\cN_{\beta,k}(u)\right]
			\cdot(T-t)^{1/2},
	\end{align*}
	for all $\beta>0$. We select $\beta:=16c^4\Lip(b)$ for the same
	constant $c$ as was used in \eqref{I:N} to deduce from \eqref{I:N} that
	\begin{equation}\label{III:T1}
		\|\cT_1\|_k \le \left[c(b)+ \cM_3\Lip(b)\e^{16c^4\Lip(b)t}\right]\cdot
		(T-t)^{1/2},
	\end{equation}
	uniformly for all $x\in[0\,,1]$, $0\le t<T$, and $k\in[2\,, \sqrt{\Lip(b)}/\Lip(\sigma)^2]$.
	
	Next we bound the size of $\cT_2$. In accord with \eqref{Lip(f)} and \eqref{int:G},
	\begin{align*}
		\|\cT_2\|_k &\le \int_{(t,T)\times[0,1]} G_{T-s}(x\,,y)
			\left(c(b)+\Lip(b)\|u(s\,,y)\|_k\right)\d s\,\d y\\
		&\le \left[c(b) + \e^{\beta T}\Lip(b)\cN_{\beta,k}(u)\right]\cdot(T-t),
	\end{align*}
	for every $\beta>0$. Once again, we choose $\beta:=16c^4\Lip(b)$ in order
	to see that
	\begin{equation}\label{III:T2}
		\|\cT_2\|_k \le\left[c(b) + \cM_3\Lip(b)\e^{16c^4\Lip(b) T}\right]\cdot(T-t),
	\end{equation}
	uniformly for all $x\in[0\,,1]$, $0\le t<T$, and $k\in[2\,, \sqrt{\Lip(b)}/\Lip(\sigma)^2]$.
	
	In order to estimate $\cT_3$, we appeal to \eqref{Lip(f)}, once again,
	together with a suitable formulation of the BDG inequality
	\cite[Proposition 4.4, p.\ 36]{Kh:CBMS}, and deduce that
	\begin{align*}
		\|\cT_3\|_k^2 &\le 4k\int_0^t\d s\int_0^1\d y\
			\left| G_{T-s}(x\,,y) - G_{t-s}(x\,,y) \right|^2 \| \sigma(u(s\,,y))\|_k^2\\
		&\apprle k\int_0^t\d s\int_0^1\d y\
			\left| G_{T-s}(x\,,y) - G_{t-s}(x\,,y) \right|^2
			\left(c(\sigma) + \Lip(\sigma)\|u(s\,,y)\|_k\right)^2\\
		&\apprle kc(\sigma)^2\cdot(T-t)^{1/2} + k[\Lip(\sigma)]^2\cdot
			\int_0^t\d s\int_0^1\d y\
			\left| G_{T-s}(x\,,y) - G_{t-s}(x\,,y) \right|^2 \|u(s\,,y)\|_k^2;
	\end{align*}
	see Lemma \ref{lem:A5} for the last inequality. We use, yet another time,
	the bound
	\[
		\|u(s\,,y)\|_k^2 \le \e^{2\beta t}[\cN_{\beta,k}(u)]^2
	\]
	[valid uniformly for all $0<s<t$, $y\in[0\,,1]$, $k\ge 2$, and $\beta>0$],
	in order to find that
	\begin{equation*}
		\|\cT_3\|_k \apprle k^{1/2}\left[c(\sigma) +
		\Lip(\sigma)\e^{\beta t}\cN_{\beta,k}(u)\right]\cdot(T-t)^{1/4}.
	\end{equation*}
	Set $\beta :=16c^4\Lip(b)$ in order to find, as before, that because of \eqref{I:N},
	\begin{equation}\label{III:T3}
		\|\cT_3\|_k \apprle k^{1/2}\left[c(\sigma) +
		\cM_3\Lip(\sigma)\e^{16c^4\Lip(b) t}\right]\cdot(T-t)^{1/4},
	\end{equation}
	uniformly for all $x\in[0\,,1]$, $0\le t<T$, and $k\in[2\,, \sqrt{\Lip(b)}/\Lip(\sigma)^2]$.
	
	Finally, we estimate $\cT_4$ by similar means: By the BDG inequality,
	\begin{align*}
		\|\cT_4\|_k^2 &\le 4k\int_t^T\d s\int_0^1\d y\
			|G_{T-s}(x\,,y)|^2 \|\sigma(u(s\,,y))\|_k^2\\
		&\apprle k\int_t^T\d s\int_0^1\d y\
			|G_{T-s}(x\,,y)|^2 \left( c(\sigma)+\Lip(\sigma)
			\|u(s\,,y)\|_k\right)^2.
	\end{align*}
	By Lemma \ref{lem:A6},
	\[
		\int_t^T\d s\int_0^1\d y\ |G_{T-s}(x\,,y)|^2\apprle (T-t)^{1/2},
	\]
	uniformly for all $x\in[0\,,1]$ and $0\le t<T$. Therefore,
	\begin{align*}
		\|\cT_4\|_k^2 &\apprle kc(\sigma)^2\cdot(T-t)^{1/2}
			+ k[\Lip(\sigma)]^2\int_t^T\d s\int_0^1\d y\ |G_{T-s}(x\,,y)|^2
			\|u(s\,,y)\|_k^2\\
		&\le kc(\sigma)^2\cdot(T-t)^{1/2}
			+ k[\Lip(\sigma)]^2\e^{2\beta T}[\cN_{\beta,k}(u)]^2\cdot
			\int_t^T\d s\int_0^1\d y\ |G_{T-s}(x\,,y)|^2\\
		&\le k\left[c(\sigma)^2+ [\Lip(\sigma)]^2\e^{2\beta T}[\cN_{\beta,k}(u)]^2
			\right]\cdot(T-t)^{1/2},
	\end{align*}
	uniformly for all $\beta>0$, $k\ge 2$, $0\le t<T$, and $x\in[0\,,1]$. Once again,
	we select $\beta:=16c^4\Lip(b)$ and appeal to \eqref{I:N} in order to see that
	\begin{equation}\label{III:T4}
		\|\cT_4\|_k \apprle k^{1/2}
		\left[c(\sigma)+ \cM_3\Lip(\sigma)\e^{16c^4\Lip(b) T}
		\right]\cdot(T-t)^{1/4},
	\end{equation}
	uniformly for all $x\in[0\,,1]$, $0\le t<T$, and $k\in[2\,, \sqrt{\Lip(b)}/\Lip(\sigma)^2]$.
	Now combine displays
	\eqref{III:T1}--\eqref{III:T4} with
	\eqref{II:T0} and \eqref{t:u-u:G-G:etc} in order to see that
	\[
		\|u(T,x) - u(t\,,x)\|_k \apprle \|u_0\|_{\C^\alpha_0}\cdot
		(T-t)^{\alpha/2} + k^{1/2}\left[\cM_1+ \cM_2
		\cM_3\e^{16c^4\Lip(b)T}\right]\cdot
		(T-t)^{1/4},
	\]
	uniformly for
	 all $k\in[2\,, \sqrt{\Lip(b)}/\Lip(\sigma)^2]$, $0\le t<T$, and $x\in[0\,,1]$.
	This has the desired result;
	we must restrict to $0\le t < T \le T_0$ in order to account for large values of $T-t$.
\end{proof}

Finally, we mention the following variation of Proposition \ref{I:pr:u(T)-u(t)}.
The following includes a bound for
the $k$th moment of temporal increments of the solution to
\eqref{SHE} when $k> \sqrt{\Lip(b)}/\Lip(\sigma)$.

\begin{proposition}\label{I:pr:u(T)-u(t):bis}
	Fix $T_0 >0$. Choose and fix $\alpha\in(0\,,1]$,
	and define $\mu:=\min(\frac14\,,\frac12 \alpha)$.
	If, in addition, $u_0\in\C^\alpha_0$, then
	\[
		\sup_{0 \le t < T \le T_0}\ \sup_{x\in[0,1]}
		\E\left( \left|\frac{u(T\,,x) - u(t\,,x)}{(T-t)^\mu}
		\right|^k\right) <\infty,
	\]
for every $k\ge2$.
\end{proposition}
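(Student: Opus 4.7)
The plan is to mimic the proof of Proposition \ref{I:pr:u(T)-u(t)} verbatim, with the sole modification that wherever Proposition \ref{I:pr:Lk} was invoked (through its reformulation \eqref{I:N}) to control $\cN_{\beta,k}(u)$, we instead invoke Proposition \ref{I:pr:Lk:bis}. This is entirely parallel to the way Proposition \ref{I:pr:u(x)-u(x'):bis} was deduced from Proposition \ref{I:pr:u(x)-u(x')}.

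Concretely, I would first decompose
\[
u(T,x)-u(t,x) = \left[(\cG_Tu_0)(x)-(\cG_tu_0)(x)\right] + \cT_1 + \cT_2 + \cT_3 + \cT_4,
\]
with $\cT_1,\ldots,\cT_4$ defined exactly as in the proof of Proposition \ref{I:pr:u(T)-u(t)}. The estimate \eqref{II:T0} for the deterministic initial-data term, based on Lemmas \ref{lem:Feller} and \ref{lem:Feller:2}, is already of order $(T-t)^{\alpha/2}$ and requires no change. For the $L^k(\Omega)$-norms of the four remaining terms, I would apply \eqref{Lip(f)}, Lemmas \ref{lem:A5} and \ref{lem:A6}, and the BDG inequality in exactly the same way as in the proof of Proposition \ref{I:pr:u(T)-u(t)}. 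This leads, for any $\beta>0$ and $k\ge 2$, to bounds of the form
\[
\|\cT_i\|_k \apprle \left[c_i + C_i\,\e^{\beta T}\cN_{\beta,k}(u)\right]\cdot (T-t)^{\mu_i}\qquad(i=1,\ldots,4),
\]
where $\mu_i\in\{1/4,1/2,1\}$ and $c_i, C_i$ depend only on $c(b),c(\sigma),\Lip(b),\Lip(\sigma)$ and (for $i=3,4$) on $\sqrt{k}$.

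The only change is the choice of $\beta$. For $k\in[2,\sqrt{\Lip(b)}/\Lip(\sigma)^2]$ the conclusion already follows from Proposition \ref{I:pr:u(T)-u(t)}, so I may assume $k>\sqrt{\Lip(b)}/\Lip(\sigma)^2$. For such $k$, I would set
\[
\beta := 16c^4k^2[\Lip(\sigma)]^4,
\]
exactly as in the proof of Proposition \ref{I:pr:u(x)-u(x'):bis}. With this choice, Proposition \ref{I:pr:Lk:bis} shows that $\cN_{\beta,k}(u)$ is finite, being bounded by a constant depending on $k$, $\|u_0\|_{\L^\infty}$, $c(b),c(\sigma),\Lip(b),\Lip(\sigma)$ but \emph{not} on $T\in[0,T_0]$ (through $\e^{\beta T}$, which is absorbed into the constant since $T\le T_0$ is fixed). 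Summing the four bounds and combining with \eqref{II:T0}, one obtains
\[
\|u(T,x)-u(t,x)\|_k \apprle \|u_0\|_{\C^\alpha_0}\,(T-t)^{\alpha/2} + C(k,T_0)\,(T-t)^{1/4},
\]
uniformly for $0\le t<T\le T_0$ and $x\in[0,1]$. Since $\mu = \min(1/4,\alpha/2)$, dividing by $(T-t)^\mu$ and taking $k$-th powers gives the desired conclusion.

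The main obstacle is essentially bookkeeping: one must verify that the replacement of $\beta=16c^4\Lip(b)$ by $\beta=16c^4k^2[\Lip(\sigma)]^4$ still produces a finite bound after multiplication by $\e^{\beta T}$. This is where the restriction $T\le T_0$ is crucial, since the growth rate of the bound in $T$ now depends on $k$; for fixed $T_0$, however, it remains a (large but finite) constant depending only on $k$, $T_0$, and the parameters of $b,\sigma,u_0$, which is exactly what the statement of the proposition requires.
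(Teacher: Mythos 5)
Your proposal is correct and matches the paper's own proof in both structure and substance: mimic the proof of Proposition \ref{I:pr:u(T)-u(t)}, but replace the choice $\beta=16c^4\Lip(b)$ with $\beta=16c^4k^2[\Lip(\sigma)]^4$ in the estimates of $\cT_1,\ldots,\cT_4$ and invoke \eqref{rde3.13} (Proposition \ref{I:pr:Lk:bis}) instead of \eqref{I:N}. Your value of $\beta$ is in fact the consistent one, coinciding with the choice made in the proof of Proposition \ref{I:pr:Lk:bis}; the paper's written proof of Proposition \ref{I:pr:u(T)-u(t):bis} states $\beta=c^4k^2[\Lip(\sigma)]^4/16$, which appears to be a typographical slip, so your version corrects rather than departs from the intended argument.
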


\begin{proof}
	We simply adjust the proof of Proposition \ref{I:pr:u(T)-u(t)}
	by setting $\beta:=c^4 k^2[\Lip(\sigma)]^4/16$---instead
	of $\beta=16c^4\Lip(b)$---in \eqref{III:T1}, \eqref{III:T2},
	and \eqref{III:T3}. Finally, use \eqref{rde3.13}
	instead of \eqref{I:N}.
\end{proof}

We are ready to prove Theorem \ref{th:OR}.

\begin{proof}[Proof of Theorem \ref{th:OR}]
	Propositions \ref{I:pr:u(x)-u(x'):bis} and \ref{I:pr:u(T)-u(t):bis}
	and a standard application of the Kolmogorov continuity theorem
	for random fields \cite[p.\ 31]{Kunita} together imply that $u$ has a modification,
	which we continue to denote by $u$, that is H\"older continuous
	jointly in its two space-time parameters $t$ and $x$.
		
	We note that $u(t\,,0)=u(t\,,1)=0$ for all $t>0$, outside a single null set.
	By the continuity of $t\mapsto u(t)$---which we  justified
	in the previous paragraph---it suffices to prove that
	\begin{equation}\label{goal:0}
		\P\{u(t\,,0)=u(t\,,1)=0\}=1
		\qquad\text{for all $t>0$.}
	\end{equation}
	Since $G_r(0\,,y)=G_r(1\,,y)=0$ for all $r>0$ and $y\in[0\,,1]$,
	$(\cG_tu_0)(0)=(\cG_tu_0)(1)=0$, and \eqref{I:mild} implies \eqref{goal:0}.
	
	By Proposition \ref{I:pr:u(x)-u(x')}, for all $t \ge 0$, the
	function $x \mapsto I(t\,,x)$ belongs to $\cap _{\varepsilon >0}
	\C^{\frac12 - \varepsilon}_0$ and $\cG_t u_0 \in \C^\alpha_0$.
	If $\alpha \in (0,\nicefrac12)$, then
	$\cap _{\varepsilon >0} \C^{\frac12 - \varepsilon}_0 \subset \C^\alpha_0$.
	And whenever $\alpha \ge \nicefrac12$, we
	have $\C^\alpha_0 \subset \cap _{\varepsilon >0} \C^{\frac12 - \varepsilon}_0$. This proves Theorem \ref{th:OR}.
\end{proof}

\subsection{A uniform bound}

The main result of this section is the following maximal inequality.
It contains a
locally-uniform improvement to Proposition \ref{I:pr:Lk}.

\begin{theorem}\label{I:th:Lk}
	Let $u=\{u(t\,,x)\}_{t\ge0,x\in[0,1]}$ denote the continuous modification of $u$,
	and define $\varpi:=\max(12\,, 6/\alpha)$ and fix $T_0 >0$.
	If $u_0\in\C^\alpha_0$ for some $\alpha\in(0\,,1]$ and
	$\sqrt{\Lip(b)}> \varpi\Lip(\sigma)^2$,
	then 
	there exists a finite constant $A$---independent of $\Lip(b)$, $\Lip(\sigma)$---such that for all $T \in [0,T_0]$,
	\[
		\E\left( \sup_{t\in[0,T]}\sup_{x\in[0,1]}\left|
		u(t\,,x)\right|^k\right)
		\le A^k (1\vee T)^{k(1+\frac{\alpha}{2} \wedge \frac{1}{4})}
		\left(\|u_0\|_{\C^\alpha_0}^k + k^{k/2}
		\cM_1^k + k^{k/2} \cM_2^k \cM_3^k \, \e^{Ak\Lip(b)T}\right),
	\]
	uniformly for all $k\in\left(\varpi\,, \sqrt{\Lip(b)}/\Lip(\sigma)^2\right]$.
\end{theorem}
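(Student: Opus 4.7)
The plan is to combine the pointwise moment bound of Proposition~\ref{I:pr:Lk} with the space-time increment moment bounds of Propositions~\ref{I:pr:u(x)-u(x')} and \ref{I:pr:u(T)-u(t)} through a quantitative Kolmogorov continuity theorem---equivalently, the Garsia--Rodemich--Rumsey (GRR) inequality---applied on the parabolic box $[0,T]\times[0,1]$.

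First I would assemble a joint modulus-of-continuity estimate for the continuous modification of $u$. Setting $\mu:=\alpha/2\wedge 1/4$ and using the triangle inequality to route through an intermediate point $(t',x)$, Propositions~\ref{I:pr:u(x)-u(x')} and \ref{I:pr:u(T)-u(t)} together yield
\[
\E\bigl(|u(t,x)-u(t',x')|^k\bigr)\le A^k\,\Lambda(k,T)^k\bigl(|t-t'|^{\mu}+|x-x'|^{2\mu}\bigr)^k,
\]
uniformly for $(t,x),(t',x')\in[0,T]\times[0,1]$ and $k\in[2,\sqrt{\Lip(b)}/\Lip(\sigma)^2]$, where
\[
\Lambda(k,T):=\|u_0\|_{\C^\alpha_0}+k^{1/2}\cM_1+k^{1/2}\cM_2\cM_3\,\e^{A\Lip(b)T}.
\]
Note that $2\mu=\alpha\wedge 1/2$, so the exponents are exactly those delivered by the two propositions.

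Next I would apply GRR with Young function $\Psi(r)=r^k$ and modulus $p(r)=r^{\mu}$ on the product domain regarded as a two-parameter set equipped with the pseudometric $d((t,x),(t',x')):=|t-t'|^{\mu}+|x-x'|^{2\mu}$. The hypothesis $k>\varpi=\max(12,6/\alpha)$ is used precisely to ensure that all entropy integrals underlying GRR converge with room to spare and yield a strictly positive residual H\"older exponent $\eta<\mu$ depending on $k$. The output is a random variable $R$ and an exponent $\eta>0$ such that, almost surely,
\[
|u(t,x)-u(t',x')|\le R\cdot d\bigl((t,x),(t',x')\bigr)^{\eta}\qquad\text{for all $(t,x),(t',x')\in[0,T]\times[0,1]$},
\]
with $\E(R^k)\apprle A^k\Lambda(k,T)^k$.

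Finally, fix any $x_0\in(0,1)$ and split the supremum pathwise as
\[
\sup_{(t,x)\in[0,T]\times[0,1]}|u(t,x)|\le|u(0,x_0)|+\bigl(\operatorname{diam}([0,T]\times[0,1])\bigr)^{\eta}\cdot R.
\]
Taking $k$-th moments and bounding $\E(|u(0,x_0)|^k)=|u_0(x_0)|^k\le\|u_0\|_{\C^\alpha_0}^k$ yields the claimed estimate after raising $\Lambda(k,T)$ to the $k$-th power; the factor $(1\vee T)^{k(1+\mu)}$ accommodates both the diameter raised to $\eta$ and the polynomial-in-$T$ factors generated when the GRR integral is taken over the whole of $[0,T]\times[0,1]$. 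The main obstacle will be the delicate bookkeeping through GRR so that the universal constant $A$ remains independent of $\Lip(b)$ and $\Lip(\sigma)$, and so that all stray $k$-powers coming from the entropy integral collapse cleanly into the $k^{k/2}$ prefactor already present in $\Lambda(k,T)^k$; this is where the margin $k>\varpi$ (rather than merely $k>2/\mu$) is consumed.
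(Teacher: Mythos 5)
Your proposal follows the paper's own route in spirit—assemble the space and time increment moment estimates into a single modulus in the metric $\varrho\bigl((s,y),(t,x)\bigr)=|s-t|^{\mu}+|x-y|^{\eta}$ with $\eta=\alpha\wedge\tfrac12$ and $\mu=\eta/2$, feed them into Garsia's lemma, and pass to the pathwise supremum over the box. But there is a concrete gap in the entropy-integral bookkeeping, and it costs you the advertised range of $k$.

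You pin the Garsia modulus to the metric exponent by choosing $p(r)=r^{\mu}$. Under the pseudometric $\varrho$, Lebesgue volume of a $\varrho$-ball of radius $u$ scales like $u^{1/\mu+1/\eta}=u^{3/\eta}$, so the entropy integral in Garsia's lemma, with $\Psi(x)=|x|^k$, reads
\begin{equation*}
\int_0^{2\delta'}\Psi^{-1}\!\left(\frac{\cC}{\nu(B_\varrho(w,u/4))^2}\right)\d p(u)
\asymp \cC^{1/k}\int_0^{2\delta'} u^{\,\mu-1-6/(\eta k)}\,\d u,
\end{equation*}
which converges precisely when $\mu>6/(\eta k)$, i.e.\ $k>6/(\mu\eta)=12/\eta^2=\varpi/\mu$. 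Since $\mu<1$, this is \emph{strictly larger} than $\varpi$ (e.g.\ $\varpi=12$ but $12/\eta^2=48$ when $\alpha\geq\tfrac12$), so your argument only proves the estimate on the smaller range $k>\varpi/\mu$. Your comment that ``$k>\varpi$ rather than merely $k>2/\mu$'' is what is needed does not match what $p(r)=r^\mu$ actually delivers.

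The paper avoids this by decoupling the Garsia modulus from the metric: it takes $p(r)=r^{\delta}$ with a \emph{free} $\delta\in(\varpi/k,1)$. Then the $\cC$-integral is controlled because $\E\bigl[|u(a)-u(b)|^k/\varrho(a,b)^{k\delta}\bigr]\apprle\Lambda^k\varrho^{k(1-\delta)}$ is integrable over the box as long as $\delta<1$, while the entropy integral $\int u^{\delta-1-6/(\eta k)}\d u$ converges as soon as $\delta>\varpi/k$. The interval $(\varpi/k,1)$ is nonempty exactly when $k>\varpi$, which gives the full stated range. Replacing your $p(r)=r^\mu$ by $p(r)=r^\delta$ with such a $\delta$, and otherwise following your plan (including anchoring at the boundary where $u(t,0)=0$ rather than at an interior $x_0$, though both work), repairs the argument and yields the theorem.
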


The proof of Theorem \ref{I:th:Lk} requires
a quantitative formulation of a celebrated inequality of
Garsia \cite{Garsia} (see also Garsia and Rodemich
\cite{GR}), developed by Dalang et al \cite[Proposition A.1]{DKN}.
First, let us recall
that a function $\Psi:\R\to\R_+$ is a \emph{strong Young
function} if it is even and convex on $\R$, and strictly increasing
on $\R_+$. Its inverse will be denoted by $\Psi^{-1}$.

\begin{lemma}[Garsia's lemma]\label{lem:Garsia}
	Let $(S\,,\varrho)$ be a metric space, $\nu$ a Radon measure
	on $S$, and $\Psi:\R\to\R_+$ a strong Young function that satisfies
	$\Psi(0)=0$ and $\lim_{|z|\to\infty}\Psi(z)=\infty$. Suppose
	$p:[0\,,\infty)\to\R_+$ is a continuous, strictly increasing function that
	satisfies $p(0)=0$, and choose a continuous function $f:S\to\R$.
	Then, for every compact set $K\subset S$ and for all real numbers $\delta>0$,
	\[
		\sup_{\substack{a,b\in K:\\\varrho(a,b)\le\delta}}
		|f(a)-f(b)| \le 10\sup_{w\in K}\int_0^{2\delta}\Psi^{-1}
		\left( \frac{\cC}{\left|\nu\left( B_\varrho(w\,,u/4)\right)
		\right|^2}\right)\d p(u),
	\]
	where $\Psi^{-1}(\infty):=\infty$,
	$B_\varrho(w\,,r):=\{z\in S:\, \varrho(z\,,w)<r\}$ for all
	$w\in S$ and $r>0$, and
	\[
		\cC := \int\nu(\d a)\int\nu(\d b)\
		\Psi\left( \frac{f(a) - f(b)}{p(\varrho(a\,,b))}\right).
	\]
\end{lemma}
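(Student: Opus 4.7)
The plan is to execute the classical Garsia--Rodemich--Rumsey chaining argument, adapted to the metric-measure space $(S,\varrho,\nu)$; the metric-space generalization of the original one-dimensional result of \cite{Garsia,GR} goes through essentially verbatim. One may assume $\mathcal{C}<\infty$, since otherwise the right-hand side is $+\infty$ by the convention $\Psi^{-1}(\infty)=\infty$. The first step is to introduce the pointwise-averaged quantity
\[
    J(z) \;:=\; \int_S \Psi\!\left(\frac{f(z)-f(y)}{p(\varrho(z,y))}\right) d\nu(y), \qquad z\in S,
\]
for which Fubini's theorem yields $\int_S J\,d\nu=\mathcal{C}$.

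Fix $a,b\in K$ with $d:=\varrho(a,b)\le\delta$ and a reference point $w\in K$ chosen adapted to the pair. The heart of the proof is to iteratively build a chain of intermediate points $a=z_0, z_1, z_2,\ldots$ lying in a nested dyadic family of balls $B_n:=B_\varrho(w,r_n)$ with $r_n$ geometrically decreasing (of the form $d/2^n$), together with an analogous chain issuing from $b$, such that $|f(z_n)-f(z_{n+1})|$ is quantitatively controlled. The control comes from a double use of Markov's inequality. First, since $\int J\,d\nu=\mathcal{C}$, the averaged bound $\nu(B_{n+1})^{-1}\int_{B_{n+1}} J\,d\nu\le \mathcal{C}/\nu(B_{n+1})$ shows that the ``bad'' set $\{J>2\mathcal{C}/\nu(B_{n+1})\}\cap B_{n+1}$ has measure less than $\nu(B_{n+1})/2$. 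Second, given $z_n$, applying Markov's inequality to $y\mapsto \Psi((f(z_n)-f(y))/p(\varrho(z_n,y)))$ on $B_{n+1}$ and intersecting with the good set from the first step produces $z_{n+1}\in B_{n+1}$ satisfying both the $J$-bound and $\Psi((f(z_n)-f(z_{n+1}))/p(\varrho(z_n,z_{n+1})))\le 4\mathcal{C}/\nu(B_{n+1})^2$. Applying $\Psi^{-1}$ and using monotonicity of $p$ gives an increment bound of the form $|f(z_n)-f(z_{n+1})|\le p(r_n)\,\Psi^{-1}(4\mathcal{C}/\nu(B_{n+1})^2)$.

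Summing this telescoping chain, together with its analogue built from $b$, and using continuity of $f$ to identify the tail limit inside the shrinking nested balls, yields
\[
    |f(a)-f(b)| \;\le\; C\sum_{n\ge 0}\Psi^{-1}\!\left(\frac{\mathcal{C}}{\nu(B_\varrho(w,r_n))^2}\right)\bigl[p(r_{n-1})-p(r_n)\bigr].
\]
Because $\Psi^{-1}$ is nondecreasing on $\R_+$ and $u\mapsto \nu(B_\varrho(w,u/4))$ is nondecreasing, this dyadic sum is dominated by the Riemann--Stieltjes integral $C\int_0^{2d}\Psi^{-1}(\mathcal{C}/\nu(B_\varrho(w,u/4))^2)\,dp(u)\le C\int_0^{2\delta}(\cdots)\,dp(u)$. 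Taking the supremum over $w\in K$ yields the stated bound, and a careful accounting of the factors of $2$ and $4$ appearing in the dyadic scales and in the two Markov steps is what produces the advertised universal constant $10$.

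The principal obstacle is the joint Markov step, where one must extract $z_{n+1}\in B_{n+1}$ that controls $J(z_{n+1})$ \emph{and} $\Psi((f(z_n)-f(z_{n+1}))/p(\varrho(z_n,z_{n+1})))$ simultaneously; the existence of such $z_{n+1}$ hinges on showing that the union of the two ``bad'' subsets of $B_{n+1}$ has strictly smaller $\nu$-measure than $\nu(B_{n+1})$, which dictates the choice of thresholds and, ultimately, the numerical constant. A secondary issue is to handle gracefully those scales at which $\nu(B_\varrho(w,r_n))=0$: by the convention $\Psi^{-1}(\infty)=\infty$ the integrand on the right-hand side becomes $+\infty$ precisely there, which renders the inequality vacuous and allows the chain construction to terminate at that scale without further argument.
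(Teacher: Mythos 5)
A preliminary remark: the paper does not prove this lemma at all --- it is quoted from Dalang, Khoshnevisan and Nualart \cite[Proposition A.1]{DKN} --- so your attempt has to stand on its own against the classical Garsia--Rodemich argument. Your sketch has the right architecture (the averaged functional $J$, a double Markov selection at each scale, telescoping plus continuity of $f$, then a sum-to-integral comparison), but the two genuinely delicate points of that argument are handled incorrectly as written. First, the start of the chain: you set $z_0=a$ and select $z_1$ by applying Markov's inequality to $y\mapsto\Psi\bigl((f(a)-f(y))/p(\varrho(a,y))\bigr)$ on $B_1$. Markov's inequality needs an a priori bound on $\int_{B_1}\Psi\bigl((f(a)-f(y))/p(\varrho(a,y))\bigr)\nu(\d y)\le J(a)$, and $J(a)$ is controlled by nothing --- only the average $\int J\,\d\nu=\cC$ is known. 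This is exactly why Garsia's proof runs the chain the other way around: the initial point of the chain is an auxiliary point chosen by a Chebyshev/mean-value argument so that $J$ there is at most $\cC/\nu(B_0)$, each increment compares the newly selected point with the previously selected one (whose $J$-value was controlled at the previous stage), and the endpoints $a$ and $b$ are reached only as \emph{limits} of the two chains, using the continuity of $f$. With your orientation the very first link of the chain has no justification.

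Second, the scales. You take $r_n=d/2^n$, dyadic in the metric, and then claim that $\sum_n p(r_n)\,\Psi^{-1}\bigl(4\cC/\nu(B_{n+1})^2\bigr)$ is dominated by $\int_0^{2\delta}\Psi^{-1}\bigl(\cC/\nu(B_\varrho(w,u/4))^2\bigr)\d p(u)$ ``by monotonicity''. Monotonicity only lets you bound the $\Psi^{-1}$ factor from below on each dyadic block of $u$; you still need the $p$-mass of that block, $p(2r_n)-p(r_n)$ (or $p(r_n)-p(r_{n+1})$), to dominate a fixed multiple of $p(r_n)$, and no doubling-type hypothesis on $p$ is available: $p$ is merely continuous and strictly increasing, and for instance $p(u)=1/\log(\e^2/u)$ has $p(2r)-p(r)\ll p(r)$ as $r\downarrow0$, so the asserted domination fails for metric-dyadic radii. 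The classical proof avoids this by choosing the radii dyadically in $p$, namely $p(r_{n+1})=\tfrac12\,p(r_n)$, so that $p(r_n)=2\,[p(r_n)-p(r_{n+1})]$ telescopes exactly into the Stieltjes integral; this choice, together with the resulting bookkeeping, is also what produces the constant $10$. Both defects are repairable, but they are precisely the substance of the lemma's proof, so as written the argument has genuine gaps.
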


\begin{proof}[Proof of Theorem \ref{I:th:Lk}]
	Throughout the proof, set
	$\eta = \alpha\wedge \frac12$ and $\mu:=\frac14\wedge \frac{\alpha}{2} = \frac{\eta}{2}$.
	Let $S$ denote the space-time continuum. That is,
	\[
		S := \R_+\times[0\,,1].
	\]
	We can define a metric $\varrho$ on $S$ as follows:
	\[
		\varrho\left( (s\,,y) ~,\, (t\,,x) \right) := |s-t|^\mu + |x-y|^\eta,
	\]
	for every $s,t\ge0$ and $x,y\in [0\,,1]$.
	Propositions \ref{I:pr:u(x)-u(x')} and \ref{I:pr:u(T)-u(t)} together imply that
	there exists a finite constant $A>0$ such that
	\begin{align}\nonumber
	\E\left( \left| u(s\,,y) - u(t\,,x)\right|^k\right) &\le
	    A^k \left(\|u_0\|_{\C^\alpha_0}^k + k^{k/2} \cM_1^k + k^{k/2} \cM_2^k \cM_3^k \, \e^{Ak\Lip(b)(s \vee t)}\right)\\
		  &\qquad\qquad \times \left[ \varrho\left((s\,,y)~\,,(t\,,x)\right)\right]^k,
			\label{I:inc:mom}
	\end{align}
	uniformly for every real number $k\in[2\,, \sqrt{\Lip(b)}/\Lip(\sigma)^2]$,
	all $x,y\in[0\,,1]$, and all $s,t\in[0,T]$. Choose
	and fix some
	\begin{equation}\label{I:delta}
		\delta\in \left( \frac{\varpi}{k}\,,1\right).
	\end{equation}
	This is possible because we assume $k> \varpi \geq 12$.
	We plan to apply Garsia's lemma (Lemma \ref{lem:Garsia}) with
	$p(x):=x^\delta$, $\Psi(x):=|x|^k$, and $\nu:=$ the standard Lebesgue measure
	on
	\[
		K:=[0\,,T]\times[0\,,1].
	\]
	The quantity $\cC$ of Lemma
	\ref{lem:Garsia} can now be evaluated as
	\[
		\cC = \int_{K\times K}
		\frac{|u(s\,,y)-u(t\,,x)|^k}{\left[
		\varrho\left((s\,,y)~,\,(t\,,x)\right)\right]^{k\delta}}\,\d s\,\d y\,\d t\,d x.
	\]
	We know, thanks to \eqref{I:inc:mom} and since $\delta < 1$, that $\E[\cC]<\infty$, and hence
	$\cC<\infty$ a.s. In fact, we can deduce from \eqref{I:inc:mom} and Lemma \ref{rd:lemkolm} below that
	\begin{equation}\label{I:E(C)}\begin{split}
		\left\{\E[\cC]\right\}^{1/k} & \le
		A \left(\|u_0\|_{\C^\alpha_0} + k^{1/2} \cM_1 + k^{1/2} \cM_2 \cM_3 \, \e^{AT\Lip(b)}\right)\\
		   & \qquad \times \left[\int_{K\times K} [\varrho((s\,,y)~\,,(t\,,x))]^{k(1-\delta)} \d s \d y \d t \d x\right]^{1/k} \\
		   & \apprle A \left(\|u_0\|_{\C^\alpha_0} + k^{1/2} \cM_1 + k^{1/2} \cM_2 \cM_3 \, \e^{AT\Lip(b)}\right) (1\vee T)^{(\eta(1-\delta) + 3/k)/2}
	%
	\end{split}\end{equation}
	uniformly for every $k\in[2\,,\sqrt{\Lip(b)}/\Lip(\sigma)^2]$.
	
	Next we note that, uniformly for all $(r\,,y)\in S$ and $0\le u\le 4$,
	\begin{align*}
		\nu\left( B_\varrho((r\,,y), u/4)\right) &=
			\nu\left\{ (t\,,x):\ |r-t|^\mu + |y-x|^\eta \le \frac{u}{4}\right\}\\
		&\asymp\nu\left\{ (t\,,x):\ |r-t|\apprle u^{1/\mu}\text{ and }
			|y-x|\apprle u^{1/\eta}\right\}\\
		&\asymp u^{(1/\mu)+(1/\eta)} \\
		&= u^{3/\eta}.
	\end{align*}
	In particular it follows that, uniformly for all $(r\,,y)\in S$ and $0\le u\le 4$,
	\[
		\Psi^{-1}\left( \frac{\cC}{\left|\nu\left( B_\varrho
		((r\,,y)\,, u/4)\right) \right|^2}\right) \asymp
		\frac{\cC^{1/k}}{u^{6/(\eta k)}}.
	\]
	As mentioned in the proof of Theorem \ref{th:OR}, a classical form of the Kolmogorov continuity theorem \cite[p.\ 31]{Kunita} and
	\eqref{I:inc:mom} together imply that $(t\,,x)\mapsto u(t\,,x)$
	has a continuous modification, which we again denote $u$. Therefore,
	we can now see from Lemma \ref{lem:Garsia} that
	there exist finite and nonrandom constants $L_1,L_2$ such that
	\begin{align*}
		\left| u(s\,,y) - u(t\,,x) \right|^k &\le L_1^k
			\cC \left[\int_0^{2\varrho[(s,y),(t,x)]}
			\frac{u^{\delta-1}}{u^{6/(\eta k)}}\,\d u\right]^k\\
		&\le L_2^k \cC\left[\varrho\left( (s\,,y)
			~,\, (t\,,x)\right)\right]^{k\delta-6/\eta}
			\qquad\text{a.s.}
	\end{align*}
	(where we have used that $\delta > 6/(k\eta) = \varpi/k$), uniformly for
	all $(s\,,y),(t\,,x)\in K$ and $k\in[\varpi\,, \sqrt{\Lip(b)}/\Lip(\sigma)^2]$ %
	(it might help to notice that
		\[
			L_2 = \frac{L_1\, 2^{\delta - 6/(\eta k)}}{%
			\delta - \frac{6}{k\eta}} \le \frac{L_1 \, 2^\delta}{\delta - \frac{3}{\eta}},
		\]
		since $k\ge 2$).
	In particular, \eqref{I:E(C)} implies that there exists a finite constant $A$ such that
	\begin{align}\nonumber
		&\E\left( \sup_{\substack{(s,y),(t,x)\in K:\\
			(s,y)\neq(t,x)}}
			\frac{| u(s\,,y) - u(t\,,x) |^k}{\left[\varrho\left( (s\,,y)
			~,\, (t\,,x)\right)\right]^{k\delta-6/\eta}} \right) \\
		&\qquad \le A^k \left(\|u_0\|_{\C^\alpha_0}^k + k^{k/2} \cM_1^k + k^{k/2} 				 \cM_2^k \cM_3^k \, \e^{Ak\Lip(b)T}\right)
		   (1\vee T)^{(\eta k(1-\delta) + 3)/2},
	\label{I:oops}
\end{align}
uniformly for all $k\in[\varpi\,, \sqrt{\Lip(b)}/\Lip(\sigma)^2]$. The triangle inequality implies that
\[
   \left\| \sup_{(t,x)\in K} |u(t\,,x)| \right\|_k \leq \left\| \sup_{(t,x)\in K} |u(t\,,x) - u(t\,,0)| \right\|_k + \left\| \sup_{t\in [0,T]} |u(t\,,0)| \right\|_k.
\]
The second term vanishes (see \eqref{goal:0}), and the first term is bounded above by
	\[
		\left\| \sup_{\substack{(t,x)\in K : \\ x \neq 0}} \frac{|u(t\,,x) - u(t\,,0)|}{(\vert x - 0\vert^\eta)^{\delta-6/(k\eta)}} \right\|_k \le
		\left\| \sup_{\substack{(s,y),(t,x)\in K:\\
		(s,y)\neq(t,x)}}
		\frac{| u(s\,,y) - u(t\,,x) |}{\left[\varrho\left( (s\,,y)
		~,\, (t\,,x)\right)\right]^{\delta-6/(k\eta)}} \right\|_k.
	\]
The theorem now follows from \eqref{I:oops} and the fact that $\eta (1-\delta) + 3/k \le \eta + 3/2$.
\end{proof}

\begin{lemma}\label{rd:lemkolm}
	Uniformly for $T>0$,
	\[
		\int_{K\times K} [\varrho((s\,,y)~\,,(t\,,x))]^{k(1-\delta)}
		\,\d s \,\d y \,\d t \,\d x \apprle (1\vee T)^{(k\eta(1-\delta) +3)/2}.
	\]
\end{lemma}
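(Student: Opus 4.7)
My plan is to reduce the claim to two elementary one-dimensional integrals by splitting the quasi-metric across its time and space contributions. Set $p := k(1-\delta) > 0$. The subadditivity inequality $(a+b)^p \le 2^p(a^p+b^p)$, applied with $a = |s-t|^\mu$ and $b = |x-y|^\eta$, yields
\[
\varrho((s,y),(t,x))^{k(1-\delta)} \le 2^p\bigl(|s-t|^{\mu p} + |x-y|^{\eta p}\bigr).
\]
The prefactor $2^p = 2^{k(1-\delta)}$ depends on $k$, but this is harmless: the lemma is invoked in \eqref{I:E(C)} only after extracting the $k$-th root, so such factors just modify the universal constant $A$.

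Next I would integrate each term separately over $K\times K = ([0\,,T]\times[0\,,1])^2$. The time-only integrand factorizes to give $\int_0^T\!\int_0^T |s-t|^{\mu p}\,\d s\,\d t \asymp T^{\mu p + 2}$ by a standard beta-type computation, while the space-only integrand factorizes to give $T^2 \cdot \int_0^1\!\int_0^1 |x-y|^{\eta p}\,\d x\,\d y \asymp T^2$. Adding the two contributions and recalling $\mu = \eta/2$, I conclude
\[
\int_{K\times K} \varrho((s,y),(t,x))^{k(1-\delta)}\,\d s\,\d y\,\d t\,\d x \apprle T^{\mu p + 2} + T^2,
\]
with an implicit constant depending only on $p, \mu, \eta$.

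The main (mild) obstacle is the final comparison with the target bound $(1\vee T)^{\mu p + 3/2} = (1\vee T)^{(k\eta(1-\delta)+3)/2}$, since the naive estimate above carries an extra $T^{1/2}$ factor relative to the target exponent. For $T\le 1$ the inequality is immediate (the left-hand side is at most $2$ and the right-hand side equals $1$). For $1<T\le T_0$—and this is the only regime that matters, since the surrounding section operates under the standing assumption $T\le T_0$—I would write $T^{\mu p + 2} \le T_0^{1/2}\,T^{\mu p + 3/2}$ and, using $\mu p \ge 0$ together with $T\ge 1$, also $T^2 \le T_0^{1/2}\,T^{\mu p + 3/2}$. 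The offending $T^{1/2}$ is thus absorbed at the price of an implicit constant that depends additionally on $T_0$ but not on $T$, which is exactly the uniformity required by the application in Theorem~\ref{I:th:Lk}. Apart from this absorption step, the argument is a direct computation.
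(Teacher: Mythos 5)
Your proof is correct and takes a genuinely different (and more elementary) route than the paper. The paper bounds $(\vert s-t\vert^{\eta/2}+\vert x-y\vert^\eta)^{k(1-\delta)}\apprle(\vert s-t\vert^{1/2}+\vert x-y\vert)^{k\eta(1-\delta)}$ via the inequality $a^\eta+b^\eta\apprle(a+b)^\eta$, then changes variables to $u=s-t$, $v=x-y$, substitutes $u=w^2$, and finishes with polar coordinates in $(w,v)$. You instead split $(a+b)^p\le 2^p(a^p+b^p)$ at the outset, which decouples the time and space integrals and lets you compute each one exactly up to constants. This is a cleaner calculation and it yields the sharp order $T^{\mu p+2}+T^2$ of the left-hand side.

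You have also correctly diagnosed that the lemma, read literally (``uniformly for $T>0$''), overstates the bound: for $T>1$ your exact asymptotic $T^{\mu p+2}$ exceeds the claimed $(1\vee T)^{\mu p+3/2}$ by a factor of $T^{1/2}$. In fact, the paper's own change of variables is the culprit: reducing $\int_{[0,T]^2}f(\vert s-t\vert)\,\d s\,\d t$ to $\int_0^T f(u)\,\d u$ costs a factor of order $T$, which is implicitly absorbed into the unquantified constant $C$ in the paper's displayed bound and then dropped from the conclusion. Your fix---absorbing the stray $T^{1/2}$ into a $T_0$-dependent constant using the standing restriction $T\le T_0$ of Theorem~\ref{I:th:Lk}---is exactly the right repair, and it is consistent with how the lemma is actually applied (in \eqref{I:E(C)}, after a $k$-th root, entering a bound where the constant $A$ is already permitted to depend on $T_0$). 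Your remark that the $k$-dependent prefactor $2^{k(1-\delta)}$ is harmless after taking $k$-th roots is likewise correct, and the paper's own comparison step incurs a $k$-dependent constant for the same reason.
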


\begin{proof}
	The left-hand side is equal to
	\[
		\int_{K\times K} [\vert s - t\vert^{\eta/2} +
		\vert x-y\vert^\eta]^{k(1-\delta)} \,\d s\, \d y\, \d t \,\d x \\
		 \apprle \int_{K\times K} [\vert s - t\vert^{1/2} +
		 \vert x-y\vert]^{k\eta(1-\delta)}\, \d s\, \d y\, \d t\,\d x.
	\]
	Set $s-t = u$, $x,y = v$, and bound this by
	\[
		C \int_0^T \d u \int_0^1 \d v \, [u^{1/2} + v]^{k\eta(1-\delta)}.
	\]
	Let $u= w^2$, $\d u = 2 w \,\d w$, so this is bounded above by
	\[
		C \int_0^{\sqrt{T}} \d w \, w \int_0^1 \d v [w + v] ^{k\eta(1-\delta)}
		\le C \int_0^{\sqrt{T}} \d w  \int_0^1 \d v [w + v] ^{k\eta(1-\delta)+1}.
	\]
	Pass to polar coordinates in the variables $(w\,,v)$ to bound this by
	\[
		 C \int_0^{1 \vee \sqrt{T}} d\rho\, \rho^{k\eta(1-\delta)+2}
		 = \tilde C (1 \vee T)^{(k\eta(1-\delta)+3)/2}.
	\]
	This concludes the proof.
\end{proof}

\section{Proof of Theorem \ref{th:C:alpha}}\label{sec:Pf:Thm:C:alpha}
For all $N\ge1$, let $b_N$ be the following truncation of the
drift function:
\begin{equation}\label{bN}
	b_N(z) := \begin{cases}
		b(z)&\textnormal{if $|z|\le N$},\\
		b(N)&\textnormal{if $z>N$},\\
        b(-N)&\textnormal{if $z<-N$}.
	\end{cases}
\end{equation}
Let $\sigma_N(z)$ denote the corresponding truncation of the diffusion coefficient $\sigma$.

Consider the stochastic PDE
\begin{equation}\label{BNspde}
	\dot{u}_N(t\,,x) = \tfrac12 u_N''(t\,,x)
	+ b_N\left( u_N(t\,,x)\right) + \sigma_N\left(
	u_N(t\,,x)\right) \xi(t\,,x),
\end{equation}
subject to $u_N(0)=u_0$ and Dirichlet boundary conditions. Since $b_N$, $\sigma_N$ are globally Lipschitz,
standard theory \cite[Chapter 3]{Walsh} implies that the solution
$u_N$ exists for all time, has a continuous modification which we again denote by $u_N$, and is unique almost surely.
Consider also the stopping times
\begin{equation*}
	\tau_N := \inf\left\{ t>0:\ \sup_{x\in[0,1]}\left| u_N(t\,,x)\right| >
	N\right\},
\end{equation*}
where $\inf\varnothing:=\infty$.

The local property of the
stochastic integral \cite[Chapter 1]{nualartd} imply that a.s., for $N > \|u_0\|_{\L^\infty}$,
\begin{equation*}
	u_N(t\,,x) = u_{N+1}(t\,,x)\qquad\text{for all $t\in\left[0\,,
	\tau_N\right)$ and $x\in[0\,,1]$}.
\end{equation*}
Since $u_N$ is well defined for all time, and is a continuous function of
$(t\,,x)$, this proves that
$\tau_N \le \tau_{N+1}$ a.s.\ for all $N\ge \|u_0\|_{\L^\infty}$, and therefore
there exists a space-time stochastic process $u$ such that for all $N\ge \|u_0\|_{\L^\infty}$,
$u(t\,,x)=u_N(t\,,x)$ for all $x\in[0\,,1]$ and $t\in[0\,,\tau_N)$.

Consider the stopping time
\begin{equation*}
	\tau_\infty = \lim_{N\uparrow\infty}\tau_N.
\end{equation*}
Our aim is to show that $\tau_\infty = \infty$ a.s.

The continuity of $u_N$ implies that
$\sup_{x\in[0,1]}|u_N(\tau_N\,,x)|=N$ almost surely. Therefore,
the preceding readily implies the following.

\begin{lemma}
	If $u$ blows up at all, then it does so continuously. More precisely,
	\[
		\lim_{t\nearrow\, \tau_\infty}\
		\sup_{x\in[0,1]}|u(t\,,x)|=\infty
		\quad\text{a.s.\ on $\{\tau_\infty<\infty\}$.}
	\]
\end{lemma}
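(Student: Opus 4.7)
The plan is to use the identity $u\equiv u_N$ on $[0\,,\tau_N]$ together with the stated continuity fact $\sup_x|u_N(\tau_N,x)|=N$ to pin down an explicit subsequence at which the sup-norm of $u$ grows to infinity, and then to argue that this controls the full limit behavior as $t\nearrow\tau_\infty$.

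First I would verify that $\tau_N<\tau_{N+1}$ strictly for every $N\geq\|u_0\|_{\L^\infty}$ on $\{\tau_\infty<\infty\}$. If $\tau_N=\tau_{N+1}$ for some $N$, then strong uniqueness for the Lipschitz-truncated equation, applied up to the stopping time $\tau_N\wedge\tau_{N+1}=\tau_N$ (where $b_N,\sigma_N$ and $b_{N+1},\sigma_{N+1}$ coincide on the relevant range), gives $u_N\equiv u_{N+1}$ on $[0\,,\tau_N)$ and, by continuity of both, on $[0\,,\tau_N]$. This would force $\sup_x|u_{N+1}(\tau_{N+1},x)|=\sup_x|u_N(\tau_N,x)|=N$, contradicting the definition of $\tau_{N+1}$, which requires this quantity to equal $N+1$. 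Hence $\tau_N<\tau_{N+1}$ strictly, and in particular $\tau_N<\tau_\infty$ for every $N$.

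Second, since $u\equiv u_{N+1}$ on $[0\,,\tau_{N+1})\supset[0\,,\tau_N]$ and $u_{N+1}\equiv u_N$ on $[0\,,\tau_N]$, we obtain $u(\tau_N,\cdot)=u_N(\tau_N,\cdot)$ for every $N$. Combined with the continuity fact $\sup_x|u_N(\tau_N,x)|=N$ and with $\tau_N\nearrow\tau_\infty$, this yields
\[
    \sup_x|u(\tau_N,x)|=N\longrightarrow\infty\qquad\text{as }\tau_N\nearrow\tau_\infty,
\]
on $\{\tau_\infty<\infty\}$, which is the essential content of the ``ready implication'' pointed to by the author: the sup-norm of $u$ attains arbitrarily large values along the explicit subsequence $\{\tau_N\}$ of times that accumulate at $\tau_\infty$.

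The only delicate point---promoting this subsequential conclusion to a genuine one-sided limit---I would handle by contradiction, using the standard local existence estimate for the Lipschitz-truncated equation. If $\sup_x|u(t_k,x)|\leq M$ along some sequence $t_k\nearrow\tau_\infty$, then a Picard-iteration argument applied to the $(b_{M+1},\sigma_{M+1})$-equation, started from the bounded datum $u(t_k,\cdot)$, furnishes a deterministic time $\delta(M)>0$ on which the extended solution stays bounded by $M+1$ and hence (by uniqueness) coincides with $u=u_{M+1}$ on $[t_k,t_k+\delta(M))$; taking $t_k$ close enough to $\tau_\infty$ forces $[t_k,t_k+\delta(M))$ to contain $\tau_{M+1}$, contradicting $\sup_x|u_{M+1}(\tau_{M+1},x)|=M+1$ together with the strict ordering $\tau_{M+1}<\tau_\infty$ established in the first step. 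The main obstacle is precisely this quantitative lower bound on the local existence time, though for globally Lipschitz coefficients it is a standard byproduct of the very Picard scheme that constructs $u_{M+1}$.
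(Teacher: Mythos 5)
Your first two steps correctly recover the observation the paper makes: each $u_N$ is continuous, hence $\sup_x|u_N(\tau_N,x)|=N$, and since $u=u_{N+1}$ on $[0,\tau_{N+1})\supset[0,\tau_N]$ (once one notes $\tau_N<\tau_{N+1}$ strictly) we get $\sup_x|u(\tau_N,x)|=N\to\infty$ along $\tau_N\nearrow\tau_\infty$ on $\{\tau_\infty<\infty\}$. This is exactly the ``readily implied'' subsequential blowup that the paper points to and that underlies the lemma; the paper does not spell out anything beyond this.

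Step 3, which tries to promote the subsequential divergence to the full limit, has a genuine gap. You invoke a deterministic local confinement time $\delta(M)>0$ for the Lipschitz $(b_{M+1},\sigma_{M+1})$-equation, claiming that a solution started from $\L^\infty$-data of norm at most $M$ stays below $M+1$ in sup-norm on a time interval of length $\delta(M)$. No such deterministic $\delta(M)$ exists when $\sigma\not\equiv 0$: for any $\delta>0$ and any threshold $K$, the stochastic convolution driven by space-time white noise exceeds $K$ in sup-norm within time $\delta$ with positive probability, because white noise is unbounded. Picard iteration for the globally Lipschitz equation gives global existence and moment estimates, not a deterministic confinement time; this is precisely why, when a bound of this type is needed in Step 1 of Section \ref{sec:Pf:Thm:C:alpha}, the paper proceeds probabilistically via the maximal moment inequality of Theorem \ref{I:th:Lk} and Chebyshev, rather than via a deterministic bound. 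There is also a smaller logical slip at the end of your Step 3: $\tau_{M+1}<\tau_\infty$ is a fixed time, so for $k$ large one has $\tau_{M+1}<t_k$, and the interval $[t_k,t_k+\delta(M))$ does not contain $\tau_{M+1}$; the contradiction you describe does not materialize as written. The contradiction one would want involves instead the first passage of $u$ to level $M+1$ after time $t_k$, together with the separate (and easy) observation that $u$ is bounded on the compact set $[0,t_k]\times[0,1]$, so that boundedness on $[t_k,t_k+\delta(M))\ni\tau_\infty$ would force $\tau_K>\tau_\infty$ for some finite $K$.
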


Now we prove Theorem \ref{th:C:alpha}.

\begin{proof}[Proof of Theorem \ref{th:C:alpha}]
We begin with the proof of the global existence. This is divided into three steps.
	
	\emph{Step 1.} In the first two steps, we replace $b$ in \eqref{SHE} with a function $\tilde b$ that
	has the following special form: There exist two constants
	$\vartheta_1,\vartheta_2\in\R$ such that $\vartheta_2\neq0$ and
	\begin{equation}\label{b}
		\tilde b(z) = \vartheta_1 +\vartheta_2
		|z|\log_+|z|\qquad\text{for all $z\in\R$},
	\end{equation}
	where we recall $\log_+(a):=\log(a\vee\e)$ for all $a\ge 0$. We may assume,
	without of generality, that
	\[
		\vartheta_2>0.
	\]
	Indeed, the case where $\vartheta_2<0$ is handled by making small adjustments to
	the ensuing argument.

	Define
	\[
		\tilde b_{N}(z) := \vartheta_1 + \vartheta_2 (|z|\wedge N)\log_+\left(|z|\wedge N\right),
	\]
	for all $N\ge3$. 
Then $|\tilde b_{N}(z)| \leq \vartheta_1 + \vartheta_2 |z| (\log N)$ so we can take
\begin{equation}\label{Lip(B_N)}
		\Lip(\tilde b_{N})=\vartheta_2 \log N .
\end{equation}
	In particular, for every fixed integer $N\ge 3$,
	the following stochastic PDE is well posed for all time:
	\[
		\dot{U}_{N}(t\,,x) = \tfrac12 U''_{N}(t\,,x) +
		\tilde b_{N}\left( U_{N}(t\,,x)\right) +
		\sigma_N\left( U_{N}(t\,,x)\right) \xi(t\,,x),
	\]
	valid for all $t>0$ and $x\in[0\,,1]$, subject to $U_{N}(0)\equiv u_0$ and the homogeneous Dirichlet boundary conditions.

	   We assume that $u_0 \in \C^\alpha_0$, where $\alpha \in (0,1]$. Define
	\[
		\tau^{(1)}_{N}
		:= \inf\left\{ t>0:\ \sup_{x\in[0,1]}|U_{N}(t\,,x)| > N\right\},
	\]
	where $\inf\varnothing:=\infty$.
	As a central part of this proof, we plan to  prove that
	\begin{equation}\label{goal:T}
		\tau^{(1)}_{\infty}
		:=\lim_{N\nearrow\infty}\tau^{(1)}_{N}=\infty
		\qquad\text{a.s.}
	\end{equation}
	
	In order to justify this assertion, note that since $\vert \sigma(z) \vert = o(\vert z \vert (\log \vert z \vert)^{1/4})$ by \eqref{rd1.2a}, we can choose $\Lip(\sigma_N) = o((\log N)^{1/4})$. Using \eqref{Lip(B_N)}, we see that
	\[
		\sqrt{\Lip(\tilde b_N)}/\Lip(\sigma_N)^2 \to \infty \qquad \mbox{as } N \to \infty,
	\]
	so the inequality
	\begin{equation}\label{NN}
		\sqrt{\Lip(\tilde b_N)} \ge \varpi\Lip(\sigma_N)^2
	\end{equation}
	holds for $N$ large enough, where $\varpi:=\max(12\,, 6/\alpha)$.
	For such $N$, take $k$ slightly
	larger than $\varpi$.
	We appeal to the Chebyshev inequality to see that
	for every $\varepsilon\in (0,1)$ and large $N$,
	\[
		\P\left\{ \tau^{(1)}_{N} < \varepsilon\right\} =
		\P\left\{ \sup_{t\in[0,\varepsilon]}\sup_{x\in[0,1]}
		|U_{N}(t\,,x)| >N\right\}
		\le N^{-k}\E\left( \sup_{t\in[0,\varepsilon]}\sup_{x\in[0,1]}
		|U_{N}(t\,,x)|^k\right).
	\]
	Next, we may apply \eqref{Lip(B_N)} and
	Theorem \ref{I:th:Lk} (recalling the formulas for $\cM_1$, $\cM_2$ and $\cM_3$ in Proposition \ref{I:pr:u(x)-u(x')}), in order to see that there exist constants $A$ and $B$ (that do not depend on $N$) such that
\begin{align*}
		\E\left( \sup_{t\in[0,\varepsilon]}\sup_{x\in[0,1]}
		|U_{N}(t\,,x)|^k \right) &\le A^k \Vert u_0 \Vert_{\C^\alpha_0}^k (B+\log N)^k\e^{A k\vartheta_2\varepsilon \log N}\\
		 &= A^k \Vert u_0 \Vert_{\C^\alpha_0}^k (B+\log N)^k N^{A k\vartheta_2\varepsilon}.
\end{align*}
In other words, we now have
	\begin{equation}\label{PTN}
		\P \{ \tau^{(1)}_{N} < \varepsilon \}
		\le A^k \Vert u_0 \Vert_{\C^\alpha_0}^k (B+\log N)^k N^{k(A \vartheta_2\varepsilon -1)},
	\end{equation}
uniformly for all integers $N$ that satisfy \eqref{NN} and $\varepsilon \in (0,1)$.
Provided that  $\varepsilon <A^{-1} \vartheta_2^{-1}$, the right-hand side converges to $0$ as $N \to \infty$, so
\eqref{PTN} implies that $\tau_{\infty}^{(1)} \ge \varepsilon$ with probability one.
	
	This in turn proves that
	\begin{equation}\label{1:T}
		\tau^{(1)}_{\infty} > \tau_0 := \frac12 \, \min(A^{-1} \vartheta_2^{-1},1)
		\qquad\text{a.s.}
	\end{equation}
	
	\emph{Step 2.} The main goal of this step is to establish the conclusions of Theorem \ref{th:C:alpha} in the special case where $b$ in \eqref{SHE} is replaced by $\tilde b$ from \eqref{b}, and to establish \eqref{goal:T}.
	
	Define
	\[
		W_t(\phi) := \int_{(0,t)\times(0,1)}\phi(x)\,\xi(\d s\,\d x)
		\qquad\text{for all $t>0$ and $\phi\in\L^2$},
	\]
	with $W_0(\phi):=0$. Then $W(\phi)$ is a Brownian motion
	for every $\phi\in\L^2$.
	Let $\F^0_t$ denote the $\sigma$-algebra generated by
	all random variables of the form $W_s(\phi)$, as $s$ ranges in
	$[0\,,t]$ and $\phi$ in $\L^2$. Let $\F_t$ denote the augmented,
	right-continuous extension of $\F^0_t$ to see that $\F:=\{\F_t\}_{t\ge0}$
	is a complete, right-continuous filtration in the sense of general theory
	of processes \cite{Sharpe}. A standard argument (see, for example Nualart and Pardoux
	\cite{NualartPardoux}) shows that
	every process $U_{N}:=\{U_{N}(t)\}_{t\ge0}$
	is a strong Markov process with
	respect to $\F$.
	
	Recall the nonrandom time $\tau_0\in(0\,,\infty)$ from
	\eqref{1:T}, and
	define $\tau^{(2)}_{\infty} := \lim_{N\nearrow\infty}\tau^{(2)}_{N}$,
	where
	\[
		\tau^{(2)}_{N} := \inf\left\{ t > \tau_0:\
		\sup_{x\in[0,1]}|U_{N}(t\,,x)|>N\right\},
	\]
	where $\inf\varnothing:=\infty$.
	According to Theorem  \ref{th:OR}, $U_N(\tau_0)$
	is almost surely an element of $\C^\alpha_0$ for some $\alpha\in(0\,,1]$.
	Therefore, we can condition on $\F_{\tau_0}$ and appeal to the
	asserted Markov property of $U_N$ in order to see that
	$\tau^{(2)}_{\infty}> 2\tau_0$ a.s. Now we proceed by induction
	in order to see that
	\begin{equation}\label{Tell}
		\tau^{(m)}_{\infty} > m\tau_0
		\qquad\text{a.s.\ for all integers $m\ge 1$,}
	\end{equation}
	where $\tau^{(m)}_{\infty}:=
	\lim_{N\nearrow\infty}\tau^{(m)}_{N}$, for
	\[
		\tau^{(m)}_{N} := \inf\left\{ t>(m-1)\tau_0:\
		\sup_{x\in[0,1]}|U_{N}(t\,,x)|>N\right\},
	\]
	with $\inf\varnothing:=\infty$. The preceding discussion reveals that
	$\tau^{(1)}_{\infty}\geq \tau^{(m)}_{\infty}
	> m\tau_0$ a.s.\ for all $m\ge1$.
	Therefore, it follows from \eqref{Tell} that $\tau^{(1)}_{\infty}=\infty$ a.s.
	This completes the proof of \eqref{goal:T}.
	
We now define
	\begin{equation*}
	  	U(t\,,x) = U_{N} (t\,,x), \qquad \mbox{for } t \in
	  	[0\,,\tau^{(1)}_N]\mbox{ and } x \in [0\,,1].
	\end{equation*}
	This defines $U(t\,,x)$ for $t \in \R_+$ and $x \in [0\,,1]$
	in a coherent way since, for each integer $N$, $U_N$ satisfies
	\begin{align}\nonumber
		U_N(t\,,x) &= (\cG_tu_0)(x) + \int_{(0,t)\times (0,1)} G_{t-s}(x\,,y)
			\tilde b_N(U_N(s\,,y))\, \d s\, \d y \\
		  & \hskip1.5in + \int_{(0,t)\times (0,1)} G_{t-s}(x\,,y)
		  	\sigma_N(U_N(s\,,y))\, \xi (\d s \,\d y).
	\label{rd4.12}
	\end{align}
	In particular, since for $|z|\leq N$, $\tilde b_N(z)=\tilde b_{N+1}(z)$ and $\sigma_N(z)=\sigma_{N+1}(z)$, as in the proof of Theorem 5.3 in \cite{cerrai-1}, we have
   $U_{N}(t\,,x) = U_{N+1}(t\,,x)$ for $t \leq \tau^{(1)}_N$.  Therefore, on $\{t \leq \tau^{(1)}_N \}$,
	\[
	   \tilde b_{N}(U_{N}(s\,,y)) = \tilde b(U(s\,,y)),
	   \qquad\mbox{and}\qquad \sigma_{N}(U_{N}(t\,,x)) = \sigma(U(t\,,x)),
	\]
	so on $\{t \le \tau^{(1)}_N\}$, \eqref{rd4.12} becomes
	\begin{align}\nonumber
		U(t\,,x) &= (\cG_tu_0)(x) + \int_{(0,t)\times (0,1)} G_{t-s}(x\,,y)
			\tilde b(U(s\,,y))\, \d s \,\d y \\
		& \hskip1.5in + \int_{(0,t)\times (0,1)}
			G_{t-s}(x\,,y)  \sigma(U(s\,,y))\, \xi (\d s\, \d y).
			\label{rd4.12a}
	\end{align}
	Since $N$ is arbitrary, this equation is satisfied for all $t \in \R_+$.
This establishes the conclusions of Theorem \ref{th:C:alpha} in the special case where $b$ in \eqref{SHE} is replaced by $\tilde b$ from \eqref{b}. We note that the stochastic integral in \eqref{rd4.12a} is a ``localized Walsh integal'' in the sense of Remark \ref{rd_rem1.8}.\\
	
\emph{Step 3.}
		Now we prove the theorem in the general case
		where $b$ is an arbitrary locally-Lipschitz function that satisfies
		the growth condition $|b(z)|=O(|z|\log|z|)$ as $|z|\to\infty$.
		
		We can find $\vartheta_1\in\R$ and $\vartheta_2>0$ such that
		\[
			b_-(z)\le b(z) \le b_+(z),\qquad\text{for all $z\in\R$},
		\]
		where
		\[
			b_{\pm}(z) := \vartheta_1 \pm \vartheta_2|z|\log_+|x|,
			\qquad\text{for all $z\in\R$}.
		\]
		
		Using Step 2, let $U_\pm(t\,,x)$ denote the solution to \eqref{SHE}, where
			$b$ is replaced by $b_{\pm}$. By analogy with \eqref{bN}, 
			let $b_{N,-}$ and $b_{N,+}$ be the truncations of $b_-$ and $b_+$, respectively. Then
	\[
	   b_{N,-}(z) \le b_N(z) \le b_{N,+}(z).
	\]
	Let $u_N$ be the solution to \eqref{BNspde}, $U_{N,-}$ (resp.~$U_{N,+}$) 
	be the solution to \eqref{BNspde} with $b_N$ replaced by $b_{N,-}$ (resp.~$b_{N,+}$). 
	According to the comparison theorem of \cite[Theorem 2.1]{DMP}, 
	for all $(t,x) \in \R_+\times [0,1]$,
	\begin{equation}\label{rd4.13}
		U_{N,-} (t\,,x) \le u_N(t\,,x) \le U_{N,+}(t\,,x).
	\end{equation}
	We have shown in Step 2 that
	\begin{equation}\label{rd4.14}
		\sup_{t\in [0,T]} \sup_{x \in [0,1]} \vert U_\pm (t,x)\vert < \infty,
		\qquad\text{for all $T>0$}.
	\end{equation}
	For any given $(t\,,x)$, for $N$ sufficiently large, $U_\pm(t,x) = U_{N,\pm}(t,x)$, therefore \eqref{rd4.13} implies that
	\begin{equation}\label{rd4.15}
		U_-(t\,,x) \le u_N(t\,,x) \le U_+(t\,,x).
	\end{equation}
	Recall that
	\[
		\tau_N = \inf\{t >0: \sup_{x \in [0,1]} \vert u_N(t,x)\vert > N \}.
	\]
	Then \eqref{rd4.14} and \eqref{rd4.15} imply that $\lim_{N \to \infty} \tau_N = \infty$ a.s.,  and we can define
	\begin{equation}\label{rd4.11}
		u(t\,,x) = u_N(t\,,x), \qquad \mbox{for } t \in [0\,,\tau_N]
		\mbox{ and } x \in [0\,,1].
	\end{equation}
	As above, this definition is coherent. By \eqref{rd4.15},
	\[
	   U_-(t,x) \leq u(t,x) \leq U_+(t,x), \qquad \mbox{for all } t \in \R_+ \mbox{ and } x \in [0,1].
	\]
	Since
	\begin{align*}
		u_N(t\,,x) &= (\cG_tu_0)(x) + \int_{(0,t)\times (0,1)}
			G_{t-s}(x\,,y)   b_N(u_N(s\,,y))\, \d s \d y \\
		& \hskip1.5in + \int_{(0,t)\times (0,1)} G_{t-s}(x\,,y)
			\sigma_N(u_N(s\,,y))\, \xi (\d s\, \d y),
	\end{align*}
	and on $\{\tau_N > t \}$, $b_N(u_N(s\,,y)) = b(u(s\,,y))$
	and $\sigma_N(u_N(s,y)) = \sigma(u(s,y))$, the local property
	of the stochastic integral \cite[Chapter 1]{nualartd} implies that on $\{\tau_N > t \}$,
	\begin{align*}
		u(t\,,x) &= (\cG_tu_0)(x) + \int_{(0,t)\times (0,1)} G_{t-s}(x\,,y)
			b(u(s\,,y))\, \d s \,\d y \\
		& \hskip1.5in + \int_{(0,t)\times (0,1)} G_{t-s}(x\,,y)
			\sigma(u(s\,,y))\, \xi (\d s\, \d y).
	\end{align*}
	Since $\P(\cup_{N \in \IN} \{\tau_N > t \}) = 1$, we see that this
	equality is satisfied a.s., and therefore $u$ is a random field
	solution of \eqref{SHE}. By \eqref{rd4.14} and \eqref{rd4.15},
	\[
		\sup_{t\in [0,T]} \sup_{x \in [0,1]} \vert u(t\,,x)\vert < \infty
		\qquad\text{for all $T>0$}.
	\]
This establishes the existence statement in Theorem \ref{th:C:alpha} as well as \eqref{rd0.1}. The solution $U$ is continuous by \eqref{rd4.11}, since each $U_N$ is continuous.

   Finally, we establish uniqueness of the solution to \eqref{SHE}. Let $v=\{v(t,x)\}$ be a solution of \eqref{SHE} (with initial condition $u_0$) in $C(\R_+ \times [0,1])$. We will show that $v=u$, where $u$ was constructed in Step 3 above. Define
\[
   \tau_N^{(1)}(v) = \inf\{ t\geq 0: \sup_{x\in [0,1]} \vert v(t,x)\vert >N\}.
\]
By sample path continuity of $v$, $\tau_N^{(1)}(v) > 0$ a.s., and $\lim_{N \to +\infty} \tau_N^{(1)}(v) = +\infty$ a.s. On $[0, \tau_N^{(1)}(v)]$, $v$ solves \eqref{BNspde}. Since $b_N$ and $\sigma_N$ are globally Lipschitz, the standard uniqueness statement implies that for all $t \geq 0$,
\[
   v(t \wedge \tau_N^{(1)}(v) \wedge \tau_N^{(1)}(u)) = u(t \wedge \tau_N^{(1)}(v) \wedge \tau_N^{(1)}(u)).
\] 
We let $N \to +\infty$. By \eqref{goal:T}, $\tau_N^{(1)}(u) \to + \infty$ a.s., so we deduce that $v(t) = u(t)$, for all $t \geq 0$. This completes the proof of Theorem \ref{th:C:alpha}.
\end{proof}

\appendix
\section{On the Green's function}
Let us solve $\dot{u} = \frac12 u''$ in $[0\,,1]$
subject to the initial condition $u_0:=\delta_y$ and
boundary conditions $u_t(0)=u_t(1)=0$ for all $t>0$. This endeavor yields
the formula \eqref{G} for the fundamental solution, which we
denote by $G_t(x\,,y)$.
In accord with the maximum principle, $G_t(x\,,y)\ge 0$ for all $t>0$
and $x,y\in[0\,,1]$.

Our next results are definitely well known, as well as simple. But
we include them since we will need to know about the parameter
dependencies.

\begin{lemma}\label{lem:A1}
	Uniformly for all $w\ge v>0$,
	\begin{equation*}
		\sum_{n=1}^\infty \left(w + vn^2\right)^{-1}
		\apprle \frac1{\sqrt{vw}}.
	\end{equation*}
\end{lemma}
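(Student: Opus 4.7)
The plan is to prove this via the standard integral-comparison technique, which handles the uniformity in $(v\,,w)$ essentially for free.

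First, I would observe that for fixed $v,w>0$, the function $f(x):=(w+vx^2)^{-1}$ is positive and strictly decreasing on $[0\,,\infty)$. Consequently, for every integer $n\geq 1$,
\[
   f(n)\;\leq\;\int_{n-1}^{n} f(x)\,\d x,
\]
and summing over $n\geq 1$ yields the standard comparison
\[
   \sum_{n=1}^\infty \frac{1}{w+vn^2}\;\leq\;\int_0^\infty\frac{\d x}{w+vx^2}.
\]

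Next I would evaluate the right-hand integral explicitly by the substitution $x=\sqrt{w/v}\,t$, under which $\d x=\sqrt{w/v}\,\d t$ and $w+vx^2=w(1+t^2)$. This gives
\[
   \int_0^\infty \frac{\d x}{w+vx^2}
   \;=\;\frac{1}{\sqrt{vw}}\int_0^\infty\frac{\d t}{1+t^2}
   \;=\;\frac{\pi}{2\sqrt{vw}},
\]
which is the desired bound with implied constant $\pi/2$. Note that the bound actually holds for all $v,w>0$, so the hypothesis $w\geq v$ is not even required at this stage; the authors presumably restrict to $w\geq v$ only because that is the regime in which the lemma will be applied later.

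The main obstacle would be essentially nonexistent: the only care needed is to ensure that the integral comparison is valid (which follows from monotonicity on all of $[0\,,\infty)$, so the lower endpoint $0$ is fine) and to track that the resulting constant $\pi/2$ is indeed independent of $(v\,,w)$, which it manifestly is.
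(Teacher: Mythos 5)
Your proof is correct. You prove the claim via a single integral comparison: since $f(x) = (w+vx^2)^{-1}$ is decreasing on $[0,\infty)$, the full sum is dominated by $\int_0^\infty f$, which the substitution $x=\sqrt{w/v}\,t$ evaluates exactly to $\frac{\pi}{2\sqrt{vw}}$. The paper instead splits the sum at the crossover scale $\lfloor\sqrt{w/v}\rfloor$: on the low range it bounds each term by $w^{-1}$, and on the tail it bounds each term by $(vn^2)^{-1}$ and compares the resulting $\sum n^{-2}$ with a tail integral. Both are short and correct, but your route is tighter and more uniform: it avoids the case split, yields an explicit constant $\pi/2$ rather than an implicit one, and, as you rightly note, makes no use of the hypothesis $w\ge v$ at all (that restriction is inherited from the application in Lemma A.2, not needed for this estimate). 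The paper's split has the modest pedagogical advantage of making visible where the scale $\sqrt{w/v}$ comes from, but mathematically your version dominates.
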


\begin{proof}
	We can bound the preceding sum from above by $S_1+S_2$, where
	\begin{equation*}\begin{split}
		S_1 &:= \sum_{n=1}^{\lfloor \sqrt{w/v}\rfloor}
			w^{-1}\le (vw)^{-1/2},\textnormal{ and}\\
		S_2 &:= v^{-1}\sum_{n=1+\lfloor \sqrt{w/v}\rfloor}^\infty n^{-2} \le
			v^{-1}\int_{\sqrt{w/v}}^\infty z^{-2}\,\d z\le
			(vw)^{-1/2}.
	\end{split}\end{equation*}
	The lemma follows from these inequalities.
\end{proof}

\begin{lemma}\label{lem:A2}
	Uniformly for every $\beta>0$ and $\theta\in\{1\,,2\}$,
	\begin{equation*}
		\int_0^\infty \d t\int_0^1\d y\ \e^{-\beta t}|G_t(x\,,y)|^\theta \apprle
		\beta^{-1/\theta}.
	\end{equation*}
\end{lemma}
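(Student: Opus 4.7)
\textbf{Proof plan for Lemma \ref{lem:A2}.}

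The plan is to handle the two cases $\theta=1$ and $\theta=2$ separately, exploiting in each case a different structural feature of the Dirichlet heat kernel.

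For $\theta=1$, I would use the probabilistic interpretation of $G_t(x,\cdot)$ as the sub-probability density of Brownian motion killed at $\{0,1\}$, which was already recorded just after \eqref{boundG}. This immediately gives $\int_0^1 G_t(x,y)\,\d y\le 1$ uniformly in $t>0$ and $x\in[0,1]$, so Fubini yields
\[
\int_0^\infty\d t\int_0^1\d y\ \e^{-\beta t}G_t(x,y)\le \int_0^\infty \e^{-\beta t}\,\d t=\beta^{-1},
\]
which is the desired bound with implied constant $1$.

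For $\theta=2$, the natural move is to exploit the eigenfunction expansion \eqref{G}. Since $\{\sqrt 2\sin(n\pi\cdot)\}_{n\ge1}$ is an orthonormal basis of $\L^2[0,1]$, Parseval's identity applied to $y\mapsto G_t(x,y)$ gives
\[
\int_0^1 |G_t(x,y)|^2\,\d y=2\sum_{n=1}^\infty \sin^2(n\pi x)\,\e^{-n^2\pi^2 t}.
\]
Multiplying by $\e^{-\beta t}$ and integrating in $t$ by Fubini (all terms are nonnegative), I obtain
\[
\int_0^\infty\d t\int_0^1\d y\ \e^{-\beta t}|G_t(x,y)|^2
=2\sum_{n=1}^\infty\frac{\sin^2(n\pi x)}{\beta+n^2\pi^2}
\le 2\sum_{n=1}^\infty\frac{1}{\beta+\pi^2 n^2}.
\]
The remaining step is to bound the sum on the right by a constant multiple of $\beta^{-1/2}$ uniformly in $\beta>0$.

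The only mild obstacle is that Lemma \ref{lem:A1} is stated under the hypothesis $w\ge v$, so it applies directly to $\sum_n(\beta+\pi^2n^2)^{-1}$ only when $\beta\ge\pi^2$, giving $\apprle(\pi^2\beta)^{-1/2}\apprle\beta^{-1/2}$. For $0<\beta<\pi^2$, I would simply estimate $\sum_n(\beta+\pi^2n^2)^{-1}\le\pi^{-2}\sum_n n^{-2}=1/6$, which is $\le(6\pi)^{-1}\beta^{-1/2}\pi\cdot\pi^{-1}\apprle\beta^{-1/2}$ since $\beta^{-1/2}>\pi^{-1}$ on that range. Combining the two regimes yields the claim with an absolute implied constant (and a careful bookkeeping of constants recovers the value $(\pi\sqrt 2)^{-1}$ indicated in the main text).
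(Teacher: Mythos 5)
Your proof is correct and follows essentially the same route as the paper: for $\theta=1$ both use the sub-probability bound $\int_0^1 G_t(x,y)\,\d y\le 1$, and for $\theta=2$ both expand in the sine eigenbasis, drop $\sin^2\le1$, integrate termwise in $t$, and reduce to Lemma \ref{lem:A1}. The only difference is that you explicitly handle the regime $\beta<\pi^2$ where the hypothesis $w\ge v$ of Lemma \ref{lem:A1} fails, a detail the paper silently omits; this is a correct and welcome bit of care rather than a different method.
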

\begin{proof}
	The case $\theta=1$ follows simply
	because $\int_0^1G_t(x\,,y)\,\d y \le 1$.
	
	Also, the fact that $\int_0^1 [G_t(x\,,y)]^2\,\d y\le \sum_{n=1}^\infty\exp(-n^2\pi^2 t)$
	implies that
	\[
		\int_0^\infty \d t\int_0^1\d y \left[\e^{-\beta t}G_t(x\,,y)\right]^2
		\le \sum_{n=1}^\infty \int_0^\infty \e^{-2\beta t-n^2\pi^2 t}\d t
		\le  \frac12\sum_{n=1}^\infty\left( \beta + \frac{n^2\pi^2}{2}\right)^{-1}.
	\]
	The result follows from Lemma \ref{lem:A1}.
\end{proof}

\begin{lemma}\label{lem:A3}
	If $\theta\in\{1\,,2\}$, then
	\begin{equation*}
		\int_0^\infty \d t
		\int_0^1\d y\,\left| G_t(x\,,y) - G_t(x',y) \right|^\theta
		\apprle\Psi_\theta(|x-x'|),
	\end{equation*}
	uniformly for all $x,x'\in[0\,,1]$, where
	$\Psi_1(z):= z\log(\e\vee z^{-1})$ and $\Psi_2(z):=z$ for all $z>0$.
\end{lemma}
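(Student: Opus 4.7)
The plan is to handle the two cases separately, exploiting the two representations \eqref{G} and \eqref{G:phi} of the Dirichlet Green's function.

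For $\theta=2$, the quickest route is via Parseval's identity. Starting from the eigenfunction expansion \eqref{G},
\[
	G_t(x,y)-G_t(x',y) = 2\sum_{n=1}^\infty [\sin(n\pi x)-\sin(n\pi x')]\sin(n\pi y)\,\e^{-n^2\pi^2 t/2}.
\]
Since $\{\sqrt{2}\sin(n\pi\cdot)\}_{n\ge 1}$ is an orthonormal basis of $L^2[0\,,1]$, Parseval gives
\[
	\int_0^1 |G_t(x,y)-G_t(x',y)|^2\,\d y = 2\sum_n [\sin(n\pi x)-\sin(n\pi x')]^2\,\e^{-n^2\pi^2 t}.
\]
Integrating in $t$ using $\int_0^\infty \e^{-n^2\pi^2 t}\,\d t = 1/(n^2\pi^2)$ reduces the double integral to $(2/\pi^2)\sum_n n^{-2}[\sin(n\pi x)-\sin(n\pi x')]^2$. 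With $h:=|x-x'|$ and the inequality $|\sin(n\pi x)-\sin(n\pi x')|\le\min(2\,,n\pi h)$, I would split the sum at $n\sim 1/h$ to obtain a bound of order $h$, which is $\Psi_2(h)$.

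For $\theta=1$, I plan to combine two pointwise-in-$t$ bounds. The trivial bound $\int_0^1 |G_t(x,y)-G_t(x',y)|\,\d y \le 2$ comes from $\int_0^1 G_t(x,y)\,\d y\le 1$. For a finer bound I would use the image representation \eqref{G:phi}, apply the fundamental theorem of calculus $|G_t(x,y)-G_t(x',y)|\le\int_{x'}^x|\partial_z G_t(z,y)|\,\d z$, and establish
\[
	\sup_{z\in[0,1]}\int_0^1 |\partial_z G_t(z,y)|\,\d y \apprle t^{-1/2}
\]
via $\partial_z G_t(z,y) = -\varphi_t'(y-z)-\varphi_t'(y+z)$, the standard one-dimensional Gaussian identity $\int_\R|\partial_z p(t,w)|\,\d w = \sqrt{2/(\pi t)}$, and the rapid decay of the distant image contributions. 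This yields $\int_0^1|G_t(x,y)-G_t(x',y)|\,\d y\apprle h/\sqrt t$, so taking the minimum with the trivial bound gives $\apprle \min(1\,,h/\sqrt t)$, whence $\int_0^1 \d t\int_0^1\,\d y\,|G_t(x,y)-G_t(x',y)| \apprle h^2 + h\int_{h^2}^1 t^{-1/2}\,\d t \apprle h$. For $t\ge 1$, the spectral gap in \eqref{G} yields $|G_t(x,y)-G_t(x',y)|\apprle h\,\e^{-\pi^2 t/2}$, contributing another $\apprle h$. Altogether the bound is $\apprle h$, which is dominated by $h\log(\e\vee h^{-1}) = \Psi_1(h)$.

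The main technical obstacle will be the uniform gradient bound $\sup_z\int_0^1|\partial_z G_t(z,y)|\,\d y\apprle t^{-1/2}$, especially near the boundary $z\in\{0\,,1\}$, where the image sum in $\varphi_t$ interacts nontrivially with the Dirichlet condition. The $2$-periodic structure of $\varphi_t$ reduces the estimate to the single-Gaussian identity after absorbing the exponentially-decaying contributions from distant image copies, and a similar argument handles the spectral-gap estimate used at large $t$.
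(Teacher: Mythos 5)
For $\theta=2$ your argument matches the paper's: sine-Fourier expansion of $G_t$, Parseval, the estimate $|\sin(n\pi x)-\sin(n\pi x')|\apprle\min(1\,,n|x-x'|)$, and a split of the resulting series at $n\asymp 1/|x-x'|$. For $\theta=1$, however, you take a genuinely different route and in fact prove a strictly stronger bound. The paper pushes the absolute value inside the Fourier sum before integrating, discarding all cancellation between terms; this yields $\sum_n\min(1\,,n|x-x'|)/n^2$, and splitting that series is exactly what produces the logarithm in $\Psi_1$. Your image-charge argument keeps $G_t(x\,,y)-G_t(x'\,,y)$ intact and bounds $\int_0^1|G_t(x\,,y)-G_t(x'\,,y)|\,\d y$ pointwise in $t$ by $\min(2\,,C|x-x'|\,t^{-1/2})$, via the gradient estimate $\sup_z\int_0^1|\partial_z G_t(z\,,y)|\,\d y\apprle t^{-1/2}$. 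That estimate does hold uniformly up to the boundary---the issue you correctly flag as the main obstacle---because the image sum in \eqref{G:phi} is periodic in the translation variable, so integrating $|\partial_z\varphi_t|$ over one period folds onto the full-line identity $\int_\R|\partial_w p(t\,,w)|\,\d w=\sqrt{2/(\pi t)}$. (One small caveat: as printed, \eqref{G:phi} has $(x-n)$ where $(x-2n)$ is meant; with the corrected $2$-periodic $\varphi_t$ the periodicity argument closes exactly, and you already implicitly use the $2$-periodicity.) Combining this with the spectral-gap decay for $t\ge1$ gives a total of $O(|x-x'|)$, with no logarithm. So the factor $\log(\e\vee|x-x'|^{-1})$ in $\Psi_1$ is an artifact of the paper's termwise triangle inequality rather than a feature of the left-hand side; your bound dominates $\Psi_1(|x-x'|)$, which certainly proves the lemma. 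Nothing downstream is affected, since in \eqref{II:T1} the paper discards the logarithm anyway through $|a|\log_+(1/a)\apprle|a|^{1/2}$, and your sharper $|a|$ is absorbed just as easily.
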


\begin{proof}
	First, let us consider the case that $\theta=2$ and $|x-x'|<\e^{-1}$.
	We may apply \eqref{G} to find that
	\begin{equation*}\begin{split}
		\int_0^1 \left[ G_t(x\,,y)-G_t(x',y)\right]^2\d y &=
			2\sum_{n=1}^\infty \left[ \sin(n\pi x)-\sin(n\pi x')\right]^2
			\e^{-n^2\pi^2 t}\\
		&\le 2\pi^2\sum_{n=1}^\infty \min\left(1\,,n^2 |x-x'|^2\right) \e^{-n^2\pi^2 t}.
	\end{split}\end{equation*}
	Therefore,
	\begin{equation*}
		\int_0^\infty\d t\int_0^1\d y\ \left[ G_t(x\,,y)-G_t(x',y)\right]^2
		\apprle\sum_{n=1}^\infty \min\left(\frac{1}{n^2}\,,|x-x'|^2\right).
	\end{equation*}
	Since $\min(r^{-2}\,,R)\le 2(r^2+R^{-1})^{-1}$ for every $r,R>0$, Lemma \ref{lem:A1}
	completes the proof in the case that $\theta=2$.
	
	If $\theta=1$, then we likewise have
	\begin{equation*}\begin{split}
		\int_0^1 | G_t(x\,,y) - G_t(x',y)|\,\d y &\apprle\sum_{n=1}^\infty
			\left| \sin(n\pi x)-\sin(n\pi x')\right|\e^{-n^2\pi^2 t/2}\\
		&\apprle\sum_{n=1}^\infty \min\left( 1\,,n|x'-x|\right)\e^{-n^2\pi^2 t/2}.
	\end{split}\end{equation*}
	Therefore,
	\begin{equation*}\begin{split}
		\int_0^\infty \d t\int_0^1\d y\  | G_t(x\,,y) - G_t(x',y)|
			&\apprle\sum_{n=1}^\infty \frac{\min\left( 1\,,n|x'-x|\right)}{n^2}\\
		&\apprle|x'-x|\sum_{n=1}^{\lfloor 1/|x'-x|\rfloor} n^{-1} +
			\sum_{n=1+\lfloor 1/|x'-x|\rfloor}^\infty n^{-2},
	\end{split}\end{equation*}
	and the result follows if $|x'-x|\le\e^{-1}$. If $|x'-x|>\e^{-1}$,
	then the very same estimates show that
	\begin{equation*}
		\int_0^\infty\d t\int_0^1\d y\
		|G_t(x\,,y) - G_t(x',y)|\apprle\sum_{n=1}^\infty n^{-2}\apprle
		|x'-x|,
	\end{equation*}
	and this completes the proof.
\end{proof}

\begin{lemma}\label{lem:A5}
	Uniformly for every $\varepsilon >0$ and $\theta\in\{1\,,2\}$,
	\begin{equation*}
		\sup_{t>0}\sup_{x\in[0,1]}
		\int_0^t\d s\int_0^1\d y\ |G_{t+\varepsilon-s}(x\,,y) - G_{t-s}(x\,,y)|^\theta
		\apprle\sqrt{\varepsilon}.
	\end{equation*}
\end{lemma}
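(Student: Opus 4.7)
\textbf{Proof plan for Lemma \ref{lem:A5}.} The plan is to substitute $r = t-s$ so that the integrand becomes independent of $t$, extend the time integral to $(0,\infty)$ as an upper bound, then exploit the Fourier representation \eqref{G} term by term. After the change of variables and extension, it suffices to show that
\[
    J_\theta(\varepsilon) := \sup_{x\in[0,1]} \int_0^\infty \d r \int_0^1 \d y\,
    \left| G_{r+\varepsilon}(x,y) - G_r(x,y) \right|^\theta \apprle \sqrt{\varepsilon}
    \qquad\text{for } \theta\in\{1,2\}.
\]
From \eqref{G},
\[
    G_{r+\varepsilon}(x,y) - G_r(x,y)
    = -2 \sum_{n=1}^\infty \sin(n\pi x)\sin(n\pi y)\,\e^{-n^2\pi^2 r/2}
    \left(1 - \e^{-n^2\pi^2\varepsilon/2}\right).
\]

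For $\theta = 2$, I would apply the $\L^2(0,1)$-orthogonality of the system $\{\sqrt{2}\sin(n\pi\cdot)\}_{n\ge 1}$ to obtain
\[
    \int_0^1 \left| G_{r+\varepsilon}(x,y) - G_r(x,y) \right|^2 \d y
    = 2\sum_{n=1}^\infty \sin^2(n\pi x)\,\e^{-n^2\pi^2 r}
    \left(1 - \e^{-n^2\pi^2\varepsilon/2}\right)^2,
\]
and then integrate in $r$ using $\int_0^\infty \e^{-n^2\pi^2 r}\d r = (n^2\pi^2)^{-1}$ to reduce matters to estimating $\sum_{n\ge 1} n^{-2}(1-\e^{-n^2\pi^2\varepsilon/2})^2$. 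For $\theta = 1$, I would use the triangle inequality together with $|\sin(n\pi x)\sin(n\pi y)|\le 1$ and $\int_0^\infty\e^{-n^2\pi^2 r/2}\d r = 2/(n^2\pi^2)$, which reduces matters to $\sum_{n\ge 1} n^{-2}(1-\e^{-n^2\pi^2\varepsilon/2})$.

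In both cases the remaining step is the elementary inequality $1-\e^{-a}\le\min(1,a)$, which gives bounds $\min(1, n^4\varepsilon^2)$ for $\theta=2$ and $\min(1, n^2\varepsilon)$ for $\theta=1$. Splitting the sum at the crossover $n_0 := \lceil 1/(\pi\sqrt{\varepsilon/2})\rceil \asymp 1/\sqrt{\varepsilon}$, one finds in the regime $n\le n_0$ that the terms are bounded by $n^2\varepsilon^2$ (resp.~$\varepsilon$), so their sum is $\asymp n_0^3\varepsilon^2\asymp\sqrt{\varepsilon}$ (resp.~$n_0\varepsilon\asymp\sqrt{\varepsilon}$); in the regime $n>n_0$ the bound $1/n^2$ (uniformly in $\varepsilon$) yields a tail $\asymp 1/n_0\asymp\sqrt{\varepsilon}$. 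Adding these gives the desired $O(\sqrt{\varepsilon})$ bound in both cases.

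The main (and really only) subtlety is that the triangle-inequality step used for $\theta = 1$ discards the cancellation in $y$, so one might worry about losing precision; however, the $\sqrt\varepsilon$ scaling is nonetheless sharp because the estimate is dominated by the high-frequency modes $n\gtrsim 1/\sqrt\varepsilon$, where cancellation is not the source of decay. One could alternatively phrase the high-frequency tail via Lemma \ref{lem:A1} applied with $v=\varepsilon$, $w=1$ (after a minor manipulation of $1-\e^{-a}\le a/(1+a)$), but the direct split above is more transparent and suffices.
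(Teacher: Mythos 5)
Your proposal is correct and follows essentially the same route as the paper's proof: both expand via the Fourier representation \eqref{G}, use $|\sin|\le 1$ (for $\theta=1$) or the $L^2$-orthogonality that the paper invokes implicitly (for $\theta=2$), integrate the exponential in time, bound $1-\e^{-a}$ by $\min(1,a)$, and estimate the resulting series $\sum_n \min(n^{-2},\varepsilon)$ or $\sum_n\min(n^{-2},n^2\varepsilon^2)$ by splitting at $n\asymp\varepsilon^{-1/2}$. The only cosmetic difference is that you make the substitution $r=t-s$ and the tail-splitting explicit, where the paper refers back to Lemma \ref{lem:A3} for the final estimate.
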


\begin{proof}
	We first consider the case that $\theta=1$.
	We can bound $|\sin(\,\cdots)|$ from above by $1$ in \eqref{G}
	in order to find that the left-hand side is at most
	\begin{equation*}
		2\sum_{n=1}^\infty \left[
		1-\e^{-n^2\pi^2 \varepsilon/2}\right]\int_0^t \e^{-n^2\pi^2 (t-s)/2}\,\d s
		\apprle\sum_{n=1}^\infty \min\left(n^{-2}\,, \varepsilon\right),
	\end{equation*}
	since $1-\e^{-\theta}\le\min(1\,,\theta)$ for every $\theta\ge0$.
	If $\varepsilon < 1$, then a direct calculation as in the proof
	of Lemma \ref{lem:A3} shows that the series is bounded by
	$\text{const}\cdot\sqrt{\varepsilon}$.
	If $\varepsilon \ge 1$, then the series
	is a constant $c$, which is $\leq c \sqrt{\varepsilon}$. The result for $\theta = 1$ follows.
	
	For $\theta = 2$, by \eqref{G}, the left-hand side is equal to
	\begin{align*}
		&4 \int_0^t  \sin^2(n\pi x) \left(\e^{-n^2 \pi^2
			(t+\varepsilon - s)/2}  - \e^{-n^2 \pi^2 (t-s)/2}\right)^2 \d s\\
		&\hskip2in \le 4 \sum_{n=1}^\infty \left(1 - \e^{-n^2 \pi^2
			\varepsilon/2}\right)^2 \int_0^t \e^{-n^2 \pi^2 (t-s)} \,\d s \\
		&\hskip2in \apprle \sum_{n=1}^\infty \min(n^{-2},\, n^2 \varepsilon^2).
	\end{align*}
	Then we proceed as we did when $\theta=1$.
\end{proof}

\begin{lemma}\label{lem:A6}
	For all $\varepsilon>0$ and $\theta\in\{1\,,2\}$,
	\begin{equation*}
		\sup_{t\ge 0}\sup_{x\in[0,1]}
		\int_t^{t+\varepsilon}\d s\int_0^1\d y\
		\left| G_{t+\varepsilon-s}(x\,,y)\right|^\theta
		\apprle \varepsilon^{1/\theta}.
	\end{equation*}
\end{lemma}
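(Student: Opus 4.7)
The plan is to reduce both cases to a direct computation by a change of variables and then invoke the bounds on $G$ that have already been established in the paper.

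First, I would perform the substitution $r := t+\varepsilon - s$, $\d r = -\d s$, which converts the integral into
\[
    \int_t^{t+\varepsilon}\!\d s \int_0^1\!\d y\;|G_{t+\varepsilon-s}(x\,,y)|^\theta
    = \int_0^{\varepsilon}\!\d r \int_0^1\!\d y\;|G_r(x\,,y)|^\theta.
\]
This has the immediate benefit of making the left-hand side independent of $t$, so the supremum over $t \ge 0$ is trivial, and it remains only to bound the right-hand side uniformly in $x \in [0\,,1]$.

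For $\theta = 1$, I would invoke \eqref{int:G}, which gives $\int_0^1 G_r(x\,,y)\,\d y < 1$ for every $r>0$ and $x\in[0\,,1]$. Integrating this over $r \in [0\,,\varepsilon]$ yields the bound $\varepsilon$, which is exactly $\varepsilon^{1/\theta}$.

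For $\theta = 2$, I would use the Gaussian majorization \eqref{boundG}, $0 \le G_r(x\,,y) \le p(r\,, x-y)$, where $p(r\,,\cdot)$ is the $N(0\,,r)$ density. This gives
\[
    \int_0^1 [G_r(x\,,y)]^2\,\d y
    \le \int_\R [p(r\,, x-y)]^2\,\d y
    = \frac{1}{2\sqrt{\pi r}},
\]
uniformly in $x\in[0\,,1]$. Integrating $r \mapsto r^{-1/2}$ over $[0\,,\varepsilon]$ produces a constant multiple of $\sqrt{\varepsilon} = \varepsilon^{1/\theta}$, as required. There is no serious obstacle in this argument; the only place some care is needed is the Gaussian $L^2$ computation in the $\theta=2$ case, and since the bound is integrable in $r$ near zero, the result follows immediately.
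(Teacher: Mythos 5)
Your proof is correct, and for $\theta=2$ it follows a genuinely different route from the paper. Both proofs begin (implicitly or explicitly) with the same change of variables reducing the integral to $\int_0^\varepsilon \d r \int_0^1 \d y\, |G_r(x,y)|^\theta$. For $\theta=1$ the two arguments are essentially identical: the paper bounds $\int_0^1 G_r(x,y)\,\d y\le 1$ via \eqref{boundG}, you use \eqref{int:G}, and in either case integrating in $r$ gives $\varepsilon$. For $\theta=2$, however, the paper works with the eigenfunction expansion \eqref{G}, writes $\int_0^1 [G_r(x,y)]^2\,\d y = 4\sum_n \sin^2(n\pi x)\,\e^{-n^2\pi^2 r}$, integrates in $r$, and then estimates $\sum_n \min(n^{-2},\varepsilon)\apprle \sqrt{\varepsilon}$ as in Lemma \ref{lem:A3}; you instead invoke the Gaussian majorization \eqref{boundG} to get $\int_0^1 [G_r(x,y)]^2\,\d y \le \int_\R [p(r,x-y)]^2\,\d y = (2\sqrt{\pi r})^{-1}$, which integrates directly to a multiple of $\sqrt{\varepsilon}$. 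Your computation is correct (e.g.\ $\int_\R p(r,z)^2\,\d z = (2\pi r)^{-1}\int_\R \e^{-z^2/r}\,\d z = (2\sqrt{\pi r})^{-1}$). The Gaussian route is a bit shorter and more elementary, bypassing the series manipulation; the paper's series route has the stylistic advantage of reusing the same machinery (Lemma \ref{lem:A1} and the $\sum_n\min(n^{-2},\cdot)$ device) that appears in the neighboring Lemmas \ref{lem:A2}, \ref{lem:A3}, and \ref{lem:A5}.
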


\begin{proof}
	For $\theta = 1$, we appeal to \eqref{G} and \eqref{boundG}, to see that
	$ \int_0^1 G_{t+\varepsilon -s}(x\,,y) \,\d y\le 1,$
	and this proves the desired inequality.
	
	For $\theta = 2$, we appeal to \eqref{G} using that
	\[
		\int_0^1 [G_t(x\,,y)]^2\, \d y =
		4 \sum_{n=1}^\infty \sin^2(n\pi x) \exp(-n^2 \pi^2 t),
	\]
	to see that
	\begin{align*}
		\sup_{t>0}\sup_{x\in[0,1]}\int_t^{t+\varepsilon}\d s\int_0^1\d y\
			\vert G_{t+\varepsilon-s}(x\,,y) \vert^2
			&\apprle \sum_{n=1}^\infty  \int_0^\varepsilon\e^{-n^2\pi^2 s}\,\d s\\
		&\apprle\sum_{n=1}^\infty\left(
			\frac{1-\e^{-n^2\pi^2\varepsilon/2}}{n^2} \right),
	\end{align*}
	which is at most a constant multiple of
	$\sum_{n=1}^\infty\min(n^{-2},\varepsilon )\apprle \varepsilon^{1/2}$.
	This proves the result in the case that $\theta=2$.
\end{proof}

\begin{small}

\vskip1cm
\noindent\textbf{Robert C. Dalang.}
Institut de Math\'ematiques, Ecole Polytechnique
F\'ed\'erale de Lausanne, Station 8, CH-1015 Lausanne,
Switzerland.\\
\texttt{robert.dalang@epfl.ch}\\

\noindent\textbf{Davar Khoshnevisan.}
Department of Mathematics, The
University of Utah, 155 S. 1400 E. Salt Lake City,
UT 84112-0090, USA.\\
\texttt{davar@math.utah.edu}\\

\noindent\textbf{Tusheng Zhang.}
School of Mathematics, University of Manchester,
Oxford Road, Manchester M13 9PL, England, U.K.\\
\noindent\texttt{Tusheng.Zhang@manchester.ac.uk}
\end{small}
\end{document}